\newcommand\LA{\mathfrak{g}}
\newcommand\PLA{\mathfrak{pg}}
\newcommand\Aut{\operatorname{Aut}}
\theoremstyle{plain}
\newtheorem{thm}{Theorem}[section]
\newtheorem{theorem}{Theorem}[section]
\newtheorem*{thm*}{Theorem}
\newtheorem{corollary}[thm]{Corollary}
\newtheorem*{cor*}{Corollary}
\newtheorem*{conj*}{Conjecture}
\newtheorem*{lemma*}{Lemma}
\newtheorem{lemma}[thm]{Lemma}
\newtheorem*{prop*}{Proposition}
\newtheorem{proposition}[thm]{Proposition}
\theoremstyle{definition}
\newtheorem{rems}[thm]{Remarks}
\newtheorem{remark}[thm]{Remark}
\newtheorem*{defn*}{Definition}
\newtheorem{definition}{Definition}
\newtheorem*{rems*}{Remarks}
\newtheorem*{proof*}{Proof}
\newtheorem{prel*}{Preliminaries}
\newtheorem{examples*}{Examples}
\newcommand\ha{\frac12}
\newcommand{\smA}{\mathcal{S}}
\newcommand{\cK}{{\mathcal K}}
\newcommand{\cR}{{\mathcal R}}
\newcommand{\Index}{{\rm Index}}
\newcommand{\HH}{\operatorname{HH}}
\newcommand{\End}{{\rm{End}}}
\newcommand\bfA{{\bf A}}
\newcommand\bfB{{\bf B}}
\newcommand\bfF{{\bf F}}
\newcommand\bfH{{\bf H}}
\newcommand\bfW{{\bf W}}
\newcommand\bfPhi{{\bf \Phi}}
\newcommand\bfPsi{{\bf \Psi}}
\newcommand\Zeta{\Upsilon}
\newcommand{\PGL}{\operatorname{PG}}
\newcommand{\wX}{{\widetilde X}}
\newcommand{\wF}{{\widetilde {\bfF}}}
\newcommand{\U}{{\sf U}}
\newcommand{\wGamma}{{\widetilde \Gamma}}
\def\wh#1{\widehat{#1}}
\newcommand\Can{\operatorname{Can}}
\newcommand{\bCI}{{\bf \mathcal{C}}^{\infty}}
\newcommand\coF{{}^{\mathcal{C}}\kern-2pt\Lambda}
\newcommand\cFTs{{}^{\Phi}\overline{T}\kern-1pt{}^*}
\newcommand\Out{\operatorname{Out}}
\newcommand\Con{\operatorname{Con}}
\newcommand\WF{\operatorname{WF}}
\newcommand\Tr{\operatorname{Tr}}
\newcommand\rTr{\operatorname{Tr_{R}}}
\newcommand\GL{\operatorname{G}}
\newcommand\cB{\mathcal{B}}
\newcommand\cF{\mathcal{F}}
\newcommand\cG{\mathcal{G}}
\newcommand\cL{\mathcal{L}}
\newcommand{\cP}{{\mathcal P}}
\newcommand\fM{\mathfrak{M}}
\newcommand\CC{\mathbb C}
\newcommand\RR{\mathbb R}
\newcommand\ZZ{\mathbb Z}
\newcommand\bbC{\mathbb C}
\newcommand\bbE{\mathbb E}
\newcommand\bbN{\mathbb N}
\newcommand\bbQ{\mathbb Q}
\newcommand\bbR{\mathbb R}
\newcommand\bbT{\mathbb T}
\newcommand\bbZ{\mathbb Z}
\newcommand\cC{\mathcal C}
\newcommand\cM{\mathcal M}
\newcommand\CIc{{\mathcal{C}}^{\infty}_c}
\newcommand\CI{{\mathcal{C}}^{\infty}}
\newcommand\CmI{{\mathcal{C}}^{-\infty}}
\newcommand\Diag{\operatorname{Diag}}
\newcommand\CF{C_{\Phi}}
\newcommand\cFNs{{}^{\Phi}\overline N\kern-1pt{}^*}
\newcommand\rank{\operatorname{rank}}
\newcommand\Hom{\operatorname{Hom}}
\newcommand\Id{\operatorname{Id}}
\newcommand\UU{\operatorname{U}}
\newcommand\PU{\operatorname{PU}}
\newcommand\ci{${\mathcal{C}}^\infty$}
\newcommand\pa{\partial}
\newcommand\phg{\operatorname{phg}}
\newcommand\supp{\operatorname{supp}}
\newcommand\Mand{\text{ and }}
\newcommand\Mie{\text{ i.e. }}
\newcommand\Mif{\text{ if }}
\newcommand\Mon{\text{ on }}
\newcommand\Mst{\text{ s.t. }}
\newcommand\Mwhere{\text{ where }}
\newcommand\Mwith{\text{ with }}
\begin{document}

\title[Fourier integral operators]
{Geometry of Pseudodifferential algebra bundles\\
and Fourier Integral Operators}

\author{Varghese Mathai}
\address{Department of Pure Mathematics, University of Adelaide,
Adelaide 5005, Australia}
\email{mathai.varghese@adelaide.edu.au}
\author{Richard B. Melrose}
\address{Department of Mathematics,
Massachusetts Institute of Technology,
Cambridge, Mass 02139, U.S.A.}
\email{rbm@math.mit.edu}

\dedicatory{Dedicated to Isadore M. Singer}

\begin{abstract} We study the geometry and topology of (filtered)
  algebra-bundles $\bfPsi^\bbZ$ over a smooth manifold $X$ with typical
  fibre $\Psi^\bbZ(Z; V)$, the algebra of classical pseudodifferential
  operators acting on smooth sections of a vector bundle $V$ over the
  compact manifold $Z$ and of integral order.  First a theorem of
  Duistermaat and Singer is generalized to the assertion that the group of
  projective invertible Fourier integral operators $\PGL(\cF^\bbC(Z; V))$,
  is precisely the automorphism group, of the filtered algebra of
  pseudodifferential operators. We replace some of the arguments in their
  paper by microlocal ones, thereby removing the topological assumption. We
  define a natural class of connections and B-fields on the principal
  bundle to which $\bfPsi^\bbZ$ is associated and obtain a de Rham
  representative of the Dixmier-Douady class, in terms of the outer
  derivation on the Lie algebra and the residue trace of Guillemin and
  Wodzicki; the resulting formula only depends on the formal symbol algebra
  $\bfPsi^\bbZ/\bfPsi^{-\infty}.$ Examples of pseudodifferential algebra
  bundles are given that are not associated to a finite dimensional fibre bundle over $X.$
\end{abstract}

\thanks{{\em Acknowledgments.} The first author was supported by the Australian Research Council
Discovery Project grant DP130103924.  The second author acknowledges the support of
  the National Science Foundation under grant DMS1005944.}
  
\keywords{Pseudodifferential algebra bundles, Fourier integral operators,
automorphisms of pseudodifferential operators, derivations of
pseudodifferential operators, twisted (fibre) cosphere bundles, regularised trace, residue trace, 
central extension, gerbes, Dixmier-Douady invariant}

\subjclass[2010]{Primary 58J40, Secondary 53C08, 53D22}
\maketitle
\tableofcontents

\section*{Introduction} In this paper, we study the geometry and topology
of (filtered) algebra-bundles $\bfPsi^\bbZ$ over a smooth manifold $X$ with
typical fibre $\Psi^\bbZ(Z; V)$, the algebra of classical
pseudodifferential operators acting on smooth sections of a vector bundle
$V$ over the compact manifold $Z.$ Since we do not assume that there is an
underlying geometric bundle, the transition functions of $\bfPsi^\bbZ$ are
general order-preserving automorphisms of the typical fibre
$\Psi^\bbZ(Z;V).$ By an extension of a theorem of Duistermaat and Singer
\cite{DS} (see Theorem 2.1), every such automorphism is given by
conjugation by an invertible Fourier integral operator of possibly complex
order.  This means in particular that $\Psi^\bbZ(Z; V)$ has many nontrivial
outer automorphisms.  The principal bundle, $\bfF,$ associated to
$\bfPsi^\bbZ$ has fibre the Fr\'echet Lie group of projective invertible
Fourier integral operators, $\PGL(\cF^\bbC(Z;V))=\GL(\cF^\bbC(Z;V))/\bbC^*$
the quotient by the centre, $\bbC^*\Id,$ of the group of invertible Fourier
integral operators of complex order. We show that the structure group can
always be reduced to $\PGL(\cF^0(Z;V)),$ the subgroup of operators of order
$0.$ Thus the structure group arises directly through the central
extension,
\begin{equation}
\bbC^\star\longrightarrow \GL(\cF^0(Z;V))\longrightarrow \PGL(\cF^0(Z;V)).
\label{centralext}\end{equation}

A class of connections on the principal bundle in equation \eqref{centralext} is constructed from the
regularized trace of the Maurer-Cartan 1-form on $\GL(\cF^0(Z;V)).$ The
curvature of such a connection is then computed explicitly via the
trace-defect formula in terms of the residue trace of Guillemin \cite{G85,G93} and
Wodzicki \cite{Wodzicki} giving a differential 2-form on $ \PGL(\cF^0(Z;V)).$ The central
extension \eqref{centralext} is then fixed up to isomorphism by an
additional 1-form, cf. Lemma~\ref{lemma:1-form}.

The obstruction to lifting $\bfF$ to a principal $\GL(\cF^0(Z;V))$-bundle,
and hence to realizing $\bfPsi^{\bbZ}$ as a bundle of operators, is the
Dixmier-Douady invariant (see Definition \ref{def-Cech-DD}). This can be
realized in terms of the bundle gerbe associated to $\bfF$ in the sense of
Murray, as further developed by Murray and Stevenson in \cite{MS}. The
central extension \eqref{centralext} leads to a line bundle over the fibre
product $\bfF^{[2]}$ of the principal bundle with itself and the
Dixmier-Douady invariant is the obstruction to this being obtained as the
difference of the two pull-backs of a line bundle over $\bfF.$

The Chern class of the line bundle over $\bfF^{[2]}$ can be split in terms
of a 2-form, a B-field (or curving), on $\bfF.$ The choice of a connection
on $\bfF$ and a Higgs field that is essentially a section of the adjoint
bundle \footnote{{{A monopole in gauge theory has two types of fields,
      namely a connection and a Higgs field, cf. \cite{JaffeTaubes} which
      is where the terminology originates from. In the context of bundle
      gerbes, it was first introduced in \cite{MS}.}}}, lifting the
exterior derivation on the Lie algebra, enables us to construct an explicit
B-field. The differential of this is a basic differential 3-form
representing the image of the Dixmier-Douady class in de Rham cohomology of
the base, $X.$ 

Recall that Dixmier and Douady \cite{DD} introduced a degree 3 integer
valued cohomological invariant of a $\cK$-algebra bundle (or Azumaya bundle) over a paracompact
space, and moreover proved that every degree 3 integer valued cohomology
class can be realised as the invariant of a $\cK$-algebra bundle over the
paracompact space.  Here $\cK$ denotes the algebra of compact operators on
a separable Hilbert space.  In the case of manifolds, one can consider
algebra bundles of smoothing operators on a compact manifold.  Upon taking
the fibrewise operator norm completion, this gives rise to algebra bundles
of the sort studied by Dixmier-Douady. In particular, we can consider
the Dixmier-Douady
class of the $\cK$-algebra bundle bundle obtained by completing, in the operator norm,
the subbundle $\bfPsi^{-\infty}\subset\bfPsi^\bbZ$ consisting of the
smoothing operators. It is straightforward to see that the Dixmier-Douady class of 
the principal bundle $\bfF$ is equal to the Dixmier-Douady class of the 
associated $\cK$-algebra bundle. Due to the mismatch between the 
topologies of the automorphism group $\Aut(\cK)=\PU$ and $\GL(\cF^0(Z;V))$,
not all degree 3 cohomology classes are realised in our context.  For
instance, it does not seem likely that the context considered by
Freed-Hopkins-Teleman \cite{FHT} can be included in our theory. On the
other hand, the case of torsion Dixmier-Douady invariant was studied in
\cite{MMS1} and the case of non-torsion, decomposable Dixmier-Douady
invariant was studied in \cite{MMS4} and here we study the most general
case that can be realized in terms of pseudodifferential operators as
opposed to abstract operators.

Note that $\bfPsi^\bbZ$, is {\em not} in general associated to a finite
dimensional fibre bundle over $X.$ A continuous section of $\bfPsi^\bbZ$
over $X$ is called a {\em projective family} of pseudodifferential
operators on $Z$ although in view of the conjugation by Fourier integral
operators no meaning can be assigned to the notion of a projective family
of differential operators in this context.

Paycha and Rosenberg (cf. \cite{Paycha-Rosenberg,Paycha}) and others have
considered what amounts to a special case of this general notion of a
bundle of pseudodifferential operators in which the structure group is
required to be the group of invertible pseudodifferential operators. So in
this case the Dixmier-Douady invariant is trivial and there is a bundle of
Fr\'echet spaces on which the pseudodifferential operators act fibrewise.

In outline the content of this paper is as follows. In section
\ref{sect:der} the Lie algebra of derivations of the filtered algebra of
all $\Psi^\bbZ(Z; V)$ is studied, and the Hochschild cohomology of 
$\Psi^\bbZ(Z; V)$ is computed. From this, in section \ref{FormDer} the
structure of the Lie algebra of derivations of the filtered $\star$-algebra
of formal pseudodifferential operators $\Psi^\bbZ(Z; V)/ \Psi^{-\infty}(Z;
V)$ is deduced; in both cases, there are non-trivial outer derivations but
in the formal case the algebra is generally larger. Section~\ref{sect:aut}
is devoted to the study of the Fr\'echet Lie groups of automorphisms of
both the filtered $\star$-algebra of formal pseudodifferential operators
and the filtered algebra of all pseudodifferential operators; again there
are non-trivial outer automorphisms. In Section~\ref{Sect:GFIO} the
topologies on the groups of invertible pseudodifferential and Fourier
integral operator are examined with some additional constructions relegated
to an appendix. The main object of study here, the notion of a filtered
algebra bundle of pseudodifferential over a smooth manifold $X$ is
introduced in Section~\ref{sect:pdobundle}, as well as the existence of
twisted (fibre) cosphere bundles that are not (fibre) cosphere bundles of
fibre bundles. In section \ref{CentralextFIO}, a connection on the central
extension \eqref{centralext} is described with curvature computed in terms
of the residue trace of Guillemin and Wodzicki.  Section~\ref{sect:FIODD}
contains the analysis of the principal bundle of trivializations of a
filtered algebra bundle of pseudodifferential operatorsand the image in deRham
cohomology of the Dixmier-Douady class is computed. Finally several
examples with non-torsion Dixmier-Douady class are given in
Section~\ref{sect:examples}. The appendices contain basic results on the
group of (projective) invertible Fourier integral operators and their
homotopy groups in low dimensions. We use these results to show that there
are infinitely many topologically distinct principal $ \PGL(\cF^0(\Sigma_g))$ bundles 
over $S^n,\, n\ge 2$ that are purely infinite dimensional, and
not arising from any fibre bundles over $S^n,\, n\ge 2$ with typical fibre
$\Sigma_g, \, g\ge 2$.  In Appendix \ref{secappCb}, we study the
construction of pseudodifferential algebra bundles over locally symmetric
spaces and relate them to factors of automorphy and relate their
Dixmier-Douady invariant to these.

We dedicate this paper to our friend and collaborator, Is Singer. We would
like to thank him for his important input in the initial discussions on
this paper and for our earlier work \cite{MMS1, MMS2, MMS3, MMS4}. The
first author thanks M. Murray and D. Stevenson for discussions concerning
their paper \cite{MS}.


\section{Derivations of pseudodifferential operators and Hochschild cohomology}\label{sect:der}


In this section the Lie algebra of derivations on the algebra of classical
pseudodifferential operators acting on sections of a complex vector bundle
$V$ over a compact manifold $Z$, denoted $\Psi^{\bbZ}(Z;V)$, is
characterized.  Equivalently, this computes the first Hochschild cohomology
group of $\Psi^{\bbZ}(Z;V)$. We calculate all the Hochschild cohomology
groups of $\Psi^{\bbZ}(Z;V)$ using results of Brylinski-Getzler \cite{BM}
as well as the H-unitality of $\Psi^{-\infty}(Z;V)$ as explained in
\cite{MN}. Derivations on its `formal' quotient
$\Psi^{\bbZ}(Z;V)/\Psi^{-\infty}(Z;V)$ are considered in the next section.

This analysis is closely parallel to the treatment,
recalled (and refined a little) below, of Duistermaat and Singer of the
group of automorphisms of $\Psi^{\bbZ}(Z;V),$ and in particular is an
infinitesimal version of it. As in that case, it is unnecessary to make any
\emph{a priori} assumption of continuity.  Thus a derivation is simply a
filtered linear map, so for some $k\in\bbZ,$
\begin{equation}
\begin{gathered}
D:\Psi^{p}(Z;V)\longrightarrow \Psi^{p+k}(Z;V),\ \forall\ p\in\bbZ\Mand\\
D(A\circ B)=D(A)\circ B+A\circ D(B).
\end{gathered}
\label{RW.57}\end{equation}
Commutation with an element of the algebra is an inner derivation.

\begin{proposition}\label{Der} If\, $\dim Z\ge2$ and $Z$ is connected, the
quotient of the Lie algebra of derivations of $\Psi^{\bbZ}(Z;V)$ by the
inner derivations is one-dimensional and is generated by $[\log(P),\cdot ],$
where $P$ is a positive pseudodifferential operator  of order one.
Equivalently, the Hochschild cohomology group $\HH^1(\Psi^{\bbZ}(Z;V)) \cong
\CC.$ For $Z=S^1,$ the first Hochschild cohomology group is two dimensional.
\end{proposition}

\begin{proof} 
The first step is to show that a derivation on the subalgebra of smoothing
operators is realized by an operator on $\CI(Z;V).$

\begin{lemma}\label{17.5.2011.1} If
 $D:\Psi^{-\infty}(Z;V)\longrightarrow \Psi^{-\infty}(Z;V)$ is a linear
 map acting as a derivation on smoothing operators,
\begin{equation}
D(A\circ B)=D(A)\circ B+A\circ D(B)\ \forall\ A,\ B\in \Psi^{-\infty}(Z;V),
\label{17.5.2011.2}\end{equation}
then there is a continuous linear operator $L:\CI(Z;V)\longrightarrow
\CI(Z;V)$ such that  
\begin{equation}
D(A)=[L,A]\ \forall\ A\in\Psi^{-\infty}(Z;V)
\label{17.5.2011.3}\end{equation}
and $L$ is determined up to adding a multiple of the identity.
\end{lemma}

\begin{proof} The smoothing operators are naturally identified with the
  smooth sections of the (kernel) bundle $K(V)=V\boxtimes
  (V'\otimes\Omega)$ of the two-point homomorphism bundle over the product
  with a density from the right factor. Thus each derivation may be realized
  as a linear map $D:\CI(Z^2;K(V))\longrightarrow \CI(Z^2;K(V))$ where we
  assume \eqref{17.5.2011.2} but not any continuity. Choose a global
  density, $0<\nu \in\CI(Z;\Omega ),$ and hence an isomorphism $K(V)\simeq
  V\boxtimes V'$ over $Z^2$ so that $D:\CI(Z^2;V\boxtimes
  V')\longrightarrow \CI(Z^2;V\boxtimes V')$ with \eqref{17.5.2011.2}
  holding for the product
\begin{equation}
A\circ B(z,z')=\int_ZA(z,z'')B(z'',z')\nu(z'')\Mon \CI(Z^2;V\boxtimes V').
\label{17.5.2011.4}\end{equation}

In addition choose a smooth fibre inner product $\langle ,\rangle$ on
$V,$ and hence a global inner product on $\CI(Z;V),$  
\begin{equation*}
(u,v)=\int _Z\langle u,v\rangle \nu,\ u,\ v\in\CI(Z;V).
\label{RW.58}\end{equation*}
Each section $w\in\CI(Z;V)$ determines a continuous linear
functional
\begin{equation}
I(w):\CI(X;V)\longrightarrow \bbC,\ I(w)v=\int_Z(v,w)\nu.
\label{17.5.2011.5}\end{equation}

Following the idea of M. Eidelheit \cite{Eidelheit1} we consider smoothing
operators of rank one. Any pair of sections, $u,$ $w\in\CI(Z;V)$ determines
such an operator
\begin{equation}
u\otimes I(w)\in\Psi^{-\infty}(Z;V)\Mwhere (u\otimes I(w))(v)=I(w)(v)\cdot u.
\label{17.5.2011.6}\end{equation}
Note that this is indeed a smoothing operator since its kernel is
$(u\boxtimes w^*)\nu$ where $w^*$ is the `dual' section of $V'$ given by the
pointwise inner product. The resulting map 
\begin{equation}
\CI(Z;V)\times\CI(Z;V)\longrightarrow \Psi^{-\infty}(Z;V)
\label{RW.59}\end{equation}
is linear in the first, but anti-linear in the second,
variable. Conversely, the range of this map consists of all smoothing
operators of rank one. Composition is given by pairing of the two central elements:
\begin{equation}
(u\otimes I(w))\circ(u'\otimes I(w'))=(u',w) u\otimes I(w').
\label{17.5.2011.8}\end{equation}

Now, consider the action of a derivation on these operators. Fix an element
$w\in\CI(Z;V)$ with norm one, i.e.\ $I(w)(w)=1.$ The action of $D$ defines two linear
maps 
\begin{equation}
\begin{gathered}
L,\ R:\CI(Z;V)\longrightarrow \CI(Z;V)\text{ by}\\
Lu=(D(u\otimes I(w)))(w),\ I(Rv)=I(w)\circ D(w\otimes I(v));
\end{gathered}
\label{RW.50}\end{equation}
where in the second case the composite of a smoothing operator and pairing
against a smooth section is necessarily given by pairing against a smooth
section, so in terms of adjoints, 
\begin{equation*}
Rv=\left(D(w\otimes I(v))\right)^*w,\ I(Rv)(\phi)=(D(w\otimes I(v))\phi,w).
\label{RW.60}\end{equation*}

The identity 
\begin{equation*}
u\otimes I(v)=(u\otimes I(w))\circ(w\otimes I(v))
\label{RW.51}\end{equation*}
combined with the derivation property shows that 
\begin{equation}
D(u\otimes I(v))=Lu\otimes I(v)+u\otimes I(Rv)\ \forall\ u,v\in\CI(Z;V).
\label{RW.52}\end{equation}
So $L$ and $R$ determine $D.$

For any four smooth sections, expanding out the composition formula
\eqref{17.5.2011.8} applied to $(u\otimes I(v))\circ (u'\otimes I(v'))$ and using 
\eqref{RW.52} gives, 
\begin{equation}
\begin{gathered}
(u',v)\left(Lu\otimes I(v')+u\otimes I(R v')\right)\\
=(u',v)Lu\otimes I(v')+(u',Rv)u\otimes I(v')\\
+(Lu',v)u\otimes I(v')+(u',v)u\otimes I(Rv').
\end{gathered}
\label{RW.53}\end{equation}
Thus the middle two terms on the right must cancel. This gives the adjoint
identity $(u',Rv)=-(Lu',v)$ for all $u'$ and $v,$ i.e.\ $R=-L^*$ and hence
\eqref{RW.52} becomes 
\begin{equation}
\begin{gathered}
D(u\otimes v)=L\circ(u\otimes I(v))-(u\otimes I(v))\circ L,\ \Mie\\
D(A)=[L,A],\ A\text{ of rank one}.
\end{gathered}
\label{RW.54}\end{equation}
By linearity this extends to all operators of finite rank and more
generally, if $E$ is any smoothing operator and $A,$ $B$ are of finite rank
then so is $AEB$ and the derivation identity shows that 
\begin{equation}
D(AEB)=[L,A]EB+AD(E)B+AE[L,B]
=[L,A]EB+A[L,E]B+AE[L,B]
\label{RW.55}\end{equation}
so $D(E)=[L,E]$ for all smoothing operators since any operator is
determined by the collection of the $AEB$ with $A$ and $B$ of rank one.

To see that $L:\CI(Z;V)\longrightarrow \CI(Z;V)$ is continuous observe that
the discussion above shows that, without assuming continuity, $L$ has a
well-defined adjoint, namely $-R.$ Thus $L:\CI(Z;V)\longrightarrow
\CI(Z;V)$ is closed, since if $u_n\to u$ and $Lu_n\to w$ then 
\begin{equation}
(w,\phi)=\lim (Lu_n,\phi)=-\lim(u_n,R\phi)=-(u,R\phi)=(Lu,\phi).
\label{RW.61}\end{equation}
As a closed linear operator on a Fr\'echet space, $L$ is necessarily continuous.

The uniqueness of $L$ up to the addition of a scalar multiple of the
identity follows from the fact that these are the only 
operators which commute with all smoothing operators.
\end{proof}

Now consider a filtration-preserving derivation on $\Psi^{\bbZ}(Z;V)$ as in
\eqref{RW.57}. It follows that it induces a derivation on
$\Psi^{-\infty}(Z;V),$ being the intersection of these spaces, and
Lemma~\ref{17.5.2011.1} generates an operator $L:\CI(Z;V)\longrightarrow
\CI(Z;V).$ Moreover, the identity \eqref{RW.55} again shows that
\begin{equation}
D(A)=[L,A]\ \forall\ A\in\Psi^{\bbZ}(Z;V).
\label{17.5.2011.17}\end{equation}

As an operator, $L$ determines and is determined by its Schwartz' kernel,
which we also denote $L\in\CmI(Z^2;V\boxtimes V)).$ If
$A\in\Psi^{\bbZ}(Z;V)$ it acts on $\CI(Z;V)$ and its formal adjoint is an
element $A^t\in\Psi^{\bbZ}(Z;V).$ Then the identity \eqref{17.5.2011.17}
can be written in terms of the kernel
\begin{equation}
(A\otimes\Id-\Id\otimes A^t)L=B,\ B\in\Psi^{\bbZ}(Z;V)
\label{17.5.2011.19}\end{equation}
also representing the Schwartz kernel of the operator $D(A)$. In general neither
term on the left here is a pseudodifferential operator on $Z^2.$ However,
if $A$ is a differential operator, say of order $1,$ then so is $A^t$ and
then \eqref{17.5.2011.19} represents a differential equation on $Z^2.$

It follows from \eqref{17.5.2011.19} that $L$ itself has wavefront
contained in the conormal bundle to the diagonal, since at any other
point in $T^*Z^2\setminus\{0\}$ it is possible to choose $A$ so that
$(A\otimes\Id-\Id\otimes A^t)$ is elliptic. Indeed, if
$(z,\zeta,z',\zeta')\in T^*Z^2=(T^*Z)^2$ is a non-zero vector where
$z\not=z'$ then either $A$ can be chosen to be elliptic at $z$ and vanish
near $z'$ or conversely and then $A-A^t$ is elliptic at this point. If
$z=z'$ but $\zeta\not=-\zeta'$ with one of these non-zero then $A$ can be
chosen to be elliptic at one point and characteristic at the other, making
$A-A^t$ microlocally elliptic.

In particular the kernel of $L$ is smooth away from the diagonal. Cutting
it off appropriately, $L$ can be decomposed into the sum of a smoothing
operator and an operator with kernel supported in a preassigned
neighbourhood of the diagonal. The smoothing term gives an inner derivation so
it is enough to suppose that $L$ has
support near the diagonal and then it is readily analysed in local
coordinates. It is enough to consider its action as a map from sections
supported in a coordinate patch over which $V$ is trivial, into sections
on the same coordinate patch, for a finite covering of $Z$ by coordinate
charts. In such coordinates $z,$ $L$ can be written in Weyl form
\begin{equation}
L(z,z')=(2\pi)^{-n}\int g\left(\frac{z+z'}2,\zeta\right)e^{iz\cdot(z-z')}d\zeta |dz'|
\label{18.5.2011.4}\end{equation}
where $g\in\CI(\Omega \times\bbR^n)\otimes M(N;\bbC)$, $\Omega$ 
is the domain of a coordinate chart and $g$ is polynomially bounded in $\zeta$. The
smoothness in the base variables follows from the restriction on the
wavefront set obtained above and the smoothness in the fibre variables from
the compactness of the support in the normal direction to the diagonal,
i.e.\ in $z-z'.$ Note that the function $g$ is well-defined locally.

Now the conjugation condition \eqref{17.5.2011.19} implies that for $\Omega'$ 
a subset of $\Omega$,
\begin{equation}
\begin{gathered}
[L,z_j] \in\Psi^{k}(\Omega';\bbC^N),\\
[L,D_{z_j}]\in \Psi^{k+1}(\Omega';\bbC^N)
\end{gathered}
\label{18.5.2011.5}\end{equation}
since in both cases the differential operators $z_j$ and $D_{z_j}$ can be
cut off very close to the boundary of the coordinate patch and then
\eqref{18.5.2011.5} holds in some slightly smaller domain $\Omega'.$ Since
the test operators are local, the kernels on the right are supported in the
same neighbourhood of the diagonal as the kernel of $L.$ Thus the
pseudodifferential operators can also be written locally uniquely in Weyl
form \eqref{18.5.2011.4} and it follows from this uniqueness that 
\begin{equation} 
\begin{gathered}
D_{\zeta_j}g\in S^{k}_{\phg}(\Omega '\times\bbR^n)\otimes M(N,\bbC),\\
D_{z_j}g\in S^{k+1}_{\phg}(\Omega '\times\bbR^n)\otimes M(N,\bbC)
\end{gathered}
\label{18.5.2011.6}\end{equation}
where the spaces on the right consist of the (matrix-valued) classical
symbols of some integral order. From the first of these it follows that 
\begin{equation}
\zeta\cdot\pa_{\zeta}g=h\in S^{k+1}_{\phg}(\Omega '\times\bbR^n)\otimes M(N,\bbC).
\label{18.5.2011.8}\end{equation}

This differential equation is easily solved near infinity in $\zeta.$
Namely, each of the terms which are homogeneous of non-zero degree on the
right can be solved away by a multiple on the left. Taking an asymptotic sum
of these terms gives an element $g'\in S^{k+1}_{\phg}(\Omega
'\times\bbR^n)\otimes M(N,\bbC)$ such that  
\begin{equation}
\zeta\cdot\pa_{\zeta}g'=h-h_0-h'',\ h''\in S^{-\infty}(\Omega '\times\bbR^n)\otimes M(N,\bbC)
\label{18.5.2011.9}\end{equation}
and where $h_0$ is homogeneous of degree $0$ in $|\zeta|>1.$ The rapidly
decaying term can be integrated away radially to give a rapidly decaying
solution of $\zeta\cdot\pa_{\zeta}g''=h''-r$ where $r$ has support in
$|\zeta|<1.$ It follows that  
\begin{equation}
g=g'+g''+\log|\zeta|\cdot h_0(z,\zeta)+g_0(z,\zeta)\text{ in }|\zeta|\ge 1
\label{18.5.2011.10}\end{equation}
where all terms are smooth and $g_0$ is homogeneous of degree $0$ in
$\zeta.$ Substituting this back into \eqref{18.5.2011.6} -- and noting that
all other terms are classical -- it follows that 
\begin{equation}
D_{\zeta_j}h_0(z,\zeta)=0,\ D_{z_j}h_0(z,\zeta)=0.
\label{18.5.2011.11}\end{equation}
Thus in fact $h_0$ is constant provided the cosphere bundle is connected,
i.e.\ $Z$ is connected and not the circle. Since the commutator with all constant
matrices must also be classical it follows that $h_0$ must be a constant
multiple of the identity matrix
\begin{equation}
g(z,\zeta)=c\Id\log|\zeta|+\tilde g,\ \tilde g\in S^{k+1}_{\phg}(\Omega
'\times\bbR^n)\otimes M(N,\bbC)\text{ in }|\zeta|>1.
\label{18.5.2011.12}\end{equation}

Now, consider some positive elliptic second order differential operator
with scalar principal symbol acting on sections of $V$ and take its complex
powers $P^z$, see \cite{Seeley1}. Then $\log P=dP^z/dz$ at $z=0$ is a
globally defined pseudodifferential operator which, whilst non-classical,
acts as a derivation on the classical operators since conjugation by $P^z$
maps $\Psi^{m}(\Omega;\bbC^N)$ to itself. Moreover the Weyl symbol of $\log
P$ is precisely of the form of a classical symbol (of order $0)$ plus
$\log|\zeta|$ in any local coordinates. It follows that everywhere locally
\begin{equation}
L-c\log P\in \Psi^{k+1}(Z;V)
\label{18.5.2011.13}\end{equation}
and hence this is globally true. This completes the proof of Proposition~\ref{Der}.
\end{proof}

We calculate the Hochschild cohomology groups of $\Psi^{\bbZ}(Z;V)$
using results of Brylinski-Getzler \cite{BG} as well as the H-unitality of
smoothing operators $\Psi^{-\infty}(Z;V)$ as explained in \cite{MN}.  More
precisely, the short exact sequence
\begin{equation}
\xymatrix{
0\ar[r]&\Psi^{-\infty}(Z;V)\ar[r]& \Psi^{\bbZ}(Z;V)\ar[r]& \Psi^{\bbZ}(Z;V)/\Psi^{-\infty}(Z;V)\ar[r]&0,
}
\end{equation}
gives rise to a long exact sequence of Hochschild cohomology groups
\begin{equation}
\xymatrix{
0\ar[r]& \HH^0(\Psi^{\bbZ}(Z;V)/\Psi^{-\infty}(Z;V))  \ar[r]&\HH^0(\Psi^{\bbZ}(Z;V)) \ar[r]&
\HH^0(\Psi^{-\infty}(Z;V)) \ar[r]^{\qquad\partial} & \ldots,
}
\end{equation}
where $\partial$ denotes the boundary operator.  Since
$\Psi^{-\infty}(Z;V)$ is H-unital (see \cite{Wodzicki2}), it is Hochschild
cohomologous to a matrix algebra, so by Morita invariance of Hochschild
cohomology, $\HH^k(\Psi^{-\infty}(Z;V)) = \HH^k(\CC) = \{0\},$ for all $k>0.$
Therefore we conclude that $\HH^k(\Psi^{\bbZ}(Z;V)) \cong
\HH^k(\Psi^{\bbZ}(Z;V)/\Psi^{-\infty}(Z;V)),$ for all $k\ne1.$ The right
hand side has been computed in \cite{BG} so we have proved:

\begin{theorem}
If\, $\dim Z\ge2$ and $Z$ is connected, then the Hochschild cohomology groups of $\Psi^{\bbZ}(Z;V)$ are,
\begin{equation}
\HH^k(\Psi^{\bbZ}(Z;V))
\cong \begin{cases} \bfH^k(S^*Z \times S^1, \CC) &\text{if }k\ne1,\\
 \CC& \text{if }k=1.\end{cases}
\end{equation}
\end{theorem}


\section{Derivations of formal pseudodifferential operators}\label{FormDer}


In the same setting as the preceding section, with $Z$ a compact manifold
and $\Psi^m(Z;V)$ denoting the space of all classical pseudodifferential
operators of order $m$ on $Z$ acting on sections of a complex vector bundle
$V$ over $Z,$ the quotient $\cB^{\bbZ} = \cB^{\bbZ}(T^*Z;\hom
V)=\Psi^{\bbZ}/\Psi^{-\infty}(Z;V)$ is the space of formal pseudodifferential
operators, also called the full symbol algebra. It may be identified by a
(non-canonical) choice of quantization with the space of `Laurent' series
of infinite sums of homogeneous sections, of integral degree, of $\hom(V)$
over $T^*Z\setminus0$ with homogeneity bounded above but not below. It is
then a star algebra in the sense that the product is the local bundle
composition at top level of homogeneity with the second term, when the
bundle is locally trivialized and Weyl quantization is chosen, given by the
Poisson bracket extended to matrices. The algebra acts on itself as a Lie
algebra of derivations with only multiples of the identity acting trivially.

\begin{proposition}\label{ForDer} If $Z$ is compact and connected, the
  space $\Out(\cB^\bbZ)$ of filtered outer derivations on $\cB^{\bbZ},$ the
  quotient of (algebraic) derivations by inner derivations, fits in the short
  exact sequence,
\begin{equation}
\xymatrix{
0\ar[r]&\Out(\Psi^{\bbZ})\ar[r]& \Out(\cB^{\bbZ})\ar[r]& \bfH^1(S^*Z;\bbC)\ar[r]&0.
}
\label{20.5.2011.3}\end{equation}
Equivalently, one has a (split) short exact sequence of Hochschild cohomology groups,
\begin{equation}
\xymatrix{
0\ar[r]&HH^1(\Psi^{\bbZ})\ar[r]& HH^1(\cB^{\bbZ})\ar[r]& \bfH^1(S^*Z;\bbC)\ar[r]&0.
}
\label{20.5.2011.3a}\end{equation}
\end{proposition}

\noindent This sequence serves to explain the simplifying assumption made by
Duistermaat and Singer \cite{DS} that $\bfH^1(S^*Z;\CC)=0$, as will be clear in  the proof.

\begin{proof} The filtered derivations on $\Psi^{\bbZ}(Z;V)$ certainly
  induce such derivations on the quotient, with inner derivations mapped to
  inner derivations, so the first map in \eqref{20.5.2011.3} is
  well-defined. It is also clearly injective from the 1-dimensional space
  generated by $[\log Q,\cdot]$ or from the two-dimensional case for the
  circle. We proceed to characterize all the filtered derivations on the
  formal symbol algebra.

Choosing a metric on $Z,$ the real powers of the metric function on
$T^*Z\setminus0$ are homogeneous of any given degree. This allows the
leading part of a derivation $D:\cB^{\bbZ}\longrightarrow \cB^{\bbZ+k}$ to
be normalized to map $D_k:\CI(S^*Z;\hom V)\longrightarrow \CI(S^*Z;\hom V)$
which is again a derivation for the matrix product. Directly from the
definition such a map is local with the value at any given point only
depending on the 1-jet, i.e.\ is given by a linear differential operator of
first order. In fact, all terms in the star product are local with
dependence on increasing order of jets, so evaluated at each point the
derivation is given by a linear differential operator on each term in the
full symbol. To examine this operator it suffices to work in local
coordinates and in terms of a local trivialization of $V.$ Thus $D_k,$ the
leading term of the derivation, is given by a matrix of first order
differential operators.

The constant term in $D_k$ is determined by evaluating the derivation on
constant sections of $\hom V=M(N,\bbC)$ near the point. When either factor
is constant the star product is just the matrix product. Necessarily the
constant term is then a derivation on $\hom V_p=\bbC^N$ at each point and
therefore is given by commutation with a matrix which is uniquely
determined up to addition of a multiple of the identity. The choice of a
matrix of trace zero is therefore unique. Since the derivation maps smooth
sections to smooth sections this defines a smooth section of $\CI(S^*Z;\hom
V).$ Composing with the appropriate power of the metric used to normalize
the leading part of the derivation above, this gives the leading term of a
symbol, which therefore gives an inner derivation with the same constant
term. Subtracting this from the original derivation gives a derivation with
leading term $D_k$ which annihilates the constant matrices of order $0.$  It follows
that this leading derivation is given by a section of $T(S^*Z)\otimes\hom V$
with the first factor acting as a vector field in local coordinates. In
fact it must then distribute over multiplication, at each point, by
constant matrices
\begin{equation*}
D_k(uu_0)(p)=(D_k(u))u_0(p),\ \nabla u_0(p)=0
\end{equation*}
at that point. So in fact $D_k$ is reduced to a differential operator with
scalar principal symbol which vanishes on the constants, i.e.\ a vector field
acting as a multiple of the identity. 

The action of $D$ on sections of any integral homogeneity $m$ can be
deduced by composition with the appropriate power of the metric function $g.$ Thus
on symbols of order $m$ the leading part of the derivation of order $m+k$ is
\begin{equation*}
D(u)_k\equiv(D(u)g^{-m-k})_0
\equiv D(ug^{-m})\cdot g^{-k}-mua^{-m}(g^{-k-1}Dg)\equiv
D_k(ug^{-m})-mbu
\end{equation*}
in terms of equality of the leading parts of symbols of order $0$ and with
$b$ an element of $\CI(S^*M).$ Thus in general the leading part of $D$ as a
map from homogenous sections of degree $m$ to homogeneous sections of degree
$m+k$ is given by a scalar vector field which is homogeneous of degree $k.$

Next we analyse the second term in homogeneity of the derivation. Working
now up to error of relative homogeneity $2$ the derivation induces a second map
$$
D(u)=D_ku+S_{k-1}(u)
$$
where $S_{k-1}$ increases homogeneity (of each term) by $k-1.$ In terms of
Weyl quantization the star product, up to second order, is

\begin{equation}
u*v=uv-\frac i2\{u,v\}
\label{RW.11}\end{equation}
where $\{u,v\}$ is the Poisson bracket acting in the components of matrix
multiplication. So the derivation identity on the product of two scalar
elements and a constant matrix, $uvE$ can be written
\begin{equation*}
\begin{gathered}
D_k(uv)E =D_k(u)vE+uD_k(v)E\\
\begin{aligned}
S_{k-1}(uvE)-S_{k-1}(u)vE-&uS_{k-1}(vE)=\\
\frac i2\big(D_k\{u,v\}E&-\{D_ku,v\}E-\{u,D_kv\}E\big).
\end{aligned}
\end{gathered}
\end{equation*}
The left side certainly involves no more than three derivatives in total,
but actually is only of second order. Taking $u(p)=v(p)=0$ allows the
principal symbol of this second order differential operator to be computed
-- the left side is therefore symmetric in the first derivatives of $u$ and
$v$ whereas the right side is clearly antisymmetric. So in fact both sides
are of order $1$ and since the right side annihilates constants in either
$u$ or $v$ it must vanish identically (and $S_{k-1}$ must itself be a
derivation). The resulting identity is precisely the condition that $D_k$
be a symplectic vector field, i.e.\ one which distributes over the Poisson
bracket, and hence is of the form
$$
\omega(\cdot, D_k)=\alpha ,\ d\alpha =0
\label{RW.8}
$$
where $\alpha$ is a closed form on $T^*Z\setminus0$ which is homogeneous of
degree $k.$

For any degree other than $0$ the closed homogeneous forms on
$T^*Z\setminus0$ are exact. In degree $0$ such a form is the sum of the
pull-back from $S^*Z$ of a closed form, plus a multiple of the closed form
$g^{-1}dg$ given by the metric function. The latter is exact in the sense
that it is $d\log g$ which corresponds precisely to the derivation given by
$\log Q.$ The exact forms arise from inner derivations.

That elements of $\bfH^1(S^*Z,\bbC)$ do correspond to derivations on
$\cB^{\bbZ}(T^*Z;\hom V)$ can be seen for instance by passing to the
universal cover $\tilde Z$ of $Z.$ Since the formal symbol algebra
corresponds to localization at the diagonal it can be identified with the
$\pi_1$-invariant part of the quotient of the properly supported
pseudodifferential operators on $\tilde Z$ by the properly supported
smoothing operators. On $T^*\tilde Z\setminus 0$ every closed form is exact
and the elements of $\bfH^1(S^*Z;\bbC)$ correspond to smooth functions on
$S^*\tilde Z$ which are $\pi_1$-invariant up to shifts by constants. These
can be realized as multiplication operators, hence as properly supported
pseudodifferential operators, on $\tilde Z,$ commutation with which induces
$\pi_1$-equivariant derivations on the formal symbol algebra, and hence
fully $\pi_1$-invariant derivations on the invariant subalgebra,
i.e.\ derivations on $\cB^{\bbZ}(T^*Z;\hom V).$

This completes the proof of \eqref{20.5.2011.3} and hence the Proposition.
\end{proof}


\section{Automorphisms of pseudodifferential operators}\label{sect:aut}


Next we consider the group of order-preserving automorphisms of the
classical pseudodifferential algebra; this group was characterized by
Duistermaat and Singer. We recall and somewhat extend the main theorem from
\cite{DS}. If $\chi:S^*Z\longrightarrow S^*Z'$ is a contact
transformation between two compact manifolds, which is to say a canonical
diffeomorphism between their cosphere bundles, let $\cF^s(\chi)$ denote the
linear space of Fourier integral operators associated to $\chi$ of complex
order $s;$ thus each $F\in\cF^s(\chi)$ is a linear operator
$F:\CI(Z)\longrightarrow \CI(Z')$ which has Schwartz kernel which is a
Lagrangian distribution with respect to the twisted graph of $\chi$
(\cite{HoFIO}). For the convenience of the reader a very brief discussion of
Fourier Integral operators can be found in the appendix.

\begin{theorem}\label{thm:DS} For any two compact manifolds $M_1$ and $M_2,$
  and complex vector bundles $V_1$ and $V_2$ over them, every linear
  order-preserving algebra isomorphism $\Psi^{\bbZ}(M_1;V_1)\longrightarrow
  \Psi^{\bbZ}(M_2;V_2)$ is of the form
\begin{equation}
\Psi^{\bbZ}(M_1;V_1)\ni A\longrightarrow FAF^{-1}\in\Psi^{\bbZ}(M_2;V_2)
\label{Duistermaat-Singer.2}\end{equation}
where $F\in\cF^s(\chi;V_1,V_2)$ is a classical Fourier integral operator of
complex order $s$ associated to a canonical diffeomorphism
$\chi:T^*M_1\setminus0\longrightarrow T^*M_2\setminus0$ and having inverse
$F^{-1}\in\cF^{-s}(\chi^{-1};V_2,V_1);$ $F$ is determined
by \eqref{Duistermaat-Singer.2} up to a non-vanishing multiple of the
identity.
\end{theorem}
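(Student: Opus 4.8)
The plan is to reduce the theorem to the structure of derivations established in Proposition~\ref{Der}, by first showing that any order-preserving isomorphism $\Phi:\Psi^{\bbZ}(M_1;V_1)\longrightarrow\Psi^{\bbZ}(M_2;V_2)$ restricts to an isomorphism of the smoothing ideals and then analyzing the induced map on kernels microlocally, in direct parallel with the proof of Proposition~\ref{Der}. First I would observe that $\Psi^{-\infty}(M_i;V_i)$ is intrinsically characterized inside $\Psi^{\bbZ}(M_i;V_i)$ as $\bigcap_m\Psi^m$, so $\Phi$ carries smoothing operators to smoothing operators. Applying the Eidelheit-type argument of Lemma~\ref{17.5.2011.1} to the pair of algebras --- using rank-one operators $u\otimes I(w)$ and the composition formula \eqref{17.5.2011.8} --- one gets linear maps $L:\CI(M_1;V_1)\longrightarrow\CI(M_2;V_2)$ and $R=-L^*$ such that $\Phi(u\otimes I(v))=Lu\otimes I(Rv)$ wait no, more precisely $\Phi(u\otimes I(v))=(Lu)\otimes I((L^{-1})^*v)$, where here the two-sided structure forces $L$ to be \emph{invertible} (since $\Phi$ is an isomorphism, the induced map on rank-one operators is bijective, and composition with $L^{-1}$ inverts it). So $\Phi(A)=LAL^{-1}$ on all of $\Psi^{-\infty}$, with $L:\CI(M_1;V_1)\longrightarrow\CI(M_2;V_2)$ a continuous linear isomorphism determined up to a scalar.

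Next I would upgrade this to all of $\Psi^{\bbZ}$. Because finite-rank operators are dense in the appropriate sense and $AEB$ for $A,B$ finite rank and $E\in\Psi^{\bbZ}$ recovers $E$, the identity \eqref{RW.55} --- now with $L$ and $L^{-1}$ on the two sides rather than commutators --- gives $\Phi(A)=LAL^{-1}$ for all $A\in\Psi^{\bbZ}(M_1;V_1)$. Writing $\mathcal{L}$ for the Schwartz kernel of $L$ on $M_2\times M_1$, the condition that $\Phi(A)\in\Psi^{\bbZ}(M_2;V_2)$ for every $A\in\Psi^{\bbZ}(M_1;V_1)$ becomes, in analogy with \eqref{17.5.2011.19}, the statement that $(A_2\otimes\Id)\mathcal{L}=\mathcal{L}(A_1\otimes\Id)$ modulo a pseudodifferential kernel, for any pair $A_1,A_2$ of pseudodifferential operators that are ``intertwined'' by $\mathcal{L}$ to leading order. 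The microlocal argument is then: testing against differential operators $A_1$ on $M_1$ and their images, the wavefront set of $\mathcal{L}$ must be contained in a canonical relation; arguing as in the text (choosing $A_i$ elliptic/characteristic at chosen points to make $A_2\otimes\Id-\Id\otimes A_1^t$ microlocally elliptic off this relation) forces $\WF'(\mathcal{L})$ to be the twisted graph of a homogeneous canonical transformation $\chi:T^*M_1\setminus0\longrightarrow T^*M_2\setminus0$. In particular $\chi$ is a diffeomorphism (because $L$ is invertible, so is the relation), and since it commutes with the cone structure it descends to a contact diffeomorphism $S^*M_1\to S^*M_2$; that $M_1$ and $M_2$ then have the same dimension is automatic.

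Having identified the canonical relation, it remains to show that $\mathcal{L}$ is genuinely a \emph{classical} Lagrangian distribution of some complex order $s$ --- i.e. that $L\in\cF^s(\chi;V_1,V_2)$ --- rather than merely having the correct wavefront set. This is the step I expect to be the main obstacle, and it is handled exactly as in the last paragraph of the proof of Proposition~\ref{Der}: one reduces to a local model near a point of the graph of $\chi$, transports $L$ by a reference classical elliptic Fourier integral operator $F_0$ associated to $\chi$ (which exists by H\"ormander's theory \cite{HoFIO}) so that $F_0^{-1}L$ becomes a properly supported operator whose conjugation sends $\Psi^m(M_1;V_1)$ into $\Psi^m(M_1;V_1)$, hence defines an \emph{automorphism} of $\Psi^{\bbZ}(M_1;V_1)$ up to smoothing; then the logarithm of this automorphism --- more precisely, the observation that $F_0^{-1}L$ has a Weyl symbol of the form (classical symbol of order $s$)~$+\ s\log|\zeta|$ modulo classical, compared against the complex powers $P^s$ of a fixed positive elliptic operator as in \eqref{18.5.2011.13} --- shows $F_0^{-1}L\in P^s\cdot(\text{classical }\Psi\text{DO})+\Psi^{-\infty}$. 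Composing back, $L\in\cF^s(\chi;V_1,V_2)$. Finally, $L^{-1}$ is a parametrix for $L$ up to smoothing, hence lies in $\cF^{-s}(\chi^{-1};V_2,V_1)$, and the smoothing error can be absorbed since $\Phi$ is an honest isomorphism; uniqueness of $L=F$ up to a non-vanishing scalar is inherited verbatim from Lemma~\ref{17.5.2011.1} (only multiples of the identity centralize all of $\Psi^{-\infty}$). This completes the proof.
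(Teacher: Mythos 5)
Your skeleton does follow the paper's route (restrict to the smoothing ideals, run the Eidelheit argument -- this is Proposition~\ref{RW.62} -- to get $\Phi(A)=LAL^{-1}$ with $L:\CI(M_1;V_1)\to\CI(M_2;V_2)$ a topological isomorphism, then localize $\WF(L)$ microlocally, and finally compose with a reference elliptic Fourier integral operator for $\chi$ and compare with complex powers). But two steps are genuinely gapped. First, the assertion that the wavefront computation ``forces $\WF'(\mathcal L)$ to be the twisted graph of a homogeneous canonical transformation, and $\chi$ is a diffeomorphism because $L$ is invertible'' is not a proof. Elliptic regularity applied to $B=A'\otimes\Id-\Id\otimes A^*$ only yields containments of the type \eqref{11.11.2009.3}; even the statement that $\WF(L)$ misses the two zero sections requires the specific device of taking $A$ to be a positive second-order \emph{differential} operator, so that $B$ is genuinely pseudodifferential and elliptic near the zero section of the other factor. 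To cut the containment down to a graph one needs the algebraic step: the order-preserving isomorphism induces an algebra isomorphism $\CI(S^*M_1;\hom V_1)\to\CI(S^*M_2;\hom V_2)$, whose maximal ideals produce a diffeomorphism $\chi$ of cosphere bundles; comparison of symbols of all orders and the commutator/Poisson-bracket computation then show the induced homogeneous map is symplectic; and the ``anticanonical'' alternative must still be excluded, which the paper does by a spectral-theoretic positivity argument (the image of the symbol of a positive elliptic first-order operator must be positive, since invertibility of $A+\tau\Id$ for $\tau\notin(-\infty,0)$ transfers to $A'$). Invertibility of $L$ by itself rules out none of this, and your sketch omits all of it.

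Second, your concluding step conflates the infinitesimal (derivation) picture of Proposition~\ref{Der} with the multiplicative one needed here. After forming $\tilde F=F_0^{-1}L$, the available information is multiplicative: for each $A\in\Psi^{k}(M_1;V_1)$ there is $I(A)\in\Psi^{k}(M_1;V_1)$ with $\tilde FA=I(A)\tilde F$ modulo smoothing, as in \eqref{Duistermaat-Singer.4}. At this stage $\tilde F$ is not known to be pseudodifferential, so it has no Weyl symbol to speak of, and even formally the object with ``symbol equal to a classical symbol plus $s\log|\zeta|$'' as in \eqref{18.5.2011.13} is $\log\tilde F$ (a derivation), not $\tilde F$ itself, which should rather be of the form $P^{s}$ times an elliptic classical operator of order $0.$ The paper's actual argument analyses the error $E(A)=I(A)-A\in\Psi^{k-1}$: its symbol is a derivation given by a vector field homogeneous of degree $-1$ distributing over the Poisson bracket \eqref{Duistermaat-Singer.5}, hence locally Hamiltonian with a closed homogeneous $1$-form which is locally $d$ of a smooth degree-zero function plus $s\log|\xi|$; one then constructs, microlocally and iteratively over orders, an elliptic $D\in\Psi^{s}(M_1)$ whose formal conjugation reproduces $I(A),$ and concludes that $\WF'(\tilde F-D)$ omits the corresponding diagonal points, so that $\tilde F$ is globally pseudodifferential and $L$ is a Fourier integral operator of complex order $s.$ Your ``Weyl symbol of $F_0^{-1}L$'' shortcut would need to be replaced by this (or an equivalent) argument; the uniqueness statement up to a nonzero multiple of the identity is, as you say, immediate from the smoothing-ideal step.
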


\noindent This is the result of \cite{DS} except the restriction that
$\bfH^1(S^*Z, \CC)=\{0\}$ is removed; for simplicity of presentation the case of
non-compact manifolds is not considered here, but on the other hand the
action on sections of vector bundles is included. The proof is also
essentially that of \cite{DS} with some rearrangement; it is closely
parallel to the discussion of derivations above. The only significant
differences from \cite{DS} are the use of a microlocal regularity argument
in place of some of the more constructive methods in the original and an
argument using spectral theory to eliminate the `anticanonical'
possibility. Note that in general there are \emph{no} invertible Fourier
integral operators between two manifolds. For such operators to exist, the
manifolds must certainly have the same dimension, the cosphere bundles must
be contact-diffeomorphic and the vector bundles must also have the same
rank. There is also an index obstruction, \cite{EM}, \cite{LN}.

\begin{remark}\label{PostArx.4} No continuity needs to be assumed of the
  automorphism. However, if one considers a finite dimensional family of
  automorphisms which are continuous or smooth in the sense that the image
  of any given pseudodifferential operator is smooth in the parameters in
  terms of the Fr\'echet topology on pseudodifferential operators then the
  proof below shows that the resulting family of projective Fourier
  integral operators also depends continuously or smoothly on the parameters.
\end{remark}

We start with a more general result for the automorphisms of the algebra of
smoothing operators. This is a form of the `Eidelheit Lemma' from
\cite{DS}. Since the setting is slightly different we give a proof.

\begin{proposition}\label{RW.62} For any two compact manifolds $M_1$ and $M_2,$
and complex vector bundles $V_1$ and $V_2$ over them, every linear
algebra isomorphism $\Psi^{-\infty}(M_1;V_1)\longrightarrow
\Psi^{-\infty}(M_2;V_2)$ is of the form
\begin{equation}
\Psi^{-\infty}(M_1;V_1)\ni A\longrightarrow GAG^{-1}\in\Psi^{-\infty}(M_2;V_2)
\label{Duistermaat-Singer.2a}\end{equation}
where $G:\CI(M_1;V_1)\longrightarrow \CI(M_2;V_2)$ is a topological
isomorphism (with respect to the standard Fr\'echet topology) with formal transpose
$G^t:\CI(M_2;V_2\otimes\Omega)\longrightarrow \CI(M_1;V_1\otimes\Omega)$
which is also a topological isomorphism in the same sense.
\end{proposition}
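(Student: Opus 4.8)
The plan is to mimic the structure of the proof of Lemma~\ref{17.5.2011.1}, replacing \emph{derivation identities} by \emph{multiplicativity}, and to use the Eidelheit-type argument on rank-one operators to extract the operator $G$. First I would identify $\Psi^{-\infty}(M_i;V_i)$ with $\CI(M_i^2;K(V_i))$ and, after fixing smooth positive densities $\nu_i$ and fibre inner products, realize both as rank-one operator algebras: any pair $u,w\in\CI(M_i;V_i)$ gives $u\otimes I(w)$ with composition $(u\otimes I(w))\circ(u'\otimes I(w'))=(u',w)\,u\otimes I(w')$ exactly as in \eqref{17.5.2011.8}. Since an algebra isomorphism $\Phi$ preserves rank, it carries rank-one idempotents to rank-one idempotents. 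Fix a norm-one $w\in\CI(M_1;V_1)$, let $e=w\otimes I(w)$, a rank-one idempotent; then $\Phi(e)=g\otimes I(h)$ with $I(h)(g)=1$. For any $u$, $u\otimes I(w)=(u\otimes I(w))\circ e$, so $\Phi(u\otimes I(w))=\Phi(u\otimes I(w))\circ\Phi(e)$ has range spanned by $g$; define $Gu\in\CI(M_2;V_2)$ by $\Phi(u\otimes I(w))=Gu\otimes I(h)$. This $G$ is linear (clear) and I would check it is a topological isomorphism onto its image by the same closed-graph argument as in \eqref{RW.61}, once I have the transpose in hand.

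Next I would produce the transpose. Dually, using $w\otimes I(v)=e\circ(w\otimes I(v))$ I get $\Phi(w\otimes I(v))=g\otimes I(Hv)$ for a well-defined conjugate-linear (in the appropriate convention, matching the anti-linearity of $v\mapsto I(v)$) map $H:\CI(M_1;V_1)\to\CI(M_2;V_2)$; then the general rank-one element factors as $u\otimes I(v)=(u\otimes I(w))\circ(w\otimes I(v))$ and multiplicativity gives
\begin{equation*}
\Phi(u\otimes I(v))=(Gu)\otimes I(Hv),\quad\text{with the normalization }I(h)(g)=1,
\end{equation*}
after rescaling so that the scalar $(g,g)$-type factor is absorbed. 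Comparing the two ways of composing four rank-one operators — exactly the computation leading to \eqref{RW.53}–\eqref{RW.54} — forces the compatibility $(Gu,Hv)_{M_2}=(u,v)_{M_1}$ for all $u,v$, i.e.\ $H$ is (up to the density identifications) the inverse-adjoint of $G$, so that $G^t$, defined via $\langle G^tf,u\rangle=\langle f,Gu\rangle$, is identified with a map $\CI(M_2;V_2\otimes\Omega)\to\CI(M_1;V_1\otimes\Omega)$ which is bijective with continuous inverse. By the closed graph theorem on these Fréchet spaces (as in the final paragraph of Lemma~\ref{17.5.2011.1}), both $G$ and $G^t$ are continuous, hence topological isomorphisms. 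Finally, from $\Phi(u\otimes I(v))=Gu\otimes I(G^{-t}v)$ one reads off, exactly as in \eqref{RW.55}, that $\Phi(A)=GAG^{-1}$ for all finite-rank $A$, and then for all smoothing $A$ by writing $A$ through its pairings $A'AA''$ against rank-one operators $A',A''$, since $\Phi(A'AA'')=\Phi(A')\Phi(A)\Phi(A'')$ while the left side equals $G(A'AA'')G^{-1}$ forces $\Phi(A)=GAG^{-1}$ on the dense-enough family of all such products, and every smoothing operator is determined by these.

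The main obstacle I expect is not the algebra but the \emph{smoothness/continuity bookkeeping}: one must check that $G$ genuinely lands in $\CI$ (not merely distributions) and that the rank-one idempotent $\Phi(e)$ really has the form $g\otimes I(h)$ with $g,h$ smooth sections — this is where the identification of $\Psi^{-\infty}$ with $\CI(M^2;K(V))$ and the fact that a rank-one smoothing operator has kernel $(g\boxtimes h^*)\nu$ with $g,h\in\CI$ does the work. A secondary point requiring care is the interplay of the two chosen densities $\nu_1,\nu_2$ and the resulting bundle identifications $K(V_i)\simeq V_i\boxtimes V_i'$, so that the transpose is stated cleanly on $\CI(M_2;V_2\otimes\Omega)$; this is purely a matter of keeping track of which factor carries the density. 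No microlocal input is needed here — unlike the pseudodifferential case — because $\Psi^{-\infty}$ is just an algebra of smooth kernels, so the argument is a direct Eidelheit-style reconstruction and the proof is short.
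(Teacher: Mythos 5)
Your proposal is correct and follows essentially the same route as the paper's proof: fix a rank-one projection, read off $G$ (and the auxiliary map $H$) from the action of the isomorphism on the rank-one operators it absorbs on each side, use multiplicativity on composites of rank-one elements to get the adjoint compatibility that makes $G$ closed (hence continuous, with $G^t$ a topological isomorphism), and then extend to all smoothing operators via their composites with rank-one operators. The only point you assert rather than justify is that the isomorphism carries rank-one operators to rank-one operators; this is exactly the algebraic Eidelheit characterization the paper records (an element $R$ has rank one if and only if $(AR)^2$ is a scalar multiple of $AR$ for every $A$), i.e.\ precisely the content of the ``Eidelheit-type argument'' you invoke at the outset.
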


\begin{proof} Consider such an isomorphism between the algebras of smoothing operators
\begin{equation}
\begin{gathered}
L:\Psi^{-\infty}(M_1;V_1)\overset{\simeq}\longrightarrow \Psi^{-\infty}(M_2;V_2)\\
L(A\circ B)=L(A)\circ L(B).
\end{gathered}
\label{RW.15}\end{equation}

First, note (this is essentially Eidelheit's argument from
\cite{Eidelheit1}) that the elements, $R,$ of rank 1 in
$\Psi^{-\infty}(M;V)$ are characterized algebraically by the condition that
for any other element $A$ the composite $(AR)^2=c(AR)$ for some constant
$c.$ In one direction this is just the observation that $RAR=cR.$
Conversely if $R$ has rank two or greater it is straightforward to construct a finite
rank smoothing operator $A$ which does not have this property.

Thus $L$ must map the elements of rank 1 in the domain onto the
corresponding set in the range space. As in \S1, choose Hermitian
inner products on the bundles $V_1$ and $V_2$ and positive smooth
densities, $\nu_i,$ on each of the manifolds. The inner products induce
conjugate linear isomorphisms with the duals, $V_i'\longrightarrow V_i,$
and this allows the smoothing operators to be
identified with smooth sections of the homomorphism bundle but acting
through the antilinear isomorphism (fixed by the inner products) from
$V_i$ to $V_i'$ which will be denoted by replacing $\phi\in\CI(M_i;V_i)$ by
$\phi'\in\CI(M_i,V'_i):$  
\begin{equation}
\begin{gathered}
\Psi^{-\infty}(M_i,V_i)\equiv \CI(M_i^2;V_i\boxtimes V_i),\\
A\phi(x)=\int_{M_i}\langle a(x,\cdot),\phi'(\cdot)\rangle_{V_i}\nu_i. 
\end{gathered}
\label{RW.63}\end{equation}

Now choose a non-zero element $v\in\CI(M_1;V_1)$ with $\int_{M_1}\langle
v,v\rangle _{V_1}\nu_1=1.$ This defines a projection of rank 1, $\pi_v$
with kernel under the identification \eqref{RW.63} 
\begin{equation}
p(x,y)=v(x)v(y)=(v\otimes v)(x,y)\in\CI(M_1^2;V_1\otimes V_1).
\label{RW.64}\end{equation}
The image of this projection under $L$ is also a projection of rank
one. Choose an element $w\in\CI(M_2;V_2)$ in the range of $L(\pi_v);$ it
follows that there exists some element $h\in\CI(M_2;V_2)$ with
$\int_{M_2}\langle w,h\rangle _{V_2}\nu_2=1$ such that
\begin{equation}
L(\pi_v)\text{ has kernel } w\otimes h\in\CI(M_2^2;V_2\otimes V_2).
\label{RW.65}\end{equation}

Now, consider the subset of $\Psi^{-\infty}(M_1;V_1)$ consisting of the rank one
elements, $R,$ such that $R\pi_v=R.$ These have kernels of the form 
\begin{equation}
r=\phi\otimes v
\label{RW.66}\end{equation}
and $L(R)$ has rank one and satisfies $L(R)L(\pi_v)=L(R)$ so has kernel of
the form 
\begin{equation}
\psi\otimes h
\label{RW.67}\end{equation}
where $\psi\in\CI(M_2;V_2)$ is uniquely, and hence linearly, determined by
$\phi.$ This fixes a linear isomorphism 
\begin{equation}
G:\CI(M_1;V_1)\longrightarrow \CI(M_2;V_2)\Mst L(\phi\otimes
v)=(G\phi)\otimes h
\label{RW.68}\end{equation}
in terms of kernels.

This argument can be repeated for the operators of rank one,
$S\in\Psi^{-\infty}(M_1;V_1)$ such that $\pi_v S=S.$ This induces a second
linear (algebraic) isomorphism $H:\CI(M_1;V_1)\longrightarrow \CI(M_2;V_2)$
such that 
\begin{equation}
L(v\otimes\psi)=w\otimes (H\psi).
\label{RW.69}\end{equation}
Now, the composites of these two classes of operators are multiples of the
projections: 
\begin{equation}
SR=c\pi_v,\ S=v\otimes\eta,\ R=\phi\otimes v\Longrightarrow
c=\int_{M_1}\langle \eta,\phi\rangle _{V_1}.
\label{RW.70}\end{equation}
Since $L(SR)=L(S)L(R)$ it follows that 
\begin{equation}
\begin{gathered}
\int_{M_1}\langle \eta,\phi\rangle _{V_1}=\int_{M_2}\langle H\eta,G\phi\rangle _{V_2},\\
\int_{M_1}\langle H^{-1}\psi,\phi\rangle _{V_1}=\int_{M_2}\langle \psi,G\phi\rangle _{V_2}
\end{gathered}
\label{RW.71}\end{equation}
for all smooth sections. This shows that $G$ is a closed operator and
hence, is continuous. Moreover, $H^{-1}$ is the adjoint of $G.$

Taking the composite the other way gives a general smoothing operator of
rank one: 
\begin{equation}
RS\text{ has kernel }\phi\otimes\eta\Longrightarrow L(R)\text{ has kernel }
G\phi\otimes H\eta=(G\otimes H)\phi\otimes\eta
\label{RW.72}\end{equation}
where $G\otimes H:\CI(M_1^2;V_1\boxtimes V_1)\longrightarrow
\CI(M_2^2;V_2\boxtimes V_2).$ This is the formula
\eqref{Duistermaat-Singer.2a} on the elements of rank one, and hence its
linear span. Now the formula \eqref{Duistermaat-Singer.2a} follows in
general, since a smoothing operator is determined by its composites with
smoothing operators of rank one and these composites are themselves of rank one, so
\begin{equation}
\begin{gathered}
L(A\circ R)=L(A)\circ L(R)=GA\circ RG^{-1}=GAG^{-1}GRG^{-1}\\
\Longrightarrow L(A)=GAG^{-1}.
\end{gathered}
\label{RW.75}\end{equation}
\end{proof}

\begin{proof}[Proof of Theorem~\ref{thm:DS}] The assumption that the
  isomorphism is order-preserving implies that it induces an isomorphism
  between the smoothing ideals, so Proposition~\ref{RW.62} applies directly
  and gives $G$ for which \eqref{Duistermaat-Singer.2} holds, since as in
  the proof above a pseudodifferential operator is determined by its
  composites with rank one smoothing operators.

Denoting this isomorphism now by $F,$ it remains to show that it is a
Fourier integral operator. The notation for kernels will be continued from
the discussion above, corresponding to a choice of inner products on the
bundles and smooth positive-definite densities on the manifolds.

Consider $A\in\Psi^1(M_1;V_1)$ which is invertible with inverse
$A^{-1}\in\Psi^{-1}(M_1;V_1)$ and hence in particular is elliptic; such an
operator always exists. Let the image be $A'=FAF^{-1}\in\Psi^1(M_2;V_2);$ it
must similarly be invertible and hence elliptic. If the Schwartz kernel of $F$ is
again denoted $F\in\CmI(M_2\times M_1;V_2\otimes V_1),$ then 
\begin{equation}
FA=A'F\Longrightarrow BF=0,\ B=A'\otimes\Id-\Id\otimes A^*
\label{11.11.2009.2}\end{equation}

Although \eqref{11.11.2009.2} is not a pseudodifferential equation,
since $B$ is not in general a pseudodifferential operator on $M_2\times M_1,$ it
behaves as though it were in terms of regularity.

\begin{proposition}\label{Duistermaat-Singer.8} The Schwartz kernel of $F$
satisfies 
\begin{equation}
\begin{gathered}
\WF(F)\cap ((T^*M_2\setminus\{0\})\times\{0\})=\emptyset=\WF(F)\cap
  (\{0\}\times (T^*M_1\setminus\{0\})),\\
\WF(F)\subset\{\sigma (A)(x,-\xi)=\sigma (A')(y,\eta)\}\subset
(T^*M_2\setminus\{0\})\times(T^*M_1\setminus\{0\}).
\end{gathered}
\label{11.11.2009.3}\end{equation}
\end{proposition}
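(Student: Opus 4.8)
The plan is to deduce both lines of \eqref{11.11.2009.3} from the single identity \eqref{11.11.2009.2}, $BF=0$ with $B=A'\otimes\Id-\Id\otimes A^*$, by a microlocal elliptic regularity argument applied to $B$. As noted just above, one may take $A\in\Psi^1(M_1;V_1)$ elliptic and invertible, and it is harmless to require in addition that it have scalar principal symbol; then $A'=FAF^{-1}\in\Psi^1(M_2;V_2)$ is automatically elliptic and invertible. Write $(x,\xi)$ for the canonical coordinates on $T^*M_1$ and $(y,\eta)$ for those on $T^*M_2$. Since $\Id\otimes A^*$ acts only in the $M_1$ variable and $A'\otimes\Id$ only in the $M_2$ variable, $B$ has "joint symbol"
\[
\sigma(A')(y,\eta)-\sigma(A^*)(x,\xi)=\sigma(A')(y,\eta)-\sigma(A)(x,-\xi)\,\Id,
\]
homogeneous of degree one on $\big(T^*M_2\times T^*M_1\big)\setminus0$, with $\sigma(A')(y,0)=0$ and $\sigma(A)(x,0)=0$ by homogeneity.

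The essential point, flagged in the paragraph before the Proposition, is that although $A'\otimes\Id$, $\Id\otimes A^*$, and hence $B$, are \emph{not} pseudodifferential operators on $M_2\times M_1$ in the usual sense --- the symbol of $A'\otimes\Id$, being independent of $\xi$, violates the symbol estimates in the $\xi$ directions --- $B$ behaves like one for the purposes of microlocal regularity. I would make this precise by splitting $\big(T^*M_2\times T^*M_1\big)\setminus0$ into the three conic regions $\{|\xi|\le\epsilon|\eta|\}$, $\{|\eta|\le\epsilon|\xi|\}$ and $\{\epsilon|\xi|\le|\eta|\le\epsilon^{-1}|\xi|\}$, and, after multiplication by a conic cut-off localizing to one of them, observing that $B$ becomes a genuine order-one pseudodifferential operator on $M_2\times M_1$: in the first region $|(\eta,\xi)|\sim|\eta|$, so the cut-off of $A'\otimes\Id$ lies in the standard class $S^1$ in $(\eta,\xi)$ while $\Id\otimes A^*$ is of strictly smaller relative size, and symmetrically in the second; in the third region $|\eta|\sim|\xi|\sim|(\eta,\xi)|$ and both cut-off terms lie in $S^1$. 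Moreover $B$ is \emph{elliptic} on the first region (a small perturbation of the elliptic symbol $\sigma(A')$), symmetrically on the second, and on the third exactly away from the set where $\sigma(A')(y,\eta)-\sigma(A)(x,-\xi)\,\Id$ fails to be invertible. (Alternatively one can house $B$ in the product-type Weyl--Hörmander calculus attached to the metric $\langle\eta\rangle^{-2}|d\eta|^2+\langle\xi\rangle^{-2}|d\xi|^2+|dy|^2+|dx|^2$, in which $A'\otimes\Id$ and $\Id\otimes A^*$ have orders $\langle\eta\rangle$ and $\langle\xi\rangle$ respectively.)

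Granting this, $B$ admits a microlocal left parametrix near every point at which its joint symbol is invertible, so $BF=0$ forces
\[
\WF(F)\subset\Char(B)=\{(y,\eta,x,\xi):\sigma(A)(x,-\xi)=\sigma(A')(y,\eta)\},
\]
which, by the first two regions, already lies in $\{\epsilon|\xi|\le|\eta|\le\epsilon^{-1}|\xi|\}$. It then remains to locate $\Char(B)$. If $\eta=0$ the joint symbol is $-\sigma(A)(x,-\xi)\,\Id$, invertible whenever $\xi\ne0$ since $A$ is elliptic, so $\Char(B)$ meets no point of $\{0\}\times(T^*M_1\setminus0)$; symmetrically, if $\xi=0$ the joint symbol is $\sigma(A')(y,\eta)$, invertible for $\eta\ne0$ since $A'$ is elliptic, so $\Char(B)$ meets no point of $(T^*M_2\setminus0)\times\{0\}$. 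Together with $\WF(F)\subset\Char(B)$ these give the two emptiness statements and the inclusion $\Char(B)\subset(T^*M_2\setminus0)\times(T^*M_1\setminus0)$, i.e. all of \eqref{11.11.2009.3}.

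The one genuinely non-routine step is the middle one: verifying that $B$, which is not a pseudodifferential operator on $M_2\times M_1$, still admits microlocal parametrices wherever its joint symbol is elliptic. The conic decomposition reduces this to the standard elliptic parametrix construction, but one must check that the $S^1$ estimates survive the conic cut-offs --- they do, because in each region $|(\eta,\xi)|$ is comparable to whichever of $|\eta|$, $|\xi|$ is "active" and the cut-off derivatives gain the corresponding negative powers --- after which everything reduces to bookkeeping with principal symbols and the elementary microlocal elliptic regularity theorem.
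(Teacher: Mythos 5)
Your argument for the second line of \eqref{11.11.2009.3} (the characteristic-variety inclusion, via ellipticity of $B$ in the region $|\xi|\sim|\eta|$ where it really is microlocally a standard pseudodifferential operator) is fine and is what the paper does. The gap is in the first line, the disjointness from the zero sections, and it sits exactly at the step you yourself flag as "non-routine". Your claim that after a conic cut-off to $\{|\xi|\le\epsilon|\eta|\}$ the operator $B$ becomes a genuine order-one pseudodifferential operator on $M_2\times M_1$ is false: the cut-off of $A'\otimes\Id$ does lie in $S^1$ there, but the inactive term $\Id\otimes A^*$ does not. Its full symbol obeys only $|\partial_\xi^\alpha\sigma(A^*)(x,\xi)|\lesssim\langle\xi\rangle^{1-|\alpha|}$, so for $|\alpha|\ge 2$ it is of size $\sim 1$ at bounded $\xi$, while membership in $S^1$ in the joint frequency would force decay like $\langle(\eta,\xi)\rangle^{1-|\alpha|}\sim\langle\eta\rangle^{1-|\alpha|}$; the conic cut-off does nothing to repair this, since the failure is in the $\xi$-derivatives of the $\xi$-dependent factor, not in the cut-off or the active factor. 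Smallness of $\sigma(A^*)$ relative to the elliptic term is not membership in a symbol class, and the standard parametrix construction needs estimates on all derivatives of the full symbol of $B$, so "the standard elliptic parametrix construction" is not available precisely where you need it; the same defect occurs, mirrored, in your second region. The Weyl--H\"ormander fallback does not close the gap as stated either: $B$ does lie in the product calculus and is elliptic there for the weight $\langle\eta\rangle+\langle\xi\rangle$, but parametrices in that calculus gain only powers of the Planck function $\max(\langle\xi\rangle^{-1},\langle\eta\rangle^{-1})$, which is $\sim1$ on $\{|\xi|\lesssim 1,\ |\eta|\to\infty\}$, i.e.\ exactly over the zero section of $T^*M_1$, so ellipticity there does not by itself give microlocal smoothness of $F$; one would need a genuinely finer bi-filtered (SG-type) iteration, which you have not supplied.

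This is the point the paper explicitly calls "not quite obvious", and it is resolved there by a different device: choose $A$ to be an honest \emph{differential} operator of order $2$ (positive, elliptic, made invertible by adding a large constant). Then $\Id\otimes A^*$ is a differential operator on $M_2\times M_1$, whose polynomial symbol does satisfy the joint symbol estimates, so near the zero section of $T^*M_1$ the operator $B$ is microlocally a standard pseudodifferential operator of order $2$ and is elliptic there (the transformed operator $A'$ is invertible with inverse of order $-2$, hence elliptic, and its symbol dominates $\sigma(A)(x,-\xi)$ when $|\xi|\ll|\eta|$); ordinary microlocal elliptic regularity then kills the wavefront set over that zero section. The other zero section is handled by reversing the roles of $M_1$ and $M_2$, i.e.\ taking a differential operator on $M_2$ and pulling it back through the inverse isomorphism --- note that one needs two different pairs $(A,A')$, one for each zero section, since the conjugate of a differential operator is in general only pseudodifferential. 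If you want to avoid the differential-operator trick you must replace your region-one/region-two step by a genuine product-type calculus argument that gains powers of $\langle\eta\rangle^{-1}$ alone in the region $\{|\xi|\le\epsilon|\eta|\}$; as written, the proposal does not prove the first line of \eqref{11.11.2009.3}.
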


\begin{proof} The operator $B$ in \eqref{Duistermaat-Singer.2} is
  microlocally a pseudodifferential operator away from the zero sections in
  each factor of the product. The second part of \eqref{11.11.2009.3}
  corresponds to elliptic regularity in this region, meaning that the
  wavefront set of a solution $F$ to $BF=0$ is contained in the
  characteristic variety. So, it is actually the first part of
  \eqref{Duistermaat-Singer.3} that is not quite obvious. Fortunately we
  are able to choose either $A$ or $A'$ to be a differential operator, say
  of order $2$ since we know that such a globally elliptic operator, with
  positive diagonal symbol, does exist, indeed by adding large positive
  constant to it we can assume it is invertible. The corresponding
  transformed operator will not in general be differential but must be
  invertible, with inverse of order $-2$ and hence must be elliptic. It
  follows not only that the corresponding $B$ in
  \eqref{Duistermaat-Singer.3} is microlocally a pseudodifferential
  operator of order $2$ away from the zero section of $T^*M_2,$ and
  \emph{is} a pseudodifferential operator near the zero section of $T^*M_1$
  but also that it is elliptic near the zero section of $T^*M_1.$ So the
  second half of the first part of \eqref{Duistermaat-Singer.3} again
  follows by (microlocal) elliptic regularity. Reversing the roles of $M_1$
  and $M_2$ the same argument applies to give microlocality near the zero
  section of $T^*M_2.$ \end{proof}

Note in particular that the disjointness of the wavefront set of the
  kernel from the zero section in either factor implies that if
  $A\in\Psi^{-\infty}(M_1;V_1)$ is a smoothing operator, or
  $A'\in\Psi^{-\infty}(M_2;V_2),$ then separately
\begin{equation}
FA,\ A'F\text{ have kernels in }\CI(M_2\times M_1;V_2\boxtimes V_1'\otimes\Omega (M_1))
\label{Duistermaat-Singer.3}\end{equation}
-- rather than just the difference being smoothing which follows directly
from the algebra condition.

Now, the earlier parts of the argument in \cite{DS}, which we briefly recall, can
  be followed. Since, for any manifold and vector bundle,
  $\Psi^0(M;V)/\Psi^{-1}(M;V)=\CI(S^*M;\hom V)$ is an isomorphism 
  of algebras, the isomorphism $L$ must induce an algebra isomorphism of
$l$ of $\CI(S^*M_1;\hom V_1)$ to $\CI(S^*M_2;\hom V_2).$ This in turn must
  map maximal and prime ideals to such ideals. These ideals in $\CI(S^*M;\hom V)$
  correspond to vanishing at a point and hence $l$ induces a bijection
  $\chi:S^*M\longrightarrow S^*M'$ and $l$ must itself be pull-back with
  respect to the inverse of $l.$ Thus $\chi$ must be a diffeomorphism such 
  that the second part of \eqref{11.11.2009.3} is refined to 
\begin{equation}
\WF(F)\subset\operatorname{graph}'(\chi),
\label{11.11.2009.4}\end{equation}
the twisted graph. The same argument of course applies to the inverse of
$G$ and the inverse of $\chi.$ 

The quotient $\Psi^1(M;V)/\Psi^0(M;V)$ is the space of smooth sections
  on $T^*M\setminus 0_M\longrightarrow \pi^*\hom(V)$ which are homogeneous
  of degree $1.$ It is generated, modulo the multiplicative action of
  $\CI(S^*M;\hom V),$ by the symbol of one element with scalar principal
  positive and elliptic symbol. In  \cite{DS} it is observed that the 
  symbol of the image must be real and non-vanishing. In fact it follows easily
  that it must be positive, at least for compact manifolds. Indeed, $A+\tau\Id$
  is invertible for all $\tau\in\bbC\setminus(-\infty,0)$ so the same must be
  true of $A'.$ If its symbol were negative then $A'=Q_0-Q_1$ where $Q_1$
  is positive (so self-adjoint). It follows that $P'-t$ has an inverse in
  $\Psi^{-1}(M')$ for $t>T,$ so the same is true of $A$ which contradicts
  the spectral theorem. Thus in fact the image of the symbol of $A$ must be
  positive.

Again as in \cite{DS} it follows that the symbols of $A'$ and $A$ for
all orders are related by a homogeneous diffeomorphism which projects to
$\chi.$ The behaviour of commutators shows that this diffeomorphism must be
symplectic for the difference symplectic structure, that is
`canonical'.

Lemma 2 in \cite{DS} now applies without the possibility of the
`anticanonical' map. So, there is a homogeneous canonical transformation,
still denoted $\chi,$ such that under the isomorphism $\sigma
(A)=\chi^*\sigma (A')$ and $\WF(F)\subset\operatorname{graph}'(\chi).$

The remaining steps are now slightly changed in that we are trying to
  show that the given operator $F$ is a Fourier integral operator
  associated to $\chi.$ We can now work with the
  weaker intertwining condition that for each $A\in\Psi^{k}(M)$ there
  exists $A'\in\Psi^k(M)$ such that the kernels satisfy
\begin{equation}
FA=A'F+\CI(M'\times M;V\otimes V'\otimes\Omega _R),
\label{11.11.2009.5}\end{equation}
that is, they are conjugate up to smoothing errors.

Choose a global pair of elliptic Fourier integral operators, $G$ and $H,$
associated to $\chi$ and $\chi^{-1}$ not necessarily invertible but
essential inverses of each other. Consider $\tilde F=HF$ acting now from
$\CI(M_1;V_1)$ to $\CI(M_1;V_1).$ From the calculus of wavefront sets this has operator
wavefront set in the identity relation, the same as a pseudodifferential
operator and we wish to show that it is one. From \eqref{11.11.2009.5} we
deduce that for each $A\in\Psi^k(M)$ there exists $I(A)\in\Psi^k(M),$
namely $I(A)=HA'G,$ such that
\begin{equation}
\tilde FA=I(A)\tilde F+A'',\ A''\in\Psi^{-\infty}(M),\ E(A)=I(A)-A\in\Psi^{k-1}(M).
\label{Duistermaat-Singer.4}\end{equation}

Again we proceed essentially as in \cite{DS}, see also
\S\ref{sect:der}. The symbol of $E(A)$ is given by a derivation, hence a
vector field, homogeneous of degree $-1,$ applied to the symbol of $A$ and
which distributes over the Poisson bracket:
\begin{equation}
\sigma _{k-1}(E(A))=V\sigma _k(A),\ V\{a,b\}=\{Va,b\}+\{a,Vb\}.
\label{Duistermaat-Singer.5}\end{equation}
From this it follows that it is locally Hamiltonian on $T^*M\setminus0_M,$
meaning that
\begin{equation}
\omega(V,\cdot)=\gamma 
\label{Duistermaat-Singer.6}\end{equation}
where $\gamma$ is a well-defined smooth closed 1-form which is homogeneous of
degree $0$ on $T^*M\setminus0_M.$ The only such 1-forms are locally (on
cones in $T^*M\setminus0_M)$ the differentials of the sum of a smooth
function homogeneous of degree $0$ and $s\log|\xi|$ where $|\xi|$ is a real
positive function homogeneous of degree $0$ and $s$ is a complex constant. 

Now working microlocally, and iterating the argument over orders as
  in \cite{DS}, we can construct an elliptic pseudodifferential
  operator $D\in\Psi^s(M)$ which, by formal conjugation, gives the same
  relation with $I(A)$ as in \eqref{Duistermaat-Singer.4} for all $A$ with
  essential support near the given point. It follows that the
  corresponding point on the diagonal is not in $\WF'(\tilde F-D).$ This however
  proves that $\tilde F$ is \emph{globally} a pseudodifferential operator, since
  it is a well-defined operator with the correct wavefront set relation and
  is microlocally everywhere a pseudodifferential operator. Hence $F$ is a
  Fourier integral operator as claimed.
\end{proof}


\section{Group of invertible Fourier integral operators}\label{Sect:GFIO}


Let us consider in more detail the group, $\GL(\cF^0(Z;V)),$ of invertible
Fourier integral operators of order $0$ on a fixed complex vector bundle
$V$ over a compact manifold $Z.$ The topology on the symbolic quotient of
the algebra of Fourier integral operators is discussed by Adams et al in
\cite{Adams}. The group of Fourier integral operators is shown to be a
Fr\'echet group by Omori in \cite{Omori} and in papers cited there with
Kobayashi, Maeda and Yoshioka. It is useful (although not really exploited
in the present paper for which the results in \cite{Omori} suffice) to have
somewhat stronger results for the group of pseudodifferential operators so we proceed
to discuss the topology in some detail.

We start by examining the group of invertible classical pseudodifferential
operators of order $0,$ denoted $G^0(Z;V)\subset\Psi^0(Z;V).$ This is an
open subset which is a Fr\'echet Lie group, but much more is true. Namely
we show that there is a decreasing sequence of Banach algebras such that
$G^0(Z;V)$ is the projective limit of the corresponding sequence of the
groups of invertibles. The basis of this is the characterization of
pseudodifferential operators by commutation conditions due to R. Beals,
\cite{Beals, Beals77, Beals79}.  Beals was interested in showing that pseudodifferential
operators could be characterized in terms of boundedness properties of
commutators and composites with differential operators. Here we are only
trying to obtain the natural topology on the (group of invertible)
pseudodifferential operators, and then Fourier integral operators, so we
are free to simplify the discussion by using pseudodifferential operators
in the defining properties.

We proceed to define, inductively, a sequence of linear subspaces 
\begin{equation}
\cB_j\subset\cB_{j-1}\dots\subset\cB_0.
\label{27.August.2012.33}\end{equation}
Namely, $A\in\cB_j,$ for $j\in\bbZ,$ provided the following conditions hold.
\begin{enumerate}
\item $A\in\cB_{j-1}.$
\item $A$ restricts to the first Sobolev space, $A(H^1(Z;V))\subset
  H^1(Z;V)$ and if $f\in\CI(Z)$ and $D\in\Psi^1(Z;V)$ then $D[f,A],$
  $[f,A]D\in\cB_{j-1},$ by continuous extension in the first case.
\item If $D\in\Psi^1(Z;V)$ is such that $\sigma _1(D)$ is a scalar symbol,
then $[D,A]\in\cB_{j-1}.$
\item If $D\in\Psi^1(Z;V),$ $E\in\Psi^{-1}(Z;V)$ then $DAE,$
  $EAD\in\cB_{j-1},$ again by continuous extension in the second case.
\end{enumerate}

Then we set $\cG_j=\cB_j\cap \cG_0.$ Although a further refinement is needed
to yield the classical pseudodifferential operators, we first analyze the properties
of these $\cB_j$ and $\cG_j.$ 

\begin{proposition}\label{22b.9.2012.1} Each $\cB_j$ is a Banach algebra
  and $\cG_j$ is a smooth Banach Lie group which is the open subset of
  invertible elements in $\cB_j$ for which the product and inverse maps are
  smooth with respect to the Banach manifold topology.
\end{proposition}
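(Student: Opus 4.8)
The plan is to prove that each $\cB_j$ is a Banach algebra and that $\cG_j$ is a Banach Lie group by induction on $j$, with the base case $j=0$ being the standard fact that $\cB(L^2(Z;V))$ is a Banach algebra and that its invertibles form an open subset on which multiplication and inversion are analytic (hence smooth). For the inductive step I would first equip $\cB_j$ with a norm: the obvious candidate is to take $\|A\|_{\cB_j}$ to be the sum of $\|A\|_{\cB_{j-1}}$, the operator norm of $A$ on $H^1(Z;V)$, and the $\cB_{j-1}$-norms of the finitely many ``test'' operators $D[f,A]$, $[f,A]D$, $[D,A]$, $DAE$, $EAD$ appearing in conditions (2)--(4) --- except that $f$ and $D,E$ range over infinite-dimensional spaces, so the first technical point is to replace these families by a \emph{finite} generating set. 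This is possible because $\CI(Z)$ is generated as an algebra (topologically, and for commutator purposes modulo lower order) by finitely many functions giving a local coordinate system in each chart of a finite atlas, and $\Psi^1(Z;V)$, $\Psi^{-1}(Z;V)$ are finitely generated as modules over $\Psi^0(Z;V)$ by a fixed elliptic operator of order $\pm1$ and its parametrix; one checks the derivation/Leibniz identities
\begin{equation*}
[fg,A]=f[g,A]+[f,A]g,\qquad [DD',A]=D[D',A]+[D,A]D'
\end{equation*}
to see that controlling the test operators for a generating set controls them all, with bounds. With this finite reformulation the norm $\|\cdot\|_{\cB_j}$ is genuinely a norm, and completeness of $(\cB_j,\|\cdot\|_{\cB_j})$ follows from completeness of $\cB_{j-1}$ and of $\cB(H^1)$ together with a routine diagonal/closedness argument: a Cauchy sequence in $\cB_j$ converges in $\cB_{j-1}$ and in $\cB(H^1)$ to the same limit $A$, and each test operator converges in $\cB_{j-1}$, so the limits are forced to be the corresponding test operators of $A$.

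Next I would verify that $\cB_j$ is closed under composition and that $\|AB\|_{\cB_j}\le C\|A\|_{\cB_j}\|B\|_{\cB_j}$ (possibly after rescaling the norm by a fixed constant, which is harmless for the Banach-algebra property). This is again the Leibniz rule: for instance
\begin{equation*}
[f,AB]=[f,A]B+A[f,B],\qquad D[f,AB]=(D[f,A])B+(DA E)(E^{-1}[f,B]\cdot\text{(order 1)})\cdots
\end{equation*}
so that $D[f,AB]$, being a sum of products of a $\cB_{j-1}$ element with a $\cB_j\subset\cB_{j-1}$ element (using that $\cB_{j-1}$ is already an algebra by induction), lies in $\cB_{j-1}$ with the right bound; similarly for the other three types of test operator, where condition (4) in particular is exactly what is needed to make $DABE$ factor as $(DAE)(E^{-1}E\cdot B\cdots)$ through an order-zero middle term. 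One also checks $\cB_j$ is a closed subspace of $\cB_{j-1}$ in the $\cB_{j-1}$-norm is \emph{not} claimed --- rather $\cB_j$ carries its own finer topology --- so the point is purely the submultiplicative estimate.

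Finally, for the group statement: $\cG_j=\cB_j\cap\cG_0$, and I would show it is open in $\cB_j$ and that inversion is smooth. Openness: if $A\in\cG_j$ then $A$ is invertible on $L^2$; since $A$ maps $H^1$ to $H^1$ boundedly and (from condition (4) applied with a fixed elliptic $D\in\Psi^1$) $A$ is also bounded \emph{below} on $H^1$ up to $\cB_{j-1}$-errors, a Neumann series argument shows the $L^2$-inverse $A^{-1}$ again satisfies all of (1)--(4) --- the key identities being $[f,A^{-1}]=-A^{-1}[f,A]A^{-1}$, $[D,A^{-1}]=-A^{-1}[D,A]A^{-1}$, and $D A^{-1} E = -(DA^{-1})(A^{-1}[\,\cdot\,])\cdots$ rearranged so that every factor lies in $\cB_j$ or $\cB_{j-1}$ and the product lands in $\cB_{j-1}$ by the inductive algebra property. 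Thus $A^{-1}\in\cB_j$, so $\cG_j$ is exactly the invertibles of the Banach algebra $\cB_j$, which is automatically open with analytic (hence $\CI$) inversion and multiplication by the standard theory of Banach algebras; this gives the Banach Lie group structure. \textbf{The main obstacle} I anticipate is the bookkeeping in the composition and inversion estimates: the test operators $DAE$ etc.\ do not by themselves form an algebra, and one has to insert the fixed parametrix $E=D^{-1}$ of an elliptic $D\in\Psi^1$ in the right places --- $DABE = (DAE)(DBE)\cdot(\text{correction in }\Psi^0)$ --- and track that the correction terms, which involve commutators $[D,\cdot]$ and $[E,\cdot]$, are themselves absorbed by conditions (2)--(4) at level $j-1$. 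Making the finite-generation reduction precise enough that all these manipulations stay within a \emph{fixed finite} collection of test operators (so that one genuinely gets a norm and not just a family of seminorms indexed by $\CI(Z)\times\Psi^{\pm1}$) is the delicate part; everything else is a routine induction.
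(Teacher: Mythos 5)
Your overall skeleton coincides with the paper's: induct on $j$, reduce the conditions indexed by $D\in\Psi^1$, $E\in\Psi^{-1}$ to a fixed invertible elliptic $Q\in\Psi^1(Z;V)$ (a module argument over $\Psi^0(Z;V)\subset\cB_{j-1}$), localize with a partition of unity subordinate to a finite coordinate cover, define the norm as a finite sum of $\cB_{j-1}$-norms of test operators, prove submultiplicativity by Leibniz identities with $Q^{\pm1}$ inserted, and get the Lie group structure from the general theory of invertibles in a unital Banach algebra (your explicit spectral-invariance step, via $[f,A^{-1}]=-A^{-1}[f,A]A^{-1}$ etc., is a detail the paper leaves implicit, and your sketch of it is acceptable at that level).

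The genuine gap is in the step you yourself flag as delicate: passing from commutators with the finitely many local coordinate functions $x_{k,a}$ back to commutators with \emph{arbitrary} $f\in\CI(Z)$. Your proposed mechanism --- that $\CI(Z)$ is ``generated as an algebra (topologically) by coordinate functions,'' so Leibniz identities propagate the bounds --- does not work: Leibniz only reaches polynomials in the coordinates, a general smooth $f$ is neither a polynomial nor an analytic limit with controllable coefficients, and density of polynomials in the $\CI$ topology is useless here because the hypothesis ``$Q[f,A_a]\in\cB_{j-1}$ for each $f$'' carries no a priori continuity or quantitative bound in $f$; termwise polynomial bounds of the form $C^{|\alpha|}$ do not sum for a non-analytic limit. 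The paper's actual argument is different and is the heart of the proof: set $F(\xi)=Q[e^{ix\cdot\xi},A_a]$, differentiate $F(s\xi)$ in the scaling parameter $s$ to obtain a linear ODE whose coefficients involve only $Q[x_{k,a},A_a]$ and conjugates $Qx\cdot\xi Q^{-1}$, solve it explicitly, and deduce the polynomial bound $\|F(\xi)\|_{j-1}\le C(1+|\xi|)^{N}$; then for $f\in\CIc(U_a)$ write $f$ by Fourier inversion, so that the rapid decay of $\hat f$ beats the polynomial growth and $Q[f,A_a]=\int \hat f(\xi)F(\xi)\,d\xi$ converges in $\cB_{j-1}$ (similarly for $[f,A_a]Q$), with a bound by finitely many seminorms of $f$. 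Without this oscillatory-exponential argument (or a substitute giving quantitative control uniform over a generating family), your finite collection of test operators does not control condition (2), the proposed expression is not known to be a norm dominating the full set of conditions, and the inductive completeness and submultiplicativity arguments cannot be closed. You also omit the off-diagonal pieces $\chi_aA(1-\chi'_a)$, which the paper handles separately via the disjoint-support consequence of condition (2); that is a smaller omission but needed to make the decomposition $A=\sum_a A_a+\sum_a\chi_aA(1-\chi'_a)$ effective.
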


\begin{proof} Certainly $\cB_0$ is a Banach space and $\cG_0$ is a Banach Lie
  group which is the open subset of invertible elements. So it suffices to
  proceed by induction.

By standard properties of pseudodifferential operators, 
\begin{equation}
\Psi^0(Z;V)\subset\cB_j\ \forall\ j.
\label{22b.9.2012.3}\end{equation}
Indeed, all the elements in the defining conditions are then in
$\Psi^0(Z;V)$ so the statement follows by induction. From this it follows
that each $\cB_j$ is a right and left module over $\Psi^0(Z;V)$ which is
used without further comment below.

To simplify the defining conditions of $\cB_j,$ let $Q\in\Psi^1(Z;V)$ be 
elliptic and invertible with diagonal principal symbol.

First we check that $\cB_j$ defined by these conditions in terms of some $\cB_{j-1}$ is
  an algebra if $\cB_{j-1}$ is an algebra. So suppose $A_1,$
  $A_2\in\cB_j\subset \cB_{j-1}$ and consider condition (2). Then
  $A_1A_2\in\cB_{j-1},$ it must restrict to map the Sobolev space
  $H^1(Z;V)$ into itself and
\begin{equation}
D[A_1A_2,f]
=DA_1[A_2,f]+D[A_1,f]A_2=DA_1Q^{-1}Q[A_2,f]+D[A_1,f]A_2\in\cB_{j-1}
\label{22b.9.2012.2}\end{equation}
using (2) and (3) for the factors. Similarly for $[A_1A_2,f]D,$ so (2)
holds for the product. Condition (3) is even simpler and (4) holds for
$A_1A_2$ since 
\begin{equation}
DA_1A_2E=(DA_1Q^{-1})(QA_2E),\ EA_1A_2Q=(EA_1Q)(Q^{-1}A_2D).
\label{22b.9.2012.4}\end{equation}
It follows from an inductive application of this argument that all the
$\cB_j$ are algebras.

At this stage, it is convenient to refine the second condition. Note that
if $f,$ $g\in\CI(M)$ have disjoint supports then $DfAg=D[f,A]g.$ So it
follows from the second condition that
\begin{equation}
fAgD,\ DfAg\in\cB_{j-1}\Mif f,\ g\in\CI(M),\ \supp(f)\cap\supp(g)=\emptyset.
\label{22b.9.2012.6}\end{equation}

To exploit this, choose a finite covering of the manifold by coordinate
patches, $U_a\subset M$ each diffeomorphic to a ball centred at the origin
and with a fixed coordinate system $x_{k,a}\in\CI(U_a),$ $1\le k\le\dim M.$
Then choose a partition of unity $\chi_a\in\CI(M)$ subordinate to the $U_a$
and a second collection of functions $\chi'_a\in\CIc(U_a)\subset\CI(M)$
such that $\chi'_a\equiv1$ in a neighbourhood of $\supp(\chi_a)$ for each
$a.$ This allows any operator $A\in\cB_l$ to be decomposed as
\begin{equation}
A=\sum\limits_{a}A_a+\sum\limits_{a}\chi_aA(1-\chi'_a),\ A_a=\chi_aA\chi'_a,
\label{22b.9.2012.7}\end{equation}
where all terms are in $\cB_l.$ Thus the second condition above defining
$A\in\cB_j,$ where we know that $A\in\cB_{j-1},$ certainly implies that 
\begin{equation}
\begin{gathered}
Q\chi_aA(1-\chi'_a),\ \chi_aA(1-\chi'_a)Q\in\cB_{j-1}\ \forall\ a,\\
Q[x_{k,a},A_a],\ [x_{k,a},A_a]Q\in\cB_{j-1}\ \forall\ a,k.
\end{gathered}
\label{22b.9.2012.8}\end{equation}
The second collection of conditions makes sense since supports are confined
to $U_a.$

Conversely, this finite collection of conditions implies the
second condition above for all $f\in\CI(M).$ Again we prove this by
induction, which in particular will show that we may define a norm
$\|\cdot\|_{j}$ on $\cB_j$ by setting 
\begin{multline}
\|A\|_j=\|A\|_{j-1}+\sum\limits_{a}\|Q\chi_aA(1-\chi'_a)\|_{j-1}+
\sum\limits_{a}\|\chi_aA(1-\chi'_a)Q\|_{j-1}\\
+\sum\limits_{a,k}\|Q[x_{k,a},A_a]\|_{j-1}+
\sum\limits_{a,k}\|Q[x_{k,a},A_a]\|_{j-1}\\
+\|[Q,A]\|_{j-1}+\|QAQ^{-1}\|_{j-1}+\|Q^{-1}AQ\|_{j-1}.
\label{27.August.2012.38}\end{multline}

\begin{lemma}\label{27.August.2012.39} The conditions \eqref{22b.9.2012.8}
  on $A\in\cB_{j-1},$  
  together with the requirements $[Q,A],$ $QAQ^{-1}$ and
  $Q^{-1}AQ\in\cB_{j-1}$ imply that $A\in\cB_j$ and
  \eqref{27.August.2012.38} (defined inductively) is a norm on $\cB_j$ with
  respect to which it is a Banach algebra, so $\|A_1A_2\|_{j}\le
  C_j\|A_1\|_{j}\|A_2\|_{j}$ for all $A_1,$ $A_2\in\cB_j.$
\end{lemma}

\begin{proof} Proceeding by induction we may suppose the result known for
  $\cB_l$ for $l\le j-1.$ Expanding $DfA$ using
\eqref{22b.9.2012.7} the terms arising from the second sum are already in
$\cB_{j-1}$ by the first part of \eqref{22b.9.2012.8} and similarly for
$DAf.$ Thus it is enough to consider the commutators with each of the localized
operators $A_a.$ That is we need to check that \eqref{22b.9.2012.8} implies that
\begin{equation}
Q[f,A_a]\in\cB_{j-1}\ \forall\ f\in\CIc(U_a).
\label{27.August.2012.34}\end{equation}

Consider the commutator with the oscillating exponential in the local
coordinate patch $U_a,$ where the local coordinates in $U_a$ are now denoted
simply by $x:$
\begin{equation}
F(\xi)=Q[e^{ix\cdot\xi},A_a],\ \xi\in\bbR^n,\ n=\dim M.
\label{27.August.2012.35}\end{equation}
By the third assumption, this is a bounded operator from $H^1(Z;V)$ to
$L^2(Z;V)$ and as such is clearly smooth and
\begin{equation}
\frac{d}{ds}F(s\xi)=Q[ix\cdot\xi e^{isx\cdot\xi},A_a]=
Qix\cdot\xi Q^{-1}F(s\xi)+\sum\limits_{k}i\xi_kQ[x_k,A_a]e^{isx\cdot\xi}.
\label{27.August.2012.36}\end{equation}
Since $F(0)=0$ solving this differential equation gives 
\begin{equation}
F(\xi)=e^{iQx\cdot\xi Q^{-1}}
\int_0^1\left(e^{-isQx\cdot\xi Q^{-1}}\sum\limits_{k}i\xi_kQ[x_k,A_a]e^{isx\cdot\xi}\right)ds
\label{27.August.2012.37}\end{equation}

Now, by hypothesis, each of the $Q[x_k,A_a]\in\cB_{j-1}$ and it is
straightforward to check that $e^{isx\cdot\xi}$ and $e^{-isQx\cdot\xi
  Q^{-1}}$ for $s\in[-1,1]$ are continuous maps into $\cB_{j-1}$ with norms which
grow at most polynomially in $|\xi|.$ Thus $\|F(\xi)\|_{j-1}\le
C(1+|\xi|)^{N(j)}.$ Now, expressing $f\in\CIc(U_a)$ in terms of its Fourier
transform, which is a rapidly decreasing function of $\xi,$ it follows by
integration that $Q[f,A_a]\in\cB_{j-1}.$ A similar argument applies to
$[f,A_a]Q$ so the first part of the Lemma follows.

Completeness of $\cB_j$ also follow inductively and the earlier argument
that $\cB_j$ is an algebra extends directly to give the product estimate on
the norm. 
\end{proof}

Note that it is always possible to rescale $\|\cdot\|_j$ by the constant
$C_j$ so that  
\begin{equation}
\|AB\|_j\le\|A\|_j\|B\|_j,\ \forall\ A,B\in\cB_j.
\label{27.August.2012.40}\end{equation}

Returning to the proof of the Proposition, we now show that 
\begin{equation}
\bigcap_j\cB_j=\Psi^0_{\infty}(Z;V)
\label{27.August.2012.41}\end{equation}
is the larger algebra of pseudodifferential operators `with symbols
relative to $L^\infty$' but not necessarily classical. This is essentially
the content of Beals' result, so we only briefly recall the argument.

So, suppose $A\in\cB_j$ for all $j.$ We know that the conditions
\eqref{22b.9.2012.6} apply inductively to show that $Q^pfAg$ and $fAgQ^p$
are bounded on $L^2(Z;V)$ for all $p$ if $f,$ $g\in\CI(Z)$ have disjoint
supports. This simply means that the Schwartz kernel of $A$ is smooth away
from the diagonal. Thus it suffices to show that each of the $A_a=\chi
A\chi'_a$ is a pseudodifferential operator in the local coordinate chart
$x_{k,a}$ which we can denote by $x_k.$ Since the kernel of $A_a$ now has
compact support in the coordinate patch it can be written in oscillatory
integral form  
\begin{equation*}
A_a=(2\pi)^{-n}\int e^{i(x-x')\cdot\xi}e(x,\xi)d\xi
\label{27.August.2012.42}\end{equation*}
where $e$ is smooth in $\xi$ and possibly a distribution in $x.$ The
commutation conditions show that the operator obtained by replacing
$e(x,\xi)$ by 
\begin{equation}
\xi^\gamma D_x^\alpha D_\xi^\beta e(x,\xi),\ |\gamma|\le|\beta|
\label{27.August.2012.43}\end{equation}
is also bounded on $L^2.$ Now following \cite{Beals} this shows
that $A_a$ is a pseudodifferential operator of some fixed order in the
sense that 
\begin{equation}
|D_x^\alpha D_\xi^\beta e(x,\xi)|\le C_{\alpha ,\beta }(1+|\xi|)^{N-\beta }
\label{27.August.2012.44}\end{equation}
but then boundedness on $L^2$ implies that \eqref{27.August.2012.44} holds with $N\le 0.$
This completes the argument in the case of `pseudodifferential operators
with bounds'.

To refine the argument to give \emph{classical} pseudodifferential
operators we need to add an extra condition to the iterative definition of
$\cB_j\subset\cB_{j-1}.$ Namely in the local expression
\eqref{27.August.2012.42} for $A_a$ we need to ensure that $e(x,\xi)$ has
an asymptotic expansion in terms of decreasing integral homogeneity. This
follows from the same type of estimates \eqref{27.August.2012.44} with
$N=0$ on the iteratively differentiated amplitude:-
\begin{equation}
|D_x^\alpha
D_\xi^\beta\left(q(x,\xi)^L\left(\prod_{p=0}^L(\xi\cdot\pa_{\xi}+p)\right)e(x,\xi)\right)|\le
C_{\alpha ,\beta }(1+|\xi|)^{-|\beta|}
\label{27.August.2012.45}\end{equation}
for all $L,$ where $q$ is an elliptic symbol.

This can be arranged by appending the conditions 
\begin{equation}
\cR(A)=Q\sum\limits_{a}\sum\limits_{k}D_{x_{k,a}}[x_{j,a},A_a]\in\cB_{j-1}.
\label{27.August.2012.46}\end{equation}
Again these conditions are independent of the partition of unity
(subordinate to a coordinate cover) used. 

\begin{lemma}\label{27.August.2012.49} For any partition of unity
  subordinate to a coordinate covering, \eqref{27.August.2012.46} gives a
  map 
\begin{equation}
\cR:\Psi^0(Z;V)\longmapsto \Psi^0(Z;V).
\label{27.August.2012.50}\end{equation}
Conversely, if $A\in\Psi^0_{\infty}(Z;V)$ is a pseudodifferential operator
`with bounds' then for any $l\in\bbN,$
\begin{equation}
\cR^l(A)\in\Psi^0_{\infty}(Z;V)\Longrightarrow
A=A'+A'',\ A'\in\Psi^0(Z;V),\ A''\in\Psi^{-l}_{\infty}(Z;V).
\label{27.August.2012.51}\end{equation}
\end{lemma}

\begin{proof} From the basic properties of pseudodifferential operators 
\begin{equation*}
\cR(A):\Psi^0_{\infty}(Z;V)\longrightarrow \Psi^1(Z;V).
\label{27.August.2012.56}\end{equation*}
Moreover, 
\begin{equation}
\sigma _1(\cR(A))=\frac1 iq(x,\xi)\sum\limits_{k} \xi_k\pa_{\xi_k}\sigma
_0(A)=\frac1i qR\sigma _0(A)
\label{27.August.2012.52}\end{equation}
is given by the radial vector field on $T^*Z$ applied to the symbol (which
is well-defined even though the action is on a vector bundle since the
bundle is lifted from $Z).$ Thus, if $A$ is classical then $\sigma _0(A)$
is represented by a function which is homogeneous of degree $0$ and hence
$\sigma _1(\cR(A))=0,$ modulo symbols of order $-1,$ and
\eqref{27.August.2012.50} follows.

Conversely, if $A\in\Psi^0_{\infty}(Z;V)$ and
$\cR(A)\in\Psi^0_{\infty}(Z;V)$ then, since $Q$ is elliptic, $R\sigma
_0(A)$ is a symbol of order $-1$ and \eqref{27.August.2012.51} holds for
$l=1$ by radial integration. Proceeding inductively, so assuming
\eqref{27.August.2012.51} for $l\le p-1,$
\begin{equation}
\cR^p(A)\in\Psi^0_{\infty}(Z;V)\Longrightarrow
\cR(A)=B'+B'',\ B'\in\Psi^0(Z;V),\ B''\in\Psi^{-p+1}_{\infty}(Z;V).
\label{27.August.2012.53}\end{equation}
From the first part of the Lemma, $\cR^{p}(B')\in\Psi^0(Z;V)$ so 
\begin{equation}
\cR^{p-1}(B'')\in\Psi^0_{\infty}(Z;V).
\label{27.August.2012.54}\end{equation}
Now, since $B''$ is of order at most $-p+1$ it follows directly that
$\cR^p(B'')$ is of order $1$ and its principal symbol can be computed
directly in terms of the radial vector field $R$ on $T^*Z$ and the
principal symbol $q$ of $Q:$
\begin{equation}
i^{-p}(qR)^pb''=i^{-k}q^p(R+p-1)(R+p-2)\cdots Rb'',\ b''=\sigma _{-p+1}(B'').
\label{27.August.2012.55}\end{equation}
Thus the iterative condition implies that the leading symbol of $B''$ is
homogeneous of degree $-p+1,$ modulo symbols of order $-p,$ and the
inductive hypothesis follows for $l=p.$ This completes the proof of
\eqref{27.August.2012.51}. 
\end{proof}

Finally then it follows that the classical algebra $\Psi^0(Z;V)$ is the
projective limit of the Banach algebras obtained by appending
\eqref{27.August.2012.46} to the inductive definition of $\cB_j$ and adding
corresponding terms to the norm $\|\cdot\|_j.$

This completes the characterization of the Fr\'echet topology on
$\Psi^0(Z;V)$ and shows that $G^0(Z;V)$ is indeed the intersection of the
group of invertible elements.
\end{proof}

As noted above, this characterization of $G^0(Z;V)$ as a projective limit
of smooth Banach groups is stronger than the earlier descriptions in the
literature. The well-known fact that the Lie algebra of $G^0(Z;V)$ is
$\Psi^0(Z;V)$ follows easily. It is also the case that the exponential map
from the Lie algebra $\Psi^0(Z;V)$ to $G^0(Z;V)$ is a smooth isomorphism of
a neighbourhood of $0$ to a neighbourhood of the identity. This can be seen
from the holomorphic functional calculus. Namely, if $B(\Id,\frac14)$ is the
ball, in terms of the $L^2$ operator norm, around the identity in
$G^0(Z;V)$ then for each $A\in B(\Id,\frac14),$ $\log(A)\in\cB_0$ can be
represented in terms of a contour integral around $|z-1|=\ha$ of
the resolvent family. Since the latter is necessarily a map into
$\Psi^0(Z;V)$ it follows directly that $\log A\in\Psi^0(Z;V)$ and that
$\exp(\log A)=A.$ Thus the group $G^0(Z;V)$ behaves 
in many ways like a Banach Lie group.

Now, we pass to the more complicated group $\GL(\cF^0(Z;V))$ of invertible
Fourier integral operators. First consider the subgroup $\GL_{\dag}(\cF^0(Z;V))$
corresponding to canonical transformations which are in the connected
component of the identity, $\Can_0(Z).$ This subgroup gives a 
short exact sequence of topological groups, which is shown in Lemma~\ref{27.August.2012.48}
to be a principal bundle,
\begin{equation}
\xymatrix{
G^0(Z;V)\ar@{-}[r]&\GL_{\dag}(\cF^0(Z;V))\ar[d]\\
&\Can_0(Z).
}
\label{27.August.2012.47}\end{equation}
Omori in \cite{Omori}, see also \cite{Michor}, discusses the diffeomorphism
group of a compact manifold and this discussion applies to the contact
group to give a `projective (inverse) limit Hilbert' structure on
$\Can_0(Z)$ arising from the completion of the group with respect to
Sobolev topologies (of order tending to infinity).

\begin{lemma}\label{27.August.2012.48} The group $\GL_{\dag}(\cF^0(Z;V)),$
  corresponding to \eqref{27.August.2012.47}, in the natural Fr\'echet topology on
  the kernels as Lagrangian distributions, is a principal $G^0(Z;V)$ bundle
  as in \eqref{27.August.2012.47} over $\Can_0(Z).$
\end{lemma}

\begin{proof} As in \cite{Adams} it is straightforward to construct a
  section of \eqref{27.August.2012.47} near the identity in the
  base. Multiplication and inversion are contunuous (in fact smooth) in the
  resulting Fr\'echet topology near the identity and this allows the
  topolgy to be extended to the rest of $\GL_{\dag}(\cF^0(Z;V))$ -- see also
  Appendix~A.
\end{proof}

The topology then extends to other components (of $\Can(Z))$ in the
same way. Note that in general it is not clear that the map from
$\GL(\cF^0(Z;V))$ to $\Can(Z)$ is surjective, since there may be an
index obstruction to the invertibility of a Fourier integral operator
corresponding to a given canonical transformation and it may not, for a
particular $V,$ be possible to cancel this obstruction by composition with
an elliptic pseudodifferential operator in $\Psi^0(Z;V)$ with the opposite
index (if the index map for pseudodifferential operators on $V$ is not surjective).

We are also interested in the projective quotient of this group
\begin{equation}
\PGL(\cF^\bbC(Z;V))=\GL(\cF^\bbC(Z;V))/\bbC^*\Id.
\label{RW.76}\end{equation}
Various of its normal subgroups and their projective images will play an
important role in the subsequent discussion of the index map.

For an elliptic operator the order is unambiguously determined so defines
an additive homomorphism giving short exact sequences 
\begin{equation}
\begin{gathered}
\xymatrix{
\GL(\cF^0(Z;V))\ar[r]\ar[dd]&\GL(\cF^\bbC(Z;V))\ar[dr]\ar[dd]\\
&&\bbC.\\
\PGL(\cF^0(Z;V))\ar[r]&\PGL(\cF^\bbC(Z;V))\ar[ur]
}
\end{gathered}
\label{RW.77}\end{equation}
Surjectivity here follows from the existence of an invertible
pseudodifferential operator of order $s$ for any $s\in\bbC.$ 

Since each operator is elliptic the Schwartz kernel determines the contact
transformation $\chi,$ since its wavefront set must be equal to the twisted
graph of the associated canonical transformation. The same is true of the
projective group, since operators identified in the quotient are all
elliptic.  Thus there are well-defined homomorphisms with the
pseudodifferential operators mapping to the identity diffeomorphism so
giving sequences
\begin{equation}
\begin{gathered}
\xymatrix{
\GL(\Psi^0(Z;V))\ar[r]\ar[dd]&\GL(\cF^0(Z;V))\ar[dr]\ar[dd]\\
&&\Con(S^*Z)\\
\PGL(\Psi^0(Z;V))\ar[r]&\PGL(\cF^0(Z;V))\ar[ur]
}
\end{gathered}
\label{RW.33}\end{equation}
where $\Con(S^*Z)$ is the group of contact diffeomorphisms of the cosphere
bundle $S^*Z$ and there is a similar diagram for the operators of general
complex order.

\begin{lemma}\label{RW.78} The horizontal sequences in \eqref{RW.33} (and
  the similar ones for general complex order) are exact provided
  $\Psi^0(Z;V)$ contains an operator of index $1.$
\end{lemma}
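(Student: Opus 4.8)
The plan is to check exactness of the horizontal sequences in \eqref{RW.33} at each of the three spots, for both the linear and the projective rows. Most of the content is already supplied by Theorem~\ref{thm:DS} and the general theory of Fourier integral operators; the index hypothesis enters only in one place, namely surjectivity onto $\Con(S^*Z)$ in the lower row (equivalently onto $\Con_0$ after passing through $\Can$). I would organize the argument around the statement that the map $\GL(\cF^0(Z;V))\longrightarrow \Con(S^*Z)$ (and its complex-order analogue) has image and kernel as claimed, and then deduce the projective version by a diagram chase through the vertical quotients by $\bbC^*\Id$.

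First, exactness at $\GL(\Psi^0(Z;V))$: this is the statement that $\GL(\Psi^0(Z;V))\to\GL(\cF^0(Z;V))$ is injective, which is immediate since it is an inclusion of groups of invertible operators. Second, exactness at $\GL(\cF^0(Z;V))$: an element of $\GL(\cF^0(Z;V))$ maps to the identity in $\Con(S^*Z)$ exactly when its associated contact transformation is the identity, i.e.\ its wavefront set lies in the twisted graph of the identity, which is the conormal to the diagonal; by the characterization of Fourier integral operators associated to the identity this means precisely that the operator is a classical pseudodifferential operator of order $0$, hence lies in $\GL(\Psi^0(Z;V))$. (The inclusion of $\GL(\Psi^0(Z;V))$ in that kernel is the trivial direction.) Third, and this is where the hypothesis is used: surjectivity of $\GL(\cF^0(Z;V))\to\Con(S^*Z)$. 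Given a contact diffeomorphism $\chi$, the standard theory produces an elliptic Fourier integral operator $F\in\cF^0(\chi;V)$, with an essential inverse $H\in\cF^0(\chi^{-1};V)$ so that $HF=\Id+R$ with $R\in\Psi^{-1}(Z;V)$; the obstruction to correcting $F$ to an honest invertible operator is the index of the elliptic operator $F$, an element of $\bbZ$ under the identification of the index with the Fredholm index on $L^2$ sections. Composing $F$ with an elliptic $P\in\Psi^0(Z;V)$ changes this index by $\ind P$, so if $\Psi^0(Z;V)$ contains an operator of index $1$ we can arrange $\ind(FP)=0$; a zero-index elliptic Fourier integral operator is invertible modulo smoothing, and a further compact (indeed smoothing) perturbation inside $\cF^0$ makes it genuinely invertible without changing $\chi$. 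Hence the map is onto.

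For the lower (projective) row I would run the same three checks after quotienting by the central $\bbC^*\Id$, using the vertical short exact sequences of \eqref{RW.33}. Injectivity of $\PGL(\Psi^0(Z;V))\to\PGL(\cF^0(Z;V))$ follows from injectivity upstairs together with the fact that $\bbC^*\Id\subset\Psi^0(Z;V)$, so the central subgroups match up; exactness at the middle term follows because the kernel of $\PGL(\cF^0(Z;V))\to\Con(S^*Z)$ is the image of the kernel upstairs (the map to $\Con(S^*Z)$ factors through the projective quotient since $\bbC^*\Id$ maps to the identity contact transformation), and that kernel is $\PGL(\Psi^0(Z;V))$ by the computation above; surjectivity onto $\Con(S^*Z)$ is inherited from the linear row. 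The case of general complex order $s$ is identical once one notes that composing with an invertible element of $\Psi^s(Z;V)$ (which exists for every $s\in\bbC$, cf.\ the discussion around \eqref{RW.77}) reduces any contact transformation realized at order $s$ to one realized at order $0$, and conversely.

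The main obstacle is the surjectivity step and, inside it, the precise bookkeeping of the index obstruction: one must be careful that "index $1$" refers to the correct invariant — the Fredholm index of an associated elliptic operator acting on $L^2(Z;V)$ — and that composition of Fourier integral operators associated to $\chi$ and $\chi^{-1}$ adds indices additively, so that a single index-one pseudodifferential operator suffices to kill the obstruction. The existence of the elliptic $F\in\cF^0(\chi;V)$ itself, and the passage from "invertible modulo smoothing with vanishing index" to "genuinely invertible" by a smoothing correction, are standard and I would cite them rather than prove them; the point worth spelling out is only that the smoothing correction does not disturb the wavefront set, hence not the associated contact transformation, so one stays over the prescribed point of $\Con(S^*Z)$.
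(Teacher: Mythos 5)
Your proposal is correct and follows essentially the same route as the paper: the only nontrivial point is surjectivity onto $\Con(S^*Z)$, which the paper likewise handles by taking an elliptic Fourier integral operator associated to the given contact transformation, composing with the index-one pseudodifferential operator (or its parametrix) to reach index zero, and then perturbing by a smoothing operator to obtain invertibility. Your additional verifications (kernel equals the invertible pseudodifferential operators, the projective quotient, and the reduction of complex order to order zero via invertible operators in $\Psi^s$) are the routine parts the paper leaves implicit.
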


\begin{proof} There are certainly elliptic Fourier integral operators
  associated to any canonical transformation. Composition with an elliptic
  Fourier pseudodifferential operator of order $0$ shifts the index so if
  there is such an operator of index $1$ repeated composition with either
  this operator or its parametrix gives a Fourier integral operator of
  index $0$ which can then be perturbed by a smoothing operator to be invertible.
\end{proof}

\begin{rems}
Note that there is always an elliptic operator in   $\Psi^0(Z;V)$ of index equal to 1
when the rank of $V$ is in the stable range, $\rank(V)>\dim(Z)/2$ (see \cite{AS1}).

\end{rems}


\section{Bundles of pseudodifferential operators}\label{sect:pdobundle}


The main object of study in this paper is a bundle of algebras with typical fiber the
pseudodifferential operators acting on sections of a vector bundle over a
fixed compact manifold.

\begin{definition}\label{RW.29} A \emph{(filtered) bundle of
    pseudodifferential algebras,} $\bfPsi^\bbZ,$ over a manifold, $X,$ is a
  fiber bundle which is a Fr\'echet manifold with typical fibre the algebra
  $\Psi^{\bbZ}(Z;V)$ for some fixed compact manifold $Z$ and vector bundle
  $V.$ That is, $\bfPsi^\bbZ$ is equipped with a surjective smooth map
\begin{equation}
p:\bfPsi^\bbZ\longrightarrow X
\label{RW.22}\end{equation}
the fibres of which are $\bbZ$-filtered algebras and such that any point of
$X$ has an open neighbourhood $U$ on which there is a smooth bijection
\begin{equation} f_U:U\times \Psi^{\bbZ}(Z;V)\longrightarrow p^{-1}(U)
  \label{RW.24}\end{equation} reducing $p$ to projection onto the first
factor and which is an order-preserving isomorphism of algebras at each
point.
\end{definition}

Let $U_i$ be an open cover of $X$ by such trivializations. The assumption
of smoothness on the bundle means it has local smooth sections through any
point and it follows from the result of Duistermaat and Singer, in the form
of Theorem~\ref{thm:DS} above, see also Remark \ref{PostArx.4} following the statement,
that the transition functions
\begin{equation}
g_{ij} : U_i \cap U_j \longrightarrow \PGL(\cF^\bbC(Z;V)),
\label{RW.26}\end{equation}
are smooth maps into $\PGL(\cF^\bbC(Z;V))$ as a Fr\'echet Lie group as
discussed in Lemma~\ref{27.August.2012.48}. 

The transition functions in equation \eqref{RW.26} satisfy the cocycle condition,
\begin{equation}
g_{ij} g_{jk} g_{ki} =1, \qquad g_{ij}=g_{ji}^{-1}, \qquad g_{ii}=1.
\end{equation}
 The principal bundle, ${\bf F},$ associated to
$\bfPsi^\bbZ,$ is modelled on the Fr\'echet Lie group of projective
invertible Fourier integral operators, $\PGL(\cF^\bbC(Z;V)),$ formed
from the algebra of Fourier integral operators of complex order on $Z$ with
coefficients in the complex vector bundle $V$ discussed above.

\begin{proposition}\label{RW.25} The structure group of any filtered bundle
  of pseudodifferential algebras can be reduced to the Fr\'echet group
  $\PGL(\cF^0(Z;V))$ corresponding to invertible elements in the algebra of
  Fourier integral operators of order $0$ rather than general complex
  order.
\end{proposition}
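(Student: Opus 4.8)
The plan is to exhibit an explicit cocycle equivalent to $\{g_{ij}\}$ which takes values in $\PGL(\cF^0(Z;V))$. Each transition function $g_{ij}:U_i\cap U_j\longrightarrow \PGL(\cF^\bbC(Z;V))$ has a well-defined order, which by the order homomorphism in \eqref{RW.77} (the map to $\bbC$) is a locally constant, hence constant on connected components, function; call its value $s_{ij}\in\bbC$ on a fixed connected component of $U_i\cap U_j$. Since the $g_{ij}$ satisfy the cocycle condition and the order map is a homomorphism, $\{s_{ij}\}$ is an additive $\bbC$-valued \v{C}ech cocycle; but $\bbC$ is a fine sheaf (a contractible, in particular fine, coefficient group admitting partitions of unity), so $H^1(X;\underline{\bbC})=0$ and there exist locally constant (indeed smooth) functions $s_i:U_i\longrightarrow\bbC$ with $s_{ij}=s_i-s_j$.

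Next I would use these $s_i$ to conjugate the cocycle. Fix, once and for all, an invertible pseudodifferential operator $Q\in\Psi^1(Z;V)$ which is elliptic with diagonal principal symbol (such $Q$ exists), and for each $s\in\bbC$ let $Q^s\in G^s(Z;V)\subset\GL(\cF^s(Z;V))$ be its complex power in the sense of Seeley; this is a family depending holomorphically on $s$, and $Q^s$ descends to an element of $\PGL(\cF^s(Z;V))$ of order $s$. Over $U_i$ set $h_i=Q^{s_i}$, regarded as a (possibly $U_i$-dependent, via $s_i$) element of $\PGL(\cF^{\bbC}(Z;V))$, and define the new transition functions
\begin{equation}
g'_{ij}=h_i^{-1}\,g_{ij}\,h_j=Q^{-s_i}\,g_{ij}\,Q^{s_j}.
\label{newcocycle}\end{equation}
These manifestly still satisfy the cocycle condition (conjugation of a cocycle by a $0$-cochain), and they define a bundle isomorphic to $\bfPsi^\bbZ$: the point is that $h_i$ acts by an order-preserving algebra automorphism of $\Psi^{\bbZ}(Z;V)$ (conjugation by $Q^{s_i}$ preserves each $\Psi^m(Z;V)$), so modifying the local trivializations $f_{U_i}$ by $h_i$ changes the cocycle from $\{g_{ij}\}$ to $\{g'_{ij}\}$ without changing the bundle. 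Now the order of $g'_{ij}$ is $-s_i+s_{ij}+s_j=-s_i+(s_i-s_j)+s_j=0$, so each $g'_{ij}$ takes values in the subgroup $\PGL(\cF^0(Z;V))$ of projective invertible Fourier integral operators of order $0$. Smoothness of $\{g'_{ij}\}$ follows from smoothness of $\{g_{ij}\}$, smoothness of $s_i$, and the fact that $s\mapsto Q^s$ is a smooth (holomorphic) map into the Fr\'echet group $\PGL(\cF^\bbC(Z;V))$, using the topology on this group discussed in Section~\ref{Sect:GFIO}; hence the structure group has been reduced to $\PGL(\cF^0(Z;V))$, as claimed.

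The main obstacle is the bookkeeping for the complex powers $Q^s$: one must check that $s\mapsto Q^s$ is genuinely smooth into the Fr\'echet group of invertible Fourier integral operators of the appropriate order (which is the content of Seeley's construction together with the topological statements of Section~\ref{Sect:GFIO}), and that it is compatible with passing to the projective quotient. A secondary point to verify is that the $s_i$ may be chosen locally constant (this is automatic since $\{s_{ij}\}$ is locally constant and $X$ is, without loss of generality for the \v{C}ech argument, covered by connected opens); with that in hand the conjugating operators $h_i$ are honest elements of $\PGL(\cF^\bbC(Z;V))$ locally on $U_i$ and the verification that $\{g'_{ij}\}$ is again a smooth cocycle is routine. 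Everything else is formal cohomology of a fine sheaf plus the observation that conjugation by a complex power of an elliptic operator is an order-preserving automorphism.
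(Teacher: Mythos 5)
Your construction is the same as the paper's: realize the transition maps as Fourier integral operators of order $s_{ij}(x)$, split the additive order cocycle as $s_{ij}=s_i-s_j$, and modify the trivializations by Seeley complex powers $Q^{\pm s_i}$ so that the new cocycle $g'_{ij}=Q^{-s_i}g_{ij}Q^{s_j}$ has order $0$. That is exactly the argument in the paper, so the core of your proposal is correct.

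Two of your side justifications are wrong, though, and should be repaired even if they do not sink the construction. First, the order $s_{ij}$ is \emph{not} locally constant in general: the order homomorphism of \eqref{RW.77} takes values in the connected group $\bbC$, so continuity gives no discreteness, and a smooth family such as $x\mapsto Q^{s(x)}$ has smoothly varying order. The paper accordingly treats $s_{ij}:U_i\cap U_j\to\bbC$ as smooth functions. Second, your cohomological reason for splitting is misstated: the \emph{constant} sheaf $\underline{\bbC}$ is not fine, and $H^1(X;\underline{\bbC})\cong H^1(X;\bbC)$ need not vanish. What vanishes is $H^1$ with coefficients in the fine sheaf of smooth $\bbC$-valued functions; concretely one just sets $s_i=\sum_{k}\phi_k s_{ik}$ for a partition of unity $\{\phi_k\}$, as the paper does. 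In particular your "secondary point" that the $s_i$ may be chosen locally constant is false in general (that would require the class of $\{s_{ij}\}$ in $H^1(X;\bbC)$ to vanish). None of this matters for the conclusion: with smooth $s_i$ the conjugating family $Q^{s_i(x)}$ is a smooth family of invertible operators of varying order (Seeley), the modified transition maps are smooth and of order $0$, and the reduction of the structure group to $\PGL(\cF^0(Z;V))$ goes through exactly as you wrote it.
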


\begin{proof} As shown originally by Seeley, the complex powers of a
  positive differential operator of second order, which always exists, form
  an entire family of pseudodifferential operators $Q^s\in\Psi^s(Z;V)$
  where $Q^2$ is the original differential operator.

Take a covering of the base $X=\bigcup_jU_j$ by open sets over each of
which the given bundle of pseudodifferential operators $\bfPsi^\bbZ$ is
trivial, with $f_i:U_i\longrightarrow p^{-1}(U_i)$ the associated
trivializations. Thus the transition maps \eqref{RW.26} can be realized by
smooth maps $U_i\cap U_j\ni x\longmapsto \tilde g_{ij}(x)\in
\cF^{s_{ij}(x)}(Z;V)$ where the $s_{ij}:U_i\cap U_j\longrightarrow \bbC$
are smooth maps which necessarily satisfy the (additive) cocycle
condition. Such a cocycle is necessarily trivial, so there exist smooth
maps $s_i:U_i\longrightarrow \bbC$ such that $s_{ij}=s_i-s_j$ on $U_i\cap
U_j;$ for instance using a partition of unity subordinate to the cover one
can take $s_i=\sum\limits_{k\not=i}\phi_k s_{ik}.$

Now, replace the trivializations $f_i$ by $f'_i(x)=f_i(x)Q^{-s_i(x)}.$ The
corresponding transition maps are
\begin{equation}
g'_{ij}=Q^{s_j(x)}f_j^{-1}(x)f_i(x)Q^{-s_i(x)}\in \cF^0(Z;V).
\label{RW.28}\end{equation}
This gives the desired reduction of the structure group.
\end{proof}

Thus a filtered bundle of pseudodifferential operators gives rise to an
isomorphism class of principal bundles with structure group
$\PGL(\cF^0(Z;V)).$ We will assume from this point onwards that some choice
of this principal bundle has been made, using Proposition~\ref{RW.25}. All
the results below are independent of this choice.

Then $\bfPsi^{\bbZ}$ is an associated bundle, corresponding to the adjoint
action and it has a well-defined smoothing subbundle
$\bfPsi^{-\infty}$ corresponding to the action on smoothing operators. Since
the elements of $\cF^0(Z;V)$ are bounded operators on the Hilbert space
$H=L^2(Z;V),$ the principal bundle can be extended to a bundle with
structure group $\PGL(H)$, and then reduced to a bundle with structure
group $\PU(H)$, since the quotient $\PGL(H)/\PU(H)$ is
contractible. Correspondingly the smoothing bundle $\bfPsi^{-\infty}$ is
then a subbundle of the associated bundle of compact operators, which is an
Azumaya bundle over $X$ in the sense of \cite{MMS1, MMS2}. The image in
$\bfH^3(X;\bbC)$ of the Dixmier-Douady class, itself in $\bfH^3(X;\bbZ),$
classifying this (completed) Azumaya bundle is computed below in
`Chern-Weil' form from the bundle.

For completeness, we include the \v Cech definition of the Dixmier-Douady invariant here.

\begin{definition}\label{def-Cech-DD}
Let $g_{ij}: U_i\cap U_j
\longrightarrow \PGL(\cF^0)$ denote the transition functions  of
${\bf F}$ with respect to some good open cover $\{U_i\}$ of $X.$ Let 
$\hat g_{ij}: U_i\cap U_j \longrightarrow \GL(\cF^0)$ denote a lift of $g_{ij}.$ 
Since $g_{ij}$ satisfies the cocycle identity,
$$
\hat g_{ij} \hat g_{jk} \hat g_{ki} = c_{ijk} \Id: U_i\cap U_j \cap U_k \longrightarrow \bbC^*
$$
is a \v Cech 2-cocycle on $X$, that is, $[c_{ijk}] \in \bfH^2(X;
\underline{\bbC^*})\cong \bfH^3(X; \bbZ).$ This is the {\em Dixmier-Douady
  invariant} of ${\bf F} $ and is independent of the trivialization (and
the reduction to order $0)$.
\end{definition}

The Dixmier-Douady invariant measures the failure of the bundle $\bfPsi^\bbZ$
to be a bundle of operators. It vanishes, as shown below, if and only if
${\bf F}$ lifts to a principal $\GL(\cF^0(Z;V))$ bundle
${\widehat{\bf F}}.$ In this case there is an associated bundle of
Fr\'echet spaces over $X,$
$$
\bCI={\widehat{\bf F}} \times_{\GL(\cF^0(Z;V))} \CI(Z;V)\longrightarrow  X,
$$
and $\bfPsi^{\bbZ}$ is a bundle of operators on $\bCI.$ In fact the associated bundle
of smoothing operators can then be realized as the smooth completion of the
tensor product
$$
\bfPsi^{-\infty} = {\widehat{\bf F}} \times_{\GL(\cF^0(Z;V))}
\CI(Z^2;V\boxtimes V'\otimes\Omega_L)\longrightarrow X,
$$
of $\bCI$ and the analogous bundle of sections of the dual bundle tensored
with the density bundle, to which the adjoint action of ${\bf F}$ extends.

The symbol of a Fourier integral operator of order $0$ (associated to a
canonical diffeomorphism) can be identified with an invertible section of
the pull back to the cosphere bundle of $\hom(V)$ tensored with the
(pull-back of) the Maslov bundle $\cL_\chi$. A non-vanishing constant multiple has the
same canonical transformation so this gives a projective symbol map which
is multiplicative under left composition with the group of projective pseudodifferential
operators
\begin{equation}
\xymatrix{
\PGL(\Psi^0(Z;V))\ar@{-}[r]\ar[d]^-{\sigma}&\PGL(\cF^0(Z;V)) \ar[d]^-{\sigma}\\
\CI(S^*Z;\Aut(V)\ar@{-}[r]&\CI(S^*Z;\pi^*{\rm Aut}(V)\otimes \cL_\chi),
}
\label{RW.31}\end{equation}
where the image group is discussed further below.

Through this action there is an associated bundle which is a
finite-dimensional manifold, the {\em twisted (fibre) cosphere bundle} as the
associated fibre bundle,
$$ 
{\bf S}= {\bf F}\times_{\PGL(\cF^\bullet(Z;V))} S^*Z \stackrel{\hat \pi}{\longrightarrow} X,
$$
where the action of $\PGL(\cF^\bullet(Z;V))$ on $S^*Z$ is via $p\circ \chi.$ 
The bundle ${\bf S}$ has typical fibre $S^*Z.$

We next show that there exist  twisted (fibre) cosphere bundles that are not 
 (fibre) cosphere bundles of fibre bundles. More precisely, 
any smooth fiber bundle of compact manifolds
\begin{equation}
\xymatrix{
Z\ar[r]&Y\ar[d]^\phi\\
& X,
}
\end{equation}
has an associated fibre bundle with typical fibre $S^*Z$,
\begin{equation}
\xymatrix{
S^*Z\ar[r]&{\bf S}\ar[d]^{\hat\phi}\\
& X.
}
\end{equation}

\begin{rems}\label{Contact}
Let $\theta$ denote the contact 1-form on $S^*Z$. Then recall that the contact diffeomorphism group is defined as,
$$
\Con(S^*Z) = \left\{(f, e^h) \in {\rm Diffeo}(S^*Z) \times \CI(S^*Z; \RR^+) \big| f^*\theta = e^h \theta\right\}.
$$
The strict contact diffeomorphism group is defined as,
$$
S\Con(S^*Z) = \left\{f \in {\rm Diffeo}(S^*Z)  \big| f^*\theta =  \theta\right\}.
$$
$S\Con(S^*Z) \hookrightarrow \Con(S^*Z)$ is a subgroup, where $f \mapsto (f,1)$. 
There is a surjective map $\Con(S^*Z) \twoheadrightarrow \CI(S^*Z; \RR^+)$ given by 
$(f, e^h) \mapsto e^h$.
%
%
Therefore we have a principal fibration $S\Con(S^*Z) \hookrightarrow \Con(S^*Z) \twoheadrightarrow \CI(S^*Z; \RR^+), $ where the base
is contractible, and hence it is trivialisable, $\Con(S^*Z) \cong S\Con(S^*Z) \times \CI(S^*Z; \RR^+)$. We conclude that 
$\Con(S^*Z)$ and 
$S\Con(S^*Z)$ are homotopy equivalent.
\end{rems}


{{Let $\Sigma_g$ be a compact Riemann surface of genus $g>1$ with hyperbolic metric of 
constant sectional curvature equal to $-1$, and let $\omega$ denote
the volume form of $\Sigma_g$. By the Gauss-Bonnet theorem, one has
\begin{equation}
\int_{\Sigma_g} \frac{\omega}{2\pi} = 2g-2 \in  \ZZ\setminus\{0\}.
\end{equation}
That is, $\frac{\omega}{2\pi} $ is an integral 2-form representing the 1st Chern class $c_1(S^*Z)$ of the 
cosphere bundle $S^*\Sigma_g$, which is a principal circle bundle over $\Sigma_g$. By \cite{Ban78} and Remarks \ref{Contact},  
the group  $\Con_0(S^*\Sigma_g)$ is homotopy equivalent to $S^1$. By \cite{Earle-Eells69,Gramain73}, 
the group ${\rm Diff}^+(\Sigma_g)$ is homotopy equivalent to the discrete Modular group, ${\rm Mod}_g$. 
It follows that the groups ${\rm Diff}^+(\Sigma_g)$ and  $\Con_0(S^*\Sigma_g)$ and {\em not} homotopy equivalent.
Since $\pi_1(\Con(S^*\Sigma_g)) =\pi_1(\Con_0(S^*\Sigma_g))
 \cong \ZZ$, there are infinitely many topologically distinct principal $\Con(S^*\Sigma_g)$ bundles over $S^2$, 
 and therefore  infinitely many topologically distinct twisted (fibre) cosphere bundles with typical fibre $S^*\Sigma_g$,
 but
 there are {\em no} non-trivial fibre bundles over $S^2$ with typical fibre $\Sigma_g$. 
 }}


 \section{Chern class of the Fourier integral operator central extension}
\label{CentralextFIO}
 

Using a regularized trace on pseudodifferential operators, we describe a
natural class of connections on the central extension in equation
\eqref{centralext}.  The curvature is expressed in terms of the residue
trace $\rTr$ of Guillemin and Wodzicki and the exterior derivation. We compute the
1-form arising from the  central extension which together with the curvature 
determines the central extension up to isomorphism in
the absence of torsion in degree 2 integral cohomology, see
\cite{BM,MS}.
 
Consider the Maurer-Cartan form, which is a
Lie algebra valued differential 1-form
$$
\Theta \in\Omega^1(\GL(\cF^0(Z;V));\LA(Z;V)),
$$
where $\LA(Z;V)$ is the Lie algebra of $\GL(\cF^0(Z;V)),$ consisting of the
pseudodifferential operators of order $1$ with the additional constraint
that the principal symbol is scalar 
and purely
imaginary. This canonical form is determined by left-invariance and the
identification of $T_{\Id}\GL(\cF^0(Z;V))$ with $\LA(Z;V):$
\begin{equation}
L_g^*\Theta=\Theta,\ \Theta _{\Id}(v)=v.
\label{MC1}\end{equation}
Since $\cF^0(Z;V)$ lies in a linear space of operators, it is directly
meaningful to use the familiar notation $\Theta = F^{-1}dF$ for $F\in \GL(\cF^0(Z;V)).$
Under the right action and the adjoint action on the Lie algebra
\begin{equation}
R _g^*(\Theta) = {\rm Ad}(g)\Theta=g\Theta g^{-1}.
\label{invariance}\end{equation}

Let $Q\in\Psi^1(Z;V)$ be an elliptic pseudodifferential operator which is
self-adjoint and positive with respect to some inner product and density
and let
$$
\Tr_Q : \Psi^\bbZ(Z;V) \longrightarrow \bbC
$$
denote the {\em regularized trace} with respect to $Q.$ That is, $\Tr_Q(A)$
is the regularized value at $z=0$ of the meromorphic extension of $\Tr(Q^zA).$ 
Thus $\Tr_Q$ extends the operator trace from the ideal of trace class
operators $\Psi^{m}(Z;V),$ $m<-\dim(Z),$ but is not itself a trace.

As shown by Guillemin \cite{G93} one can use in place of $Q^z$ any entire
classical family $Q(z)\in\Psi^z(Z;W)$ with $Q(0)=\Id$ and entire inverse
$Q(z)^{-1}\in\Psi^{-z}(Z;W).$ Then the residue trace of Guillemin and
Wodzicki and the corresponding regularized trace are determined by the
expansion near $z=0$
\begin{equation}
\Tr(Q(z)A)=z^{-1}\rTr(A)+\Tr_{Q}(A)+zG(z).
\end{equation}

The evaluation of the residue trace on a commutator is given by the trace-defect formula 
\begin{equation}
\begin{gathered}
\Tr_Q([A,B])=\rTr([\log Q,A]B)=\rTr((\delta _QA)B)=-\rTr(A\delta _QB)\Mwhere\\
\delta _QA=[\log Q,A]=\frac{d}{dz}\big|_{z=0}Q(z)AQ(z)^{-1}
\end{gathered}
\label{20.August.2012.1}\end{equation}
is the exterior derivation defined by $Q(z).$

Under change of the regularizing operator $Q,$ to $Q',$ the regularized trace
changes by 
\begin{equation}
\Tr_{Q'}(A)-\Tr_{Q}(A)=\rTr(A(\log Q'-\log Q)),\ \log Q'-\log Q\in\Psi^0(Z;V).
\label{20.August.2012.2}
\end{equation}

\begin{lemma}\label{20.August.2012.3} There is an entire holomorphic family
$Q(z)\in\Psi^z(Z;V)$ with entire inverse $Q(-z)$ such that
\begin{equation}
\Tr_Q(\Id)=1.
\label{20.August.2012.4}
\end{equation}
\end{lemma}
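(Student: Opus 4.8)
The plan is to leave the \emph{meaning} of the regularized trace in place and instead change the regularizing operator, using the change-of-regularization formula \eqref{20.August.2012.2}. If $Q'\in\Psi^1(Z;V)$ is another positive self-adjoint elliptic operator, then $\log Q'-\log Q\in\Psi^0(Z;V)$ and, by \eqref{20.August.2012.2} applied to $A=\Id$,
\begin{equation*}
\Tr_{Q'}(\Id)=\Tr_Q(\Id)+\rTr(\log Q'-\log Q).
\end{equation*}
Note first that $\rTr(\Id)=0$, since the complete symbol of the identity has no component homogeneous of degree $-\dim Z$; hence $\Tr(Q^z)$ is actually regular at $z=0$ and $\Tr_Q(\Id)=\zeta_Q(0)$ is an honest real number. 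Once a $Q'$ with $\Tr_{Q'}(\Id)=1$ is produced, Seeley's complex powers give an entire family of classical operators $Q(z):=(Q')^z\in\Psi^z(Z;V)$ with $Q(0)=\Id$ forming a one-parameter group, so its inverse is literally $Q(-z)\in\Psi^{-z}(Z;V)$ and \eqref{20.August.2012.4} holds by construction. Thus the whole problem reduces to realizing the value $1-\Tr_Q(\Id)$ as $\rTr(\log Q'-\log Q)$ for a suitable $Q'$.

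The key input is that the residue trace $\rTr\colon\Psi^0(Z;V)\to\bbC$ is surjective, and surjective onto $\bbR$ already on self-adjoint operators. Indeed, with $n=\dim Z$, let $B\in\Psi^0(Z;V)$ be the classical operator whose complete symbol is $c\,\psi(\xi)\,|\xi|^{-n}\Id$, where $\psi\in\CI(\bbR^n)$ is $1$ for $|\xi|\ge1$ and $0$ near the origin; after symmetrizing, $B$ may be taken self-adjoint, and $\rTr(B)=\Re(c)\,(2\pi)^{-n}\,\rank(V)\,\Vol(S^*Z)$, which takes every real value as $\Re(c)$ varies. Fix such a $B$ with $\rTr(B)=1-\Tr_Q(\Id)$ and set $Q':=\exp(\log Q+B)$. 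Since $B$ is bounded and self-adjoint, $\log Q+B$ is self-adjoint on the domain of $\log Q$, with Weyl symbol $(\log|\xi|)\Id$ plus a classical symbol of order $0$; a Duhamel/time-ordering argument, using that conjugation by $Q^u$ preserves classical operators of order $0$ and that $\Psi^0(Z;V)$ is an algebra (Proposition~\ref{22b.9.2012.1}), shows that $\exp(\log Q+B)\,Q^{-1}\in\Psi^0(Z;V)$, so that $Q'\in\Psi^1(Z;V)$ is a positive self-adjoint elliptic classical operator with $\log Q'=\log Q+B$. Consequently $\rTr(\log Q'-\log Q)=\rTr(B)=1-\Tr_Q(\Id)$, hence $\Tr_{Q'}(\Id)=1$, and $Q(z)=(Q')^z$ is the required family.

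There is an equivalent, slightly more explicit route that sidesteps the complex powers of $\exp(\log Q+B)$: take $Q(z)=Q^z\exp(zB)$, which is manifestly an entire family in $\Psi^z(Z;V)$ with entire inverse $\exp(-zB)\,Q^{-z}\in\Psi^{-z}(Z;V)$ (using that $\exp(zB)\in\Psi^0(Z;V)$ depends holomorphically on $z$, by Proposition~\ref{22b.9.2012.1}), and compute its regularized trace on $\Id$ directly: expanding $\exp(zB)=\Id+zB+z^2C(z)$ and using $\rTr(\Id)=0$ together with $\operatorname{Res}_{z=0}\Tr(Q^zB)=\rTr(B)$ yields once more $\Tr_Q(\Id)+\rTr(B)=1$ — at the cost that the inverse is then only of the form $Q(z)^{-1}$ rather than literally $Q(-z)$. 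In either formulation the single genuinely technical point, the rest being formal, is the verification that the family in question is a \emph{classical} entire family of order $z$ with entire inverse; this is where one invokes Seeley's construction and the Banach-algebra structure of Proposition~\ref{22b.9.2012.1}.
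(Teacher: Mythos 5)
Your normalization mechanism is exactly the paper's: perturb the regularizer so that $\log Q$ changes by an operator whose residue trace, computable from its degree $-\dim Z$ symbol, can be prescribed to equal $1-\Tr_Q(\Id)$, and invoke the change-of-regularization formula \eqref{20.August.2012.2}. Where you diverge is in how the new entire family is actually built, and this is where the paper is slicker. The paper takes $A\in\Psi^{-\dim Z}(Z;V)$ self-adjoint with scalar symbol, corrects by a smoothing $R$ so that $\Id+\ha A+R$ is positive, and sets $Q'(z)=(\Id+\ha A+R)^zQ^z(\Id+\ha A+R)^z$: this is manifestly an entire classical family of order $z$ (complex powers of a positive classical operator of order $0$), its inverse is literally $Q'(-z)$ by the symmetric sandwiching, and $\log Q'=\log Q+A+2R$ gives the shift $\rTr(A)$ with no further analysis. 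Your primary route instead defines $Q'=\exp(\log Q+B)$ and takes Seeley powers; the assertion that this exponential of a log-polyhomogeneous operator is again a \emph{classical} elliptic operator of order $1$ is true but is genuinely the hard point, and your Duhamel/time-ordered-exponential sketch (solving $U'(t)=U(t)\,Q^tBQ^{-t}$ in the projective-limit Banach algebras of Proposition~\ref{22b.9.2012.1}, including the classicality conditions) would have to be carried out in detail — the paper's sandwich construction is designed precisely to avoid this. Your alternative route $Q(z)=Q^ze^{zB}$ is easy and classical, but as you note its inverse is $e^{-zB}Q^{-z}$, not $Q(-z)$ as the statement requires; the fix is simply to symmetrize, $Q(z)=e^{zB/2}Q^ze^{zB/2}$, which restores $Q(z)^{-1}=Q(-z)$ and is in effect the paper's construction with $\Id+\ha A+R$ replaced by $e^{B/2}$. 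One small justification to add: you need $\Tr_Q(\Id)$ to be \emph{real} (not merely finite, which follows from $\rTr(\Id)=0$) in order to hit $1-\Tr_Q(\Id)$ with a self-adjoint $B$; this follows, as in the paper, from $(Q^z)^*=Q^{\overline z}$, which forces the zeta function to be real on the real axis and hence at $z=0$.
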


\begin{proof} Since $(Q^z)^*=Q^{\overline{z}}$ the regularized trace of the
  identity, which is to say the value at $z=0$ of the zeta function, is
  real. If $A\in\Psi^{-\dim Z}(Z;V)$ is self-adjoint with scalar symbol
  then there is a self-adjoint smoothing operator $R\in\Psi^{-\infty}(Z;V)$
  such that $\Id+\ha A+R$ is positive. The regularizing family $Q'(z)=(\Id+\ha
  A+R)^zQ^z(\Id+\ha A+R)^z\in\Psi^z(Z;V)$ corresponds to the exterior derivation
\begin{equation}
\log Q'=\log Q+A+2R
\label{20.August.2012.5}
\end{equation}
and so shifts the regularized trace of the identity by $\rTr(A).$ Since
this is given by the integral of the trace of the leading symbol of $A$ the
normalization \eqref{20.August.2012.4} can be arranged.
\end{proof}

As a functional on $\LA(Z;V)\subset\Psi^{\bbZ}(Z;V),$ the regularized
trace acts on the range of the Maurer-Cartan form, so
\begin{equation}
A_Q=\Tr_Q(\Theta)\in\Omega^1(\GL(\cF^0(Z;V)))
\label{20.August.2012.6}\end{equation}
is a well-defined smooth 1-form on $\GL(\cF^0(Z;V)).$

\begin{lemma}\label{lemma:conn-central} If the regularized trace is chosen
  to satisfy \eqref{20.August.2012.4} then the 1-form $A_Q$ in
  \eqref{20.August.2012.6} is a connection 1-form on the  central extension
  in equation \eqref{centralext} and if $Q_1$ is another such normalized
  regularizing family then
\begin{equation}
A_Q - A_{Q_1} = - \rTr(\Theta(\log(Q)-\log(Q_1)))\in\Omega^1(\PGL(\cF^0(Z;V))).
\label{eqn:varregtr}\end{equation}
\end{lemma}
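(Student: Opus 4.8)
The plan is to verify the two defining properties of a connection 1-form on the central extension \eqref{centralext}: first, that $A_Q$ restricts to the canonical (Maurer--Cartan) form on the fibre $\bbC^\star\Id$, i.e. $A_Q$ evaluated on the fundamental vector field of $z\in\bbC$ (the Lie algebra of $\bbC^\star$) returns $z$; and second, the appropriate equivariance under the $\bbC^\star$-action, namely that $R_\lambda^*A_Q = A_Q$ since $\bbC^\star$ is abelian and central. For the first property, note that the tangent vector to the central fibre at the identity is $z\,\Id\in\LA(Z;V)$, so $\Theta$ evaluated there is $z\,\Id$, and $\Tr_Q(z\,\Id)=z\,\Tr_Q(\Id)=z$ precisely because of the normalization \eqref{20.August.2012.4}; this is exactly where the hypothesis is used. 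For equivariance, I would use \eqref{invariance}: under the right action $R_g^*\Theta=\operatorname{Ad}(g)\Theta=g\Theta g^{-1}$, so $R_g^*A_Q=\Tr_Q(g\Theta g^{-1})$; when $g=\lambda\Id$ is central this is just $\Tr_Q(\Theta)=A_Q$, giving invariance under the central fibre action. (One should also remark that $A_Q$ is genuinely $\LA(Z;V)$-valued-scalar, i.e.\ that $\Tr_Q$ is finite on the order-one operators in $\LA(Z;V)$ with pure-imaginary diagonal symbol — this is standard for the regularized/zeta trace since the pole of $\Tr(Q^zA)$ at $z=0$ is simple and its residue is the noncommutative residue, which is finite.)

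For the comparison formula \eqref{eqn:varregtr}, I would apply \eqref{20.August.2012.2} pointwise to $A=\Theta$, the $\LA(Z;V)$-valued Maurer--Cartan form. Since $\log Q-\log Q_1\in\Psi^0(Z;V)$, formula \eqref{20.August.2012.2} gives $\Tr_{Q_1}(\Theta)-\Tr_Q(\Theta)=\rTr\bigl(\Theta(\log Q_1-\log Q)\bigr)$, i.e.\ $A_Q-A_{Q_1}=-\rTr\bigl(\Theta(\log Q-\log Q_1)\bigr)$. The remaining point is that the right-hand side descends to a 1-form on the projective group $\PGL(\cF^0(Z;V))$ rather than merely living on $\GL(\cF^0(Z;V))$. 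This is where the residue trace's vanishing on the centre enters: the fundamental vector field of the central $\bbC^\star$-action pairs with $\Theta$ to give a multiple of $\Id$, and $\rTr(\Id\cdot(\log Q-\log Q_1))=0$ because $\log Q-\log Q_1$ has order $0$, hence order $<0$ up to a smoothing ambiguity is not even needed — the noncommutative residue of an operator of order $\le -1$ vanishes, and more to the point $\rTr$ of any operator of integer order equals the integral over $S^*Z$ of the $(-\dim Z)$-homogeneous part of its symbol, which for $\log Q-\log Q_1\in\Psi^0$ vanishes. Combined with the fact that $\rTr$ is a trace (so insensitive to the inner-derivation ambiguity in lifting vector fields on $\PGL$ to $\GL$), this shows the 2-term difference is basic for the central fibration and so defines a 1-form on $\PGL(\cF^0(Z;V))$, as asserted.

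The main obstacle I anticipate is not any single computation but the bookkeeping around \emph{which} space each object lives on and the well-definedness of $\rTr(\Theta\,\cdot\,(\log Q-\log Q_1))$ as a form on $\PGL$: one must check that although $\Theta$ itself is only defined on $\GL(\cF^0(Z;V))$, the pairing $\rTr(\Theta\,B)$ with $B$ of order $0$ is annihilated by the vertical directions of $\GL\to\PGL$ and is invariant under the $\bbC^\star$-action, so genuinely descends. The ingredients are all in hand — the trace-defect formalism \eqref{20.August.2012.1}, the change-of-regularizer formula \eqref{20.August.2012.2}, the normalization Lemma \ref{20.August.2012.3}, and the equivariance \eqref{invariance} — so once the descent is argued carefully the rest is a short verification.
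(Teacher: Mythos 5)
Your overall route is the paper's: the normalization \eqref{20.August.2012.4} makes $A_Q$ restrict to the Maurer--Cartan form of the central fibre, centrality of $\bbC^*$ gives the equivariance, and \eqref{20.August.2012.2} applied to $\Theta$ gives the comparison formula. The genuine gap is in your descent argument for the difference form. You justify $\rTr\bigl(\Id\cdot(\log Q-\log Q_1)\bigr)=0$ on the vertical directions by asserting that the noncommutative residue of an operator of order $\le -1$ vanishes, and that for an operator of order $0$ the $(-\dim Z)$-homogeneous part of the symbol vanishes. Both assertions are false: a classical operator of order $0$ (or of any order $\ge -\dim Z$) has in general a nonzero term of homogeneity $-\dim Z$ in its symbol expansion, and $\rTr$ is exactly the integral over $S^*Z$ of that term, so it is generically nonzero on $\Psi^0(Z;V)$; the residue trace does vanish on operators of order $<-\dim Z$ and on differential operators, but nonvanishing of $\rTr$ on order-zero operators is precisely what makes the trace-defect formula \eqref{20.August.2012.1} nontrivial.

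The vanishing you need is nevertheless true, but for a different reason, and this is exactly where the hypothesis that $Q_1$ is \emph{another normalized} family enters (you used the normalization only for the connection property): applying \eqref{20.August.2012.2} with $A=\Id$ gives $\rTr(\log Q-\log Q_1)=\pm\bigl(\Tr_Q(\Id)-\Tr_{Q_1}(\Id)\bigr)=0$ by \eqref{20.August.2012.4} for both families. Since on the vertical directions of $\GL(\cF^0(Z;V))\to\PGL(\cF^0(Z;V))$ the Maurer--Cartan form is a scalar multiple of $\Id$, the difference $A_Q-A_{Q_1}$ then vanishes on the fibres; combined with invariance under the central action (conjugation by central elements is trivial, as you correctly note) this gives the descent, which is the paper's own argument. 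Separately, a sign slip: from your displayed identity $\Tr_{Q_1}(\Theta)-\Tr_Q(\Theta)=\rTr(\Theta(\log Q_1-\log Q))$ one gets $A_Q-A_{Q_1}=+\rTr(\Theta(\log Q-\log Q_1))$, not the minus sign you wrote; the minus sign in \eqref{eqn:varregtr} is in fact already in tension with \eqref{20.August.2012.2} as stated, so this is harmless for the mathematics, but your two consecutive lines contradict each other and should be reconciled with a fixed convention.
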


\begin{proof} Clearly $A_Q$ is a left-invariant 1-form on $\GL(\cF(Z;V)).$
  The central subgroup consists of the multiples of the identity, so the
  normalization \eqref{20.August.2012.6} ensures that it restricts to the
  fibres of $\GL(\cF^0(Z;V))$ over $\PGL(\cF^0(Z;V))$ to be the
  Maurer-Cartan form for the centre i.e.\ it is a connection form. The
  transgression formula \eqref{eqn:varregtr} follows and since $\Theta$ is
  a multiple of the identity, the difference vanishes on the fibres and
  hence is a smooth form on $\PGL(\cF^0(Z;V)).$ 
\end{proof}

\begin{lemma}\label{lemma:curv-central} For a normalized regularization,
  satisfying \eqref{20.August.2012.4}, the curvature of the connection
  $A_Q$ is
\begin{equation}
\Omega_Q= \Tr_R(\delta_Q(\Theta)\wedge\Theta)
\label{20.August.2012.7}\end{equation}
and transgresses under change of regularization by
$$
\Omega_Q-\Omega_{Q_1} = - d\Tr_R\left(\Theta(\log(Q) - \log(Q_1))\right).
$$
The first Chern class of the  central extension is therefore  
\begin{equation}
c_1(\GL(\cF(Z;V))) = \left[\frac{i}{2\pi} \Omega_Q\right] \in \bfH^2(\PGL(\cF(Z;V); \ZZ).
\end{equation}

\end{lemma}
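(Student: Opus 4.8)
The plan is to exploit that the structure group $\bbC^\star$ of the central extension \eqref{centralext} is abelian, so that the curvature of the connection $A_Q$ is simply $\Omega_Q=dA_Q$; the whole content is then to evaluate this exterior derivative. First I would note that $\Tr_Q$ is a fixed linear functional on $\Psi^{\bbZ}(Z;V)$, continuous on each order-filtered piece, hence commutes with $d$, so that $dA_Q=\Tr_Q(d\Theta)$. Differentiating $\Theta=F^{-1}dF$ and using $d(F^{-1})=-F^{-1}(dF)F^{-1}$ produces the Maurer--Cartan identity $d\Theta=-\Theta\wedge\Theta$, where $\Theta\wedge\Theta$ is the $\Psi^2(Z;V)$-valued $2$-form $(X,Y)\mapsto[\Theta(X),\Theta(Y)]$; thus $\Omega_Q=-\Tr_Q(\Theta\wedge\Theta)$.

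The next step is to convert the regularized trace of this commutator-valued form into the residue trace using the trace-defect formula \eqref{20.August.2012.1}: $\Tr_Q([\Theta(X),\Theta(Y)])=\rTr\big((\delta_Q\Theta(X))\,\Theta(Y)\big)$, where $\delta_Q\Theta$ is the $1$-form $X\mapsto[\log Q,\Theta(X)]$. Since $\rTr$ is a genuine trace and $\delta_Q$ a derivation annihilated by it, antisymmetrising in $X$ and $Y$ lets me recognise the resulting $2$-form as $\rTr(\delta_Q(\Theta)\wedge\Theta)$, which is \eqref{20.August.2012.7}. Along the way I would check that $\Omega_Q$ descends to $\PGL(\cF^0(Z;V))$: on vertical directions $\Theta$ takes values in the scalars, on which $[\log Q,\cdot]$ vanishes, so $\delta_Q\Theta$ — and hence $\rTr(\delta_Q(\Theta)\wedge\Theta)$ — is horizontal, while invariance is automatic because the centre acts trivially. (This descent is in any case forced, $\Omega_Q$ being a curvature form.)

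For the transgression under change of regularization, since $\Omega_Q$ and $\Omega_{Q_1}$ are the exterior derivatives of the respective connection $1$-forms one has $\Omega_Q-\Omega_{Q_1}=d(A_Q-A_{Q_1})$, and substituting \eqref{eqn:varregtr} from Lemma~\ref{lemma:conn-central} gives $\Omega_Q-\Omega_{Q_1}=-\,d\,\rTr\big(\Theta(\log Q-\log Q_1)\big)$; in particular the de Rham class of $\Omega_Q$ is independent of $Q$. Finally, $\GL(\cF^0(Z;V))\to\PGL(\cF^0(Z;V))$ is a principal $\bbC^\star$-bundle, equivalently a complex line bundle once the contractible positive-scalar factor is discarded, and $A_Q$ is a connection on it by Lemma~\ref{lemma:conn-central}; hence $\Omega_Q$ is closed (here trivially, being $dA_Q$) and Chern--Weil theory for line bundles identifies $\tfrac{i}{2\pi}\Omega_Q$ as a de Rham representative of the image in $H^2(\PGL(\cF^0(Z;V));\bbC)$ of the first Chern class of the central extension, which is the last displayed formula.

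I expect the only real obstacle to be bookkeeping: pinning down the exact numerical constant and sign relating $\Tr_Q(\Theta\wedge\Theta)$ to $\rTr(\delta_Q(\Theta)\wedge\Theta)$ through the antisymmetrisation and the trace-defect identity, which forces one to fix once and for all the conventions for the wedge of $\LA(Z;V)$-valued forms, for the sign of curvature, and for the normalisation of the residue trace so that \eqref{20.August.2012.7} holds on the nose; the rest is routine.
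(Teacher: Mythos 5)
Your proposal is correct and follows essentially the same route as the paper: the paper also computes $\Omega_Q=dA_Q$ using left-invariance of $\Theta$ (via the Cartan formula on left-invariant vector fields, which is just your Maurer--Cartan structure equation $d\Theta=-\Theta\wedge\Theta$ in another guise) and then invokes the trace-defect formula \eqref{20.August.2012.1} to pass from $\Tr_Q$ of the commutator to $\rTr(\delta_Q(\Theta)\wedge\Theta)$, with the transgression read off from Lemma~\ref{lemma:conn-central} exactly as you do. The sign/factor bookkeeping you flag is likewise left implicit in the paper, so nothing essential is missing.
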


\begin{proof} It suffices to compute the curvature as a form on $\GL(\cF^0(Z;V)).$ 
For $\psi_1,$ $\psi_2 \in\LA(Z;V)$ the standard formula for the
differential gives
$$
d\Tr_Q(\Theta)(\psi_1, \psi_2)=\psi_1\Tr_Q(\Theta)(\psi_2) - \psi_2  \Tr_Q(\Theta)(\psi_1)    -\Tr_Q(\Theta)([\psi_1, \psi_2]).
$$
Since the Maurer-Cartan 1-form $\Theta$ is left-invariant the first two
terms on the right side vanish. Applying the trace-defect formula
\begin{equation}
\Tr_Q(\Theta)([\psi_1,\psi_2]) 
=\Tr_Q([\psi_1,\psi_2])=\rTr(\delta_Q(\psi_1)\psi_2)
\end{equation}
gives the formula \eqref{20.August.2012.7} in general by left-invariance.

The transgression formula follows immediately from Lemma~\ref{lemma:conn-central} above.
\end{proof}

Note that the curvature can be expanded to more resemble a suspended Chern
form (cf. \cite{MelRoc}) in which $\delta _Q$ plays the role of the push-forward. Namely
\begin{equation}
\Omega_Q(F)=-\Tr_R(F^{-1}(\delta_QF)F^{-1}dF\wedge
F^{-1}dF)+d\Tr_R(F^{-1}(\delta_QF)F^{-1}dF).
\label{27.August.2012.1}\end{equation}

The circle bundle given by a central extension of a group does not in
general characterise the central extension, cf.\ \cite{BM, MS}. The
multiplicative structure, i.e.\ the primitivity of the bundle, gives as a
further invariant, namely a 1-form on the product of the base group with itself.

For a Lie group (possibly infinite-dimensional as in this case) $G$ and a form
$\beta$ on $G^p$ consider the form on the product
$G^{p+1}$ defined by
\begin{equation}
\begin{gathered}
\delta\beta=\sum\limits_{j=1}^{p+1}(-1)^{j-1}d_i^*\beta,\ d_i:G^{p+1}\longrightarrow G^p\\
d_1(g_1,\dots,g_{p+1})=(g_2,\dots,g_{p+1}),\\
d_i(g_1,\dots,g_{p+1})=(g_1,\dots,g_{i-1}g_i,\dots,g_{p+1}),\ 1<i\le p\\
d_{p+1}(g_1,\dots,g_{p+1})=(g_1,\dots,g_{p}).
\end{gathered}
\label{27.August.2012.3}\end{equation}
Then, 
\begin{equation}
d\delta =\delta d,\ \delta ^2=0.
\label{27.August.2012.2}\end{equation}

\begin{lemma}\label{lemma:1-form} The 1-form
\begin{equation}
\alpha_Q(F_1,F_2)= - \Tr_R\left((\delta _QF_2)F_2^{-1}F_1^{-1}dF_1)\right)
\label{27.August.2012.4}\end{equation}
is well-defined on $(\PGL(\cF^0(Z;V)))^2$ and satisfies
$$
d\alpha_Q = \delta \Omega_Q,\ \delta\alpha_Q = 0.
$$
\end{lemma}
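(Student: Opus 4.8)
The goal is to verify the two identities for $\alpha_Q$: first that it descends to $(\PGL(\cF^0(Z;V)))^2$, and then the cocycle relations $d\alpha_Q = \delta\Omega_Q$ and $\delta\alpha_Q = 0$. The well-definedness is the starting point: I would check that $\alpha_Q(F_1,F_2)$ is unchanged under $F_i \mapsto \lambda_i F_i$ for $\lambda_i \in \bbC^*$. For the $F_1$-dependence, $F_1^{-1}dF_1$ is the Maurer--Cartan form $\Theta$ of $\GL(\cF^0(Z;V))$, which is manifestly invariant under the central $\bbC^*$; so is $(\delta_Q F_2)F_2^{-1}$ since $\delta_Q$ annihilates scalars and $(\lambda F_2)(\lambda F_2)^{-1}$ logic applies. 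Hence $\alpha_Q$ is basic for the $(\bbC^*)^2$-action and descends. (One must also note that $\delta_Q F_2 \cdot F_2^{-1} \in \LA(Z;V)$ up to a smoothing/lower-order correction so the residue trace pairing makes sense; this is exactly the content of the trace-defect formula \eqref{20.August.2012.1} together with the fact that $\rTr$ vanishes on sufficiently negative order, so the product with $\Theta$ is in the domain of $\rTr$.)

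The core computation is $d\alpha_Q = \delta\Omega_Q$. Here I would use the expanded form \eqref{27.August.2012.1} of the curvature,
$$
\Omega_Q(F) = -\rTr\!\left(F^{-1}(\delta_Q F)F^{-1}dF \wedge F^{-1}dF\right) + d\,\rTr\!\left(F^{-1}(\delta_Q F)F^{-1}dF\right),
$$
and apply the simplicial differential $\delta$ from \eqref{27.August.2012.3} with $p=2$: $\delta\Omega_Q(F_1,F_2) = \Omega_Q(F_2) - \Omega_Q(F_1F_2) + \Omega_Q(F_1)$. The strategy is to compute the pullbacks $d_i^*\Omega_Q$ by expanding $(F_1F_2)^{-1}d(F_1F_2) = F_2^{-1}(F_1^{-1}dF_1)F_2 + F_2^{-1}dF_2$ and $\delta_Q(F_1F_2) = F_1(\delta_QF_2)F_1^{-1} + (\delta_QF_1)$ (Leibniz for the derivation $\delta_Q$, applied to the group element), then collect terms. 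The cross terms that survive the cancellation of the "pure" $\Omega_Q(F_1)$ and $\Omega_Q(F_2)$ pieces should reassemble, using cyclicity of $\rTr$ and the trace-defect identity $\rTr((\delta_QA)B) = -\rTr(A\delta_QB)$, into exactly $d\alpha_Q(F_1,F_2)$. Concretely, $d\alpha_Q = -d\,\rTr((\delta_QF_2)F_2^{-1}F_1^{-1}dF_1)$ and one expects the exact term in \eqref{27.August.2012.1} to contribute the "$d$" and the wedge term to contribute (after differentiation) the remaining pieces via the Maurer--Cartan structure equation $d\Theta = -\Theta\wedge\Theta$.

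For $\delta\alpha_Q = 0$: with $\alpha_Q$ a 1-form on $G^2$, $\delta\alpha_Q$ is a 1-form on $G^3$ given by $d_1^*\alpha_Q - d_2^*\alpha_Q + d_3^*\alpha_Q - d_4^*\alpha_Q$, i.e.
$$
\alpha_Q(F_2,F_3) - \alpha_Q(F_1F_2,F_3) + \alpha_Q(F_1,F_2F_3) - \alpha_Q(F_1,F_2).
$$
Substituting the Leibniz expansions for $\delta_Q(F_iF_j)$ and the multiplicativity of $F^{-1}dF$ as above, each term becomes a residue-trace pairing, and cyclicity plus the antisymmetry $\rTr((\delta_Q A)B) = -\rTr(A\,\delta_Q B)$ should force a telescoping cancellation. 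This is the standard "group 2-cocycle / 1-cochain transgression" bookkeeping; the only subtlety is keeping track of conjugations by $F_1$ (resp. $F_2$) inside the trace and using $\rTr(\Ad(g)X) = \rTr(X)$, which holds because $\rTr$ is a trace.

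\emph{Main obstacle.} The genuine difficulty is not formal but analytic: at every step one is pairing $\rTr$ against products like $(\delta_Q F_2)F_2^{-1}\cdot F_1^{-1}dF_1$ where the individual factors are pseudodifferential of order $\ge 1$ (so not trace class, and $\rTr$ is not a trace on them), yet $\delta_Q$ of a group element is order $\le 0$. I would isolate once and for all the lemma that for $F \in \GL(\cF^0(Z;V))$ one has $(\delta_Q F)F^{-1} \in \Psi^0(Z;V)$ (since $\log Q$ has the same leading behaviour as a classical symbol plus $\log|\zeta|$, conjugation by $Q(z)$ preserves order, and differentiating at $z=0$ drops the order by one relative to $F$ itself acting on the order-$1$ Lie algebra — more precisely $\delta_Q$ maps $\cF^0$ into $\Psi^0$), so that all the residue-trace pairings appearing are of an order-$0$ pseudodifferential operator times $\Theta$, on which $\rTr$ is legitimately defined and cyclic. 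Once this boundedness/order bookkeeping is in place, the identities $d\alpha_Q = \delta\Omega_Q$ and $\delta\alpha_Q = 0$ follow from the trace-defect formula and the Maurer--Cartan equation by the purely algebraic manipulation sketched above.
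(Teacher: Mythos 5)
There is a genuine gap in your well-definedness argument. Checking that $\alpha_Q$ is unchanged under $F_i\mapsto\lambda_iF_i$ for \emph{constant} $\lambda_i\in\bbC^*$ only gives invariance under the central action; for the form to descend to $(\PGL(\cF^0(Z;V)))^2$ it must also be horizontal, i.e.\ vanish on vectors tangent to the fibres of $\GL(\cF^0(Z;V))\to\PGL(\cF^0(Z;V))$. Concretely, replacing $F_1$ by $\lambda(x)F_1$ with a varying scalar changes $F_1^{-1}dF_1$ by $\lambda^{-1}d\lambda\,\Id$, so $\alpha_Q$ changes by $-\rTr\bigl((\delta_QF_2)F_2^{-1}\bigr)\lambda^{-1}d\lambda$, and descent requires the nontrivial identity $\rTr\bigl((\delta_QF_2)F_2^{-1}\bigr)=\Tr_Q([F_2,F_2^{-1}])=0$, which follows from the trace-defect formula \eqref{20.August.2012.1}. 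This vanishing is precisely the content of ``well-defined on $(\PGL(\cF^0(Z;V)))^2$'' and it is absent from your argument (your order bookkeeping, $(\delta_QF)F^{-1}=\log Q-F\log Q\,F^{-1}\in\Psi^0(Z;V)$, is correct and needed, but it is not the issue).

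For the two identities your plan is a brute-force expansion of $\delta\Omega_Q$ and $\delta\alpha_Q$ using the Leibniz rule for $\delta_Q$ and multiplicativity of the Maurer--Cartan form, with the decisive cancellations only asserted (``should reassemble'', ``should telescope''), so the core of the lemma is left unverified. The paper avoids this entirely by a structural observation you miss: $\alpha_Q$ \emph{is} the simplicial coboundary $\hat\delta A_Q$ of the connection form $A_Q=\Tr_Q(\Theta)$ on $\GL(\cF^0(Z;V))$, computed in one line via the trace-defect formula,
\begin{equation*}
(\hat\delta A_Q)(F_1,F_2)=\Tr_Q(\Theta_2)-\Tr_Q\bigl((F_1F_2)^{-1}d(F_1F_2)\bigr)+\Tr_Q(\Theta_1)
=-\rTr\bigl((\delta_QF_2)F_2^{-1}F_1^{-1}dF_1\bigr),
\end{equation*}
after which $d\alpha_Q=\hat\delta\, dA_Q=\delta\Omega_Q$ (since $dA_Q$ is the pull-back of the curvature) and $\delta\alpha_Q=\hat\delta^2A_Q=0$ follow immediately from \eqref{27.August.2012.2}. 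Your route is probably feasible, but as it stands neither the descent step nor the cocycle identities are actually established; identifying $\alpha_Q$ as $\hat\delta A_Q$ is the missing idea that makes both immediate.
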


\noindent In the absence of torsion such a pair $(\Omega_Q, \alpha_Q)$ 
determines the central extension up to isomorphism, see \cite{BM, MS}.

\begin{proof} The operator in \eqref{27.August.2012.3} is well-defined on
  any smooth group, let $\hat\delta$ denote the corresponding operator on
  the full group $\GL(\cF^0(Z;V)).$ Let $\Theta_i,$ for $i=1,2$ be the
  pull-backs to $(\GL(\cF^0(Z;V))^2$ of the Maurer-Cartan form from the two factors. Then
\begin{equation}
\begin{aligned}
(\hat\delta A_Q)(F_1,F_2)=
&\Tr_Q(\Theta_2)-\Tr_Q(F_2^{-1}F_1^{-1}d(F_1F_2))+\Tr_Q(\Theta_1)\\
=&-\Tr_Q([F_2^{-1},\Theta_1F_2])\\
=&-\rTr((\delta _QF_2)F_2^{-1}F_1^{-1}dF_1).
\end{aligned}
\label{22.August.2012.5}\end{equation}
This is just $\alpha_Q$ in \eqref{27.August.2012.4}.

Restricting to a fibre of \eqref{centralext} in the first factor
corresponds to fixing $F_1$ up to a scalar multiple, so $\Theta_1$ is a
scalar (1-form) multiple of the identity and $\alpha _Q$ is therefore a
multiple of $\rTr((\delta_Q F)F^{-1})$ pulled-back from the second
factor. By the trace-defect formula this is equal to $\Tr_Q([F^{-1},F])$
and so vanishes. Similarly, restricted to the fibre in the first factor
$(\delta _QF)F^{-1}$ is constant, so in fact $\alpha _Q$ descends to a form
on $(\PGL(\cF^0(Z;V)))^2$ as claimed. In view of \eqref{27.August.2012.2}
this form satisfies the desired identities.
\end{proof}


\section{The Dixmier-Douady class of a projective FIO bundle}\label{sect:FIODD}


Next we proceed to compute the image in deRham cohomology of the
Dixmier-Douady class of a principal $\PGL(\cF^0(Z;V))$-bundle over
$X.$ This is the obstruction to lifting the structure group to
$\GL(\cF^0(Z;V))$ and to compute it we use the approach of Murray, via the
associated bundle gerbe, as elaborated in \cite{MS} for the case of loop groups.

The principal bundle, ${\bf F},$ can be seen as a `lifting gerbe' with the
central extension of its structure group written as a principal $\bbC^*$-bundle
\begin{equation}
\xymatrix{
\CC^*\ar[r]&
\GL(\cF^0(Z;V))\ar[d]^{}\\
&\PGL(\cF^0(Z;V))\ar@{-}[r]&
{\bf F}\ar[d]^{\pi}\\
&& X.}
\label{lifting}\end{equation}
A connection on the $\bbC^*$ bundle over the group was constructed in
Section~\ref{CentralextFIO} with curvature given by
\eqref{20.August.2012.7}. The associated bundle gerbe is the fibre product
${\bf F}^{[2]}$ with line bundle $\cL=\tau^*\GL(\cF^\bullet(Z;V))$ obtained
by pulling back via the fibre shift map $\tau:{\bf F}^{[2]}\longrightarrow
\PGL(\cF^0(Z;V)),$ $\tau(a, b) = b^{-1}a;$ there are two `simplicial' maps
back to the principal bundle
\begin{equation}\label{bg}
\xymatrix{
&
\cL\ar[d] \ar[r]
&
\GL(\cF^\bullet(Z;V))\ar[d]^{}
\\
{\bf F}\ar[d]_{\pi}
&
{\bf F}^{[2]}\ar[dl]^{\pi^{[2]}}
\ar@<3pt>[l]^-{\pi_2}\ar@<-3pt>[l]_-{\pi_1}
\ar[r]^-\tau
&
\PGL(\cF^\bullet(Z;V))
\\
X. &
}
\end{equation}

We proceed to construct a connection on $\cL$ as a principal $\bbC^*$
bundle over $\bfF^{[2]}$ which is \emph{primitive} in the sense that its
curvature decomposes in the form $\pi_1^*\bfB-\pi_2^*\bfB$ where $\bfB$ is a 2-form
on $\bfF,$ the curving or B-field. Then, $d\bfB$ is necessarily basic, hence
is the pull-back of a 3-form, $H,$ on $X$ which represents the image of the
Dixmier-Douady class in $\bfH^3(X;\bbR).$ 

To get an explicit formula for $H$ we start by modifying the connection
$A_Q$ of Section~\ref{CentralextFIO} to the `middle' connection form on
$\GL(\cF^0(Z;V))$
\begin{equation}
\alpha=\ha\Tr_Q(\hat\theta _L+\hat\theta_R),\
\hat\theta_L=g^{-1}dg,\ \hat\theta _R=(dg)g^{-1}=g\theta _L g^{-1} 
\label{27.August.2012.19}\end{equation}
corresponding to the choice of a normalized regularized trace $\Tr_Q$ on
$\Psi^{\bbZ}(Z;V)$ and hence on the Lie algebra of $\GL(\cF^0(Z;V)).$

The derivation $\delta _Q$ associated to the holomorphic family $Q$ induces
a map from $\GL(\cF^0(Z;V))$ to its Lie algebra by
\begin{equation}
\zeta=g^{-1}\delta _Qg.
\label{27.August.2012.20}\end{equation}
Since $\delta _Q\Id=0,$ this function descends to $\PGL(\cF^0(Z;V))$ and
then takes values in its Lie algebra: 
\begin{equation}
\zeta:\PGL(\cF^0(Z;V))\longrightarrow\PLA=\LA(\PGL(\cF^0(Z;V)).
\label{27.August.2012.21}\end{equation}

As in the relation of a connection form on $\bfF$ and the Maurer-Cartan
form, we consider a Higgs field related to $\zeta.$ That is, a {\em Higgs field} is a smooth map 
\begin{equation}
\bfPhi:\bfF\longrightarrow\PLA
\label{27.August.2012.22}\end{equation}
with the transformation law  
\begin{equation}
g^*\bfPhi=\zeta(g)+g^{-1}\bfPhi g,\qquad g\in\PGL(\cF^0(Z;V)).
\label{27.August.2012.23}\end{equation}
In terms of a local trivialization of $\bfF,$ over $U\subset X,$ as a
principal bundle this is equivalent to requiring 
\begin{equation}
\bfPhi=\zeta+g^{-1}\phi(x)g
\label{27.August.2012.24}\end{equation}
for a local field $\phi,$ on $U,$ with values in $\PLA.$ Since this condition is
preserved under convex combinations such a field $\bfPhi$ can be constructed,
as for a connection, as the sum of the local fields $\zeta$ over a
partition of unity subordinate to trivializations. The difference between
two Higgs fields associated to $\delta_Q$ in this sense is a section of the
adjoint bundle associated to $\bfF$ over $X.$

There is also a somewhat more natural construction of such a Higgs field which we
briefly indicate. The bundle $\bfPsi^{\bbZ}$ of pseudodifferential
operators associated to $\bfF$ (by construction) can be extended to a
bundle of complex-order classical operators, $\bfPsi^{\bbC},$ since the
action by conjugation of $\PGL(\cF^0(Z;V))$ extends from $\Psi^{\bbZ}(Z;V)$
to $\Psi^{\bbC}(Z;V).$ Choices of normalized entire regularizing family
$Q(z)\in\Psi^{z}(Z;V)$ as in Lemma~\ref{20.August.2012.3} for a covering by
trivializations of $\bfF$ can be patched through a partition of unity to
give a section $\tilde Q(z)\in\CI(X;\bfPsi^{z}(Z;V))$ with the desired
properties on each fibre, including the normalization condition
\eqref{20.August.2012.4}. Then  
\begin{equation}
\Phi(F)=\frac{d}{dz}\big|_{z=0}F^{-1}\tilde Q(z)F
Q(z)^{-1},\ \Phi:\bfF\longrightarrow \PLA(\cF^0(Z;V))
\label{27.August.2012.32}\end{equation}
is a Higgs field for the derivation associated to the fixed choice $Q(z)$
of regularizing family in $\Psi^{\bbC}(Z;V).$ 

Denote the pull back of $\zeta$ to $\bfF$ under $\tau$ as $\Zeta;$ a choice of
Higgs field associated to $\delta _Q$ provides a global splitting in the sense that
\begin{equation}
\Zeta=\bfPhi_1-\tau^{-1}\bfPhi_2\tau,\ \bfPhi_i=\pi_i^*\bfPhi
\label{27.August.2012.25}\end{equation}
since this follows from \eqref{27.August.2012.24} in any local
trivialization. There is also a relation between $\Zeta$ and the pull-back of
the Maurer-Cartan form
\begin{equation}
d\Zeta=\delta_Q\Theta+[\Zeta,\Theta].
\label{27.August.2012.18}\end{equation}
This follows from the corresponding formula on $\PGL(\cF^0(Z;V))$ that 
\begin{equation}
d\zeta =d(g^{-1}\delta _Qg)=\delta _Q(g^{-1}dg)+[\zeta,g^{-1}dg]=\delta
_Q(\theta)+[\zeta,\theta].
\label{27.August.2012.30}\end{equation}

To capture the contribution of the geometry choose a smooth connection form
${\bf A}$ on ${\bf F}$ as a principal $\PGL(\cF^0(Z;V))$ bundle. Then the
pull-back of the Maurer-Cartan form can be expressed in terms of the two
pull-backs of the connection form
\begin{equation}
\Theta=\bfA_1-\tau ^{-1}\bfA_2\tau,\qquad \bfA_i=\pi_i^*\bfA.
\label{27.August.2012.27}\end{equation}
Indeed this follows from the expression for a connection in terms of a
local trivialization
\begin{equation}
{\bf A}=g^{-1}dg+g^{-1}\gamma (x)g.
\label{27.August.2012.26}\end{equation}

The `symmetric' connection form in \eqref{27.August.2012.19} pulls back to
a connection on the $\bbC^*$ bundle $\cL.$ The differential of $\alpha$ as
a form on $\PGL$ follows from the trace-defect formula
\begin{equation}
d\alpha =\frac12\Tr_Q(-\hat\theta _L\wedge \hat\theta  _L+\hat\theta _R\wedge \hat\theta
_R)=\pi^*w,\ w=\pi^*\frac12\rTr(\zeta \theta\wedge \theta).
\label{10.9.2012.2}\end{equation}
Here $\theta_L$ has been replaced by the (left) Maurer-Cartan form
$\theta$ on $\PGL$ which is justified since any vertical part in $\hat\theta
_L\wedge\hat\theta_L$ takes values as a multiple of the identity in $\LA(\cF^0(Z;V))$
and so vanishes with $\rTr(\zeta);$ the resulting basic form is fibre constant.

The pull-back of $d\alpha$ to $\bfF^{[2]}$ may therefore be written 
\begin{equation}
W=\tau^*w =\ha\rTr(\Zeta\Theta\wedge \Theta).
\label{10.9.2012.3}\end{equation}

Thus the differential of the connection form $\tau^*\alpha$ on the pull-back
$\bbC^*$-bundle $\cL$ is the pull-back of $\pi^*W$ from $\bfF^{[2]}$ where
$W=\tau^*w$ is the pull-back of the $d\alpha$ from $\PGL(\cF^0(Z;V)).$ 
This is not a primitive connection; however

\begin{proposition}\label{10.9.2012.6} The connection form
  $\tau^*\alpha-\pi^*\mu,$ where 
\begin{equation*}
\mu=\ha d\rTr(\Zeta(\bfA_1+\tau ^{-1}\bfA_2\tau),
\label{27.August.2012.31}\end{equation*}
is primitive on $\cL$ with differential the pull-back of $\bfB_1-\bfB_2,$ where
$\bfB_i=\pi_i^*\bfB,$ for the B-field 
\begin{equation}
\bfB=-\ha\rTr((\delta _Q\bfA)\wedge \bfA)+\rTr(\bfPhi\bfW),
\label{10.9.2012.7}\end{equation}
with $\bfPhi$ a Higgs field on $\bfF$ for the derivation $\delta_Q$ and
$\bfW=d\bfA+\bfA\wedge\bfA$ the curvature form for the connection $\bfA$ on $\bfF.$
\end{proposition}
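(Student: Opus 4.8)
The plan is to reduce the Proposition to the single identity of $2$-forms on $\bfF^{[2]}$
\[
W-d\mu \;=\; \pi_1^*\bfB-\pi_2^*\bfB,
\]
and then to establish that identity by a direct expansion using the relations already recorded in \S\ref{CentralextFIO}. This reduction suffices: the form $\mu$ in the statement lives on $\bfF^{[2]}$, so (writing $\pi$ for the projection $\cL\to\bfF^{[2]}$) $\pi^*\mu$ is horizontal and $\tau^*\alpha-\pi^*\mu$ has the same vertical part as $\tau^*\alpha$, namely the Maurer--Cartan form of the centre $\bbC^*$; here one uses, as in Lemma~\ref{lemma:conn-central}, the normalization $\Tr_Q(\Id)=1$ together with $\hat\theta_R=\hat\theta_L$ on the central fibre, so that the ``middle'' form \eqref{27.August.2012.19} is still a connection. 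Hence $\tau^*\alpha-\pi^*\mu$ is a connection on $\cL$, and by the computation $d(\tau^*\alpha)=\pi^*W$ of \S\ref{CentralextFIO}, with $W$ as in \eqref{10.9.2012.3}, its curvature is $\pi^*(W-d\mu)=\pi^*(\pi_1^*\bfB-\pi_2^*\bfB)$, i.e.\ the connection is \emph{primitive} with curving $\bfB_i=\pi_i^*\bfB$. The form $\bfB$ of \eqref{10.9.2012.7} is assembled from the globally defined data $\bfA$, $\bfW$, $\bfPhi$ on $\bfF$ and the ad-invariant functional $\rTr$, so it is a bona fide global $2$-form on $\bfF$; and once the identity is in hand, $d(\pi_1^*\bfB-\pi_2^*\bfB)=d(W-d\mu)=dW$, which vanishes by a short computation from $d\Theta=-\Theta\wedge\Theta$, \eqref{27.August.2012.18}, graded cyclicity of $\rTr$ and the consequence $\rTr(\delta_Q C)=0$ of \eqref{20.August.2012.1} (take $B=\Id$ there), so $d\bfB$ is basic and gives the promised de Rham representative of the Dixmier--Douady class on $X$.

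To prove the identity, substitute the three structural relations of \S\ref{CentralextFIO} into both sides: $\Theta=\bfA_1-\tau^{-1}\bfA_2\tau$ from \eqref{27.August.2012.27}, $\Zeta=\bfPhi_1-\tau^{-1}\bfPhi_2\tau$ from \eqref{27.August.2012.25}, and $d\Zeta=\delta_Q\Theta+[\Zeta,\Theta]$ from \eqref{27.August.2012.18}. Feeding the first two into $W=\ha\rTr(\Zeta\,\Theta\wedge\Theta)$ and into $\mu$, and the third into $d\mu$, one writes $W-d\mu$ as a sum of traces of products of $\bfA_1$, $\bfA_2':=\tau^{-1}\bfA_2\tau$, $\bfPhi_1$, $\bfPhi_2':=\tau^{-1}\bfPhi_2\tau$, $\delta_Q\bfA_i$ and $d\bfA_i$. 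Three mechanisms then finish the job: (i) graded cyclicity of $\rTr$, which kills every trace of a graded commutator of Lie-algebra-valued forms; (ii) ad-invariance of $\rTr$, which rewrites the primed (conjugated-by-$\tau$) quantities as the $\pi_2$-pullbacks, up to the correction $\tau^{-1}(\delta_Q\bfA_2)\tau=\delta_Q(\tau^{-1}\bfA_2\tau)+[\Zeta,\tau^{-1}\bfA_2\tau]$ arising because $\delta_Q$ is a derivation with $\delta_Q\tau=\tau\Zeta$ (from \eqref{27.August.2012.20}), and the analogous correction involving $[\Theta,\tau^{-1}\bfA_2\tau]$ for $d(\tau^{-1}\bfA_2\tau)$; and (iii) the structure equations $d\bfA_i+\bfA_i\wedge\bfA_i=\bfW_i$ together with $\tau^{-1}\bfW_2\tau=\bfW_1$ (the curvature of $\bfA$ is basic, so its two pullbacks to $\bfF^{[2]}$ are conjugate). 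After these substitutions and the disappearance of the commutator-traces, the surviving terms regroup into $\pi_1^*\bfB-\pi_2^*\bfB$: the terms built only from $\bfA_i$ and $d\bfA_i$ collect, via the structure equation, into the $\rTr(\bfPhi\bfW)$ part of $\bfB$, while the terms carrying $\delta_Q$ --- all of which come from the $\delta_Q\Theta$ summand of $d\Zeta$ --- collect into $-\ha\rTr((\delta_Q\bfA)\wedge\bfA)$.

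I expect the bookkeeping in this last step to be the only real obstacle: one must keep careful track of the conjugation-by-$\tau$ factors --- which are not locally constant, so that $d$ and $\delta_Q$ applied to $\tau^{-1}(\cdot)\tau$ each spin off extra $\Theta$- and $\Zeta$-bracket terms --- and of the graded signs produced when cycling Lie-algebra-valued forms of mixed degree inside $\rTr$. It is worth noting at the outset that the splitting \eqref{27.August.2012.25}, $\Zeta=\bfPhi_1-\tau^{-1}\bfPhi_2\tau$, holds precisely because of the Higgs-field transformation law \eqref{27.August.2012.23}; this is the structural input --- replacing the distinguished operator $\log Q$ used in the line-bundle computation of Lemma~\ref{lemma:1-form} --- that both makes $\bfB$ globally well defined and makes the regrouping above possible. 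Everything else is formal manipulation with the trace-defect identity \eqref{20.August.2012.1} and the relations \eqref{27.August.2012.18}, \eqref{27.August.2012.25}, \eqref{27.August.2012.27} already in hand.
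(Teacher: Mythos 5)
Your proposal is correct and follows essentially the same route as the paper: its proof likewise reduces everything to the identity $W-d\mu=\bfB_1-\bfB_2$ on $\bfF^{[2]}$ and establishes it by direct expansion of $d\mu$ using $d\tau=\tau\Theta$, \eqref{27.August.2012.18}, \eqref{27.August.2012.25}, \eqref{27.August.2012.27}, the identity $\rTr\circ\delta_Q\equiv0$ and $\tau^{-1}\bfW_2\tau=\bfW_1$ --- precisely the mechanisms you list, with the primitivity reduction left implicit there. The bookkeeping you defer is carried out explicitly in the paper and regroups exactly as you predict (the $\delta_Q\Theta$ terms giving $-\ha\rTr((\delta_Q\bfA)\wedge\bfA)$ and the $\Zeta$--curvature terms, via the Higgs splitting, giving $\rTr(\bfPhi\bfW)$).
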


\begin{proof} We wish to show that 
\begin{equation}
W-d\mu=\bfB_1-\bfB_2
\label{10.9.2012.8}\end{equation}
so we start by expanding $d\mu:$ 
\begin{multline}
2d\mu=
\rTr(d\Zeta\wedge(\bfA_1+\tau^{-1}\bfA_2\tau))
+\rTr(\Zeta\wedge(\bfW_1+\tau^{-1}\bfW_2\tau)\\
-\rTr(\Zeta(\bfA_1\wedge \bfA_1+\tau ^{-1}\bfA_2\wedge\bfA_2\tau))
-\rTr(\Zeta(\Theta\tau ^{-1} \bfA_2\tau+\tau ^{-1}\bfA_2\tau \Theta)
\label{10.9.2012.9}\end{multline}
since $d\tau=\tau\Theta.$ Now, using \eqref{27.August.2012.18} to expand $d\Zeta,$ 
the first term on the right in \eqref{10.9.2012.9},
with the wedge product now understood, can be written as,
\begin{equation}
\begin{aligned}
\rTr(d\Zeta(\bfA_1+\tau^{-1}&\bfA_2\tau))\\
=&\rTr(\delta_Q\Theta(\bfA_1+\bfA_2'))
+\rTr([\Zeta,\Theta]((\bfA_1+\bfA_2'))\\
=&-\rTr(\Theta(\delta_Q\bfA_1+\tau^{-1}(\delta _Q\bfA_2)\tau))
+\rTr(\Theta([\Zeta,\bfA_2']+[\Zeta,\Theta](\bfA_1+\bfA_2'))\\
=&\rTr((\delta_Q\bfA_1)\bfA_1)
-\rTr((\delta _Q\bfA_2)\bfA_2)
+2\rTr(\Zeta\bfA_1\bfA_1)
\end{aligned}
\label{10.9.2012.11}\end{equation}
where $\bfA_2'=\tau^{-1}\bfA_2\tau.$
Here the identity $\rTr\circ \delta_Q\equiv0$ has been used in the form
\begin{equation*}
\begin{gathered}
\rTr((\delta_Q\bfA_1)\bfA_2'+\bfA_1t^{-1}(\delta _Q\bfA_2)\tau)=
\rTr(\Zeta\bfA_1\bfA_2')+\rTr(\Zeta\bfA_2'\bfA_1)\\
\rTr((\delta _Q\Theta)(\bfA_1+\bfA_2'))=
\rTr((\delta _Q(\bfA_1+\bfA_2')\Theta).
\end{gathered}
\end{equation*}
The last term on the right in \eqref{10.9.2012.9} expands to 
\begin{equation*}
2\rTr(\Zeta\tau ^{-1}\bfA_2\bfA_2\tau)
-\rTr(\Zeta\bfA_1\tau ^{-1}\bfA_2\tau)
-\rTr(\Zeta\tau ^{-1}\bfA_2\tau\bfA_1)
\label{10.9.2012.13}\end{equation*}
therefore
\begin{multline}
2d\mu=\rTr((\delta_Q\bfA_1)\bfA_1)
-\rTr((\delta _Q\bfA_2)\bfA_2)
+\rTr(\Zeta(\bfW_1+\tau^{-1}\bfW_2\tau)+\rTr(\Zeta\Theta \Theta).
\label{10.9.2012.14}\end{multline}

The curvature form $\bfW$ of $\bfA$ is a 2-form with values in the adjoint
bundle so $\tau^{-1}\bfW_2\tau=\bfW_1$ and computing in a local
trivialization where $\bfW=g^{-1}w(x)g$ for a $2$-form $w$ on the base
(with values in the Lie algebra)
$$
\begin{aligned}
\rTr(\Zeta\bfW_1)
&=\rTr(\zeta _1a^{-1}w(x)a-\tau^{-1}\zeta_2\tau(a^{-1}w(x)a)\\
&=\rTr(\zeta _1a^{-1}w(x)a-\zeta _2b^{-1}w(x)b)\\
&=\rTr(\bfPhi_1\bfW_1-\bfPhi_2\bfW_2)
\label{10.9.2012.15}\end{aligned}
$$
for any Higgs field for the derivation $\delta _Q,$
$\bfPhi=z+p^{-1}\phi(x)p$ in the trivialization. Finally then
\eqref{10.9.2012.14} can be rewritten in the form \eqref{10.9.2012.8} 
%
$$
W-d\mu=\left(-\ha\rTr((\delta_Q\bfA_1)\bfA_1)+\rTr(\bfPhi_1\bfW_1)\right)
-\left(-\ha\rTr((\delta _Q\bfA_2)\bfA_2)+\rTr(\bfPhi_2\bfW_2)\right)
$$
%
as claimed.
\end{proof}

With $Q$ fixed, a change of Higgs field to $\bfPhi+\psi$ where $\psi$ is a
section of the adjoint bundle, changes $\bfB$ in \eqref{10.9.2012.7} by a
2-form on $X:$
\begin{equation*}
\bfB+\rTr(\psi\bfW),\quad \rTr(\psi\bfW)\in\CI(X;\Lambda^2).
\label{27.August.2012.28}\end{equation*}
Changing $\bfA$ to another connection $\bfA+\lambda,$ where $\lambda$ is a
section of the adjoint bundle with values in the pull-back of the $1$-form
bundle on $X$ changes the curvature form to
$\bfW+d\lambda+[\bfA,\lambda]+\lambda \wedge \lambda $ and hence changes the
B-field to 
\begin{equation}
\bfB-\ha\rTr((\delta _Q\lambda)\lambda) +
\rTr(\bfPhi(d\lambda+[\bfA,\lambda]+\lambda \wedge \lambda))
\label{27.August.2012.29}\end{equation}
Under change of the normalized derivation on the Lie algebra, the B-field changes to 
\begin{equation}
\bfB-\frac{i}{2\pi} \Tr_R(\tfrac{1}{2} \bfA\wedge [P, \bfA] )
\end{equation}
where $P = \log(Q)-\log(Q')\in\Psi^0(Z;V).$

The bundle gerbe with primitive connection has the property that $d\bfB,$ with
$\bfB$ given here by \eqref{10.9.2012.7}, is necessarily the pull-back of a
3-form, $H$ on $X,$ which represents the deRham class of the Dixmier-Douady
invariant.
  
\begin{theorem}\label{27.August.2012.11} If $\bfF$ is a principal
$\PGL(\cF^0(Z;V))$-bundle over $X,$ the image of the Dixmier-Douady class, for the
central extension \eqref{centralext}, in $\bfH^3(X;\RR)$ is represented by the 3-form  
\begin{equation}
H=H_{Q, \bfPhi,\bfA}=-\frac{i}{2\pi}\rTr(\bfW\wedge\nabla^Q \bfPhi),\qquad
\nabla^Q \bfPhi = d\bfPhi + [\bfA, \bfPhi] - \delta_Q\bfA
\label{27.August.2012.12}\end{equation}
where $\rTr$ is the residue trace on the Lie algebra, $\bfA$ is a connection
on $\bfF$ with curvature $\bfW$ and $\bfPhi$ is a Higgs field on $\bfF$
for the normalized derivation $\delta_Q$ on $\PLA(\cF^0(Z;V)).$

In fact, the triple $(H, \bfB, \tau^*\alpha-\pi^*\mu)$ determines 
the Dixmier-Douady class of $\bfF$, for the
central extension \eqref{centralext},  in $\bfH^3(X;\ZZ)$.
\end{theorem}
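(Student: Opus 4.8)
The plan is to read $H$ off from the B-field of Proposition~\ref{10.9.2012.6}. By that proposition the line bundle $\cL\to\bfF^{[2]}$ carries the primitive connection $\tau^*\alpha-\pi^*\mu$, whose curvature is $\pi_1^*\bfB-\pi_2^*\bfB$ for the $2$-form $\bfB$ of \eqref{10.9.2012.7}; in Murray's bundle-gerbe description (as developed in \cite{MS}) the image of the Dixmier--Douady class in real cohomology is then represented by the unique $3$-form $H$ on $X$ with $\pi^*H=\frac{i}{2\pi}\,d\bfB$, the factor $\frac{i}{2\pi}$ being the one fixed by the normalization of $c_1$ in Lemma~\ref{lemma:curv-central}. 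So the proof reduces to two things: that $d\bfB$ is basic, i.e.\ pulled back from $X$, and the evaluation of $d\bfB$ in terms of $\bfW$, $\bfPhi$ and $\delta_Q\bfA$.

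For basicness I would note that $\pi_1^*(d\bfB)-\pi_2^*(d\bfB)=d(\pi_1^*\bfB-\pi_2^*\bfB)=d(W-d\mu)=dW$, while $W=\tau^*w$ with $w$ closed on $\PGL(\cF^0(Z;V))$ by \eqref{10.9.2012.2} (since $\pi^*w=d\alpha$ and $\pi$ is a submersion, $dw=0$), so $dW=0$ and $d\bfB$ descends. Alternatively, and this also anticipates the target formula, the expression obtained below is \emph{manifestly} basic: it is built from the curvature $\bfW$, which is horizontal and adjoint-valued, and from $\nabla^Q\bfPhi$, which is also horizontal --- contracting with the fundamental vector field $X_\xi$ of $\xi\in\PLA$ gives $\iota_{X_\xi}\nabla^Q\bfPhi=(\delta_Q\xi-[\xi,\bfPhi])+[\xi,\bfPhi]-\delta_Q\xi=0$, using the transformation law \eqref{27.August.2012.23} of $\bfPhi$ together with \eqref{27.August.2012.30}, $\iota_{X_\xi}\bfA=\xi$ and $\iota_{X_\xi}\delta_Q\bfA=\delta_Q\xi$ --- and which transforms in the adjoint representation, so that $\rTr(\bfW\wedge\nabla^Q\bfPhi)$, $\rTr$ being $\Aut$-invariant on the Lie algebra, descends to $X$.

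The computation of $d\bfB$ is the heart of the matter and runs parallel to the calculation inside Proposition~\ref{10.9.2012.6}, but now differentiating $\bfB$ directly rather than verifying the splitting $W-d\mu=\bfB_1-\bfB_2$. I would differentiate \eqref{10.9.2012.7} term by term using the Leibniz rule with the correct Koszul signs, the structure equation $d\bfA=\bfW-\bfA\wedge\bfA$, the Bianchi identity $d\bfW=\bfW\wedge\bfA-\bfA\wedge\bfW$, and the fact that $\delta_Q$ commutes with $d$ (which is exactly \eqref{27.August.2012.30}), and then invoke the two standing properties of the residue trace: $\rTr\circ\delta_Q\equiv0$ and the (graded) trace property $\rTr[\,\cdot\,,\,\cdot\,]=0$. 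In the first term of \eqref{10.9.2012.7} the purely cubic contributions $\rTr((\delta_Q\bfA)\wedge\bfA\wedge\bfA)$ vanish (being $\tfrac13\rTr(\delta_Q(\bfA\wedge\bfA\wedge\bfA))=0$), and the term involving $\delta_Q\bfW$ is converted into one involving $\delta_Q\bfA$ via $\rTr(\delta_Q(\bfW\wedge\bfA))=0$; in the second term $d\bfW$ is replaced by $[\bfW,\bfA]$ and the trace property rewrites $\rTr(\bfPhi\,d\bfW)$ as $\rTr(\bfW\wedge[\bfA,\bfPhi])$. What remains reassembles into $\rTr(\bfW\wedge(d\bfPhi+[\bfA,\bfPhi]-\delta_Q\bfA))=\rTr(\bfW\wedge\nabla^Q\bfPhi)$, which after multiplication by the normalization constant gives \eqref{27.August.2012.12}. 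Finally, independence of $[H]$ from the choices of $\bfA$, $\bfPhi$ and of the normalized family $Q$ has already been recorded: the variation formulas \eqref{27.August.2012.28}, \eqref{27.August.2012.29} and the one following them show that each change alters $H$ only by an exact $3$-form on $X$.

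The main obstacle is the sign and Koszul bookkeeping in the preceding paragraph --- tracking the degrees when permuting cyclically inside $\rTr$ and when applying $d$ and $\delta_Q$ as, respectively, an odd and an even derivation, and in particular getting the sign of the $\delta_Q\bfA$ term right so that it combines with $d\bfPhi+[\bfA,\bfPhi]$ into the twisted covariant derivative $\nabla^Q\bfPhi$; the overall sign and the $\frac{i}{2\pi}$ are then pinned down by consistency with $c_1$ in Lemma~\ref{lemma:curv-central}. A secondary but essential point is the tensoriality of $\nabla^Q\bfPhi$ (horizontality, checked above, together with adjoint-equivariance, verified the same way from \eqref{27.August.2012.23} and \eqref{27.August.2012.30}), since that is precisely what guarantees that $\rTr(\bfW\wedge\nabla^Q\bfPhi)$ is basic and that $H$ does not depend on the local trivialization used in constructing the Higgs field $\bfPhi$.
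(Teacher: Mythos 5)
Your proposal follows essentially the same route as the paper: invoke the bundle-gerbe fact that, for the primitive connection of Proposition~\ref{10.9.2012.6}, $d\bfB$ descends to a $3$-form on $X$ representing the de Rham image of the Dixmier--Douady class, and then compute $d\bfB$ from \eqref{10.9.2012.7} using the Bianchi identity, the trace property, and $\rTr\circ\delta_Q\equiv0$ (in particular the vanishing of $\rTr(\bfA\wedge\bfA\wedge\delta_Q\bfA)$), reassembling the result into $\rTr(\bfW\wedge\nabla^Q\bfPhi)$. The only differences are cosmetic: you spell out the basicness/horizontality check of $\nabla^Q\bfPhi$ and the role of the transgression formulas, which the paper asserts without detail.
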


\begin{proof} As noted above it suffices to compute the deRham differential
  of the B-field in \eqref{10.9.2012.7}:
$$
d\bfB =\frac{i}{2\pi}\left(\ha\rTr(d\bfA\wedge\delta_Q\bfA)
-\ha\rTr(\bfA\wedge\delta_Q d\bfA)+\rTr(\bfPhi\wedge d\bfF +d\bfPhi\wedge \bfF)\right).
$$
Using the trace property and Bianchi identity $d\bfW=[\bfW,\bfA]$
$$
d\bfB =\frac{i}{2\pi}\Tr_R(d\bfA\wedge\delta_Q\bfA-
\bfW\wedge[\bfA,\bfPhi]-\bfW\wedge d\bfPhi),
$$
and since the residue trace of $\bfA\wedge\bfA\wedge\delta_Q\bfA$ vanishes,
$$
d\bfB=\frac{i}{2\pi}\Tr_R(\bfW\wedge\delta_Q\bfA-
\bfW\wedge[\bfA,\bfPhi]-\bfW\wedge d\bfPhi).
$$
This descends to a closed 3-form on $X$ and so gives \eqref{27.August.2012.12}.

For the last statement of the Theorem, observe that the triple $(H, \bfB, \tau^*\alpha-\pi^*\mu)$ determines 
the Deligne class (cf. \cite{Brylinski}) of $\bfF$, which in particular determines its Dixmier-Douady class in $\bfH^3(X;\ZZ)$. 
\end{proof}

The transgression formula for $H_Q$ under change of $Q$ is given by
\begin{equation}\label{eqn:3-curvature}
H_{Q, \bfPhi, \bfA} - H_{Q', \bfPhi, \bfA} = - \frac{i}{2\pi}d\Tr_R(\bfA\wedge[P,\bfA]),
\end{equation}
where $P = \log(Q)-\log(Q').$

The other transgression formulae are:-
\begin{equation}
H_{Q, \bfPhi, \bfA'} - H_{Q, \bfPhi,\bfA} = -\frac{i}{2\pi}d \Tr_R(f \wedge \nabla^Q\bfPhi)
\end{equation}
where $\bfA'-\bfA=f\in \Omega^1(X,\End({\bfPsi^\bbZ})).$ If $\bfPhi'$ is
another choice of Higgs field and $\bfPhi'- \bfPhi = \sigma$, where
$\sigma$ is  a section of the adjoint bundle, then
\begin{equation}
H_{Q, \bfPhi',\bfA} - H_{Q, \bfPhi, \bfA} =-\frac{i}{2\pi} d\Tr_R(\bfW\wedge\sigma).
\end{equation}

Note that $H$ is also the Dixmier-Douady class of the bundle of compact
operators obtained by closing $\bfPsi^{-\infty}$ in the topology of bounded
operators on $L^2(Z;V).$

There are higher characteristic classes for the pseudodifferential algebra
bundle $\bfPsi^\bbZ$ over $X$, which are represented by $H_Q(n)=c_n
\rTr(\bfW^n\wedge\nabla^Q\bfPhi)$ for appropriate constants $c_n.$ These
are closed differential forms of degree$2n +1$ on $X.$ Explicit
transgression formulae can be derived for these forms as above. The
characteristic classes represented by these differential forms may be
non-trivial in cohomology, since $\GL(\cF^0(Z;V))$ need not be
contractible. Analogous results for loop groups have been obtained in
\cite{Vozzo}.

\section{Examples}\label{sect:examples}

The examples below illustrate that the rationalized Dixmier-Douady
invariant computed in Theorem \ref{27.August.2012.11} above does not in general vanish, i.e.\ the
Dixmier-Douady invariant itself may be non-torsion in this setting of
pseudodifferential bundles.

\subsection{Finite dimensional bundle gerbes}
Here we will define the smooth Azumaya bundle and the filtered algebra bundle of 
pseudodifferential operators in the case of a  finite dimensional bundle gerbe, thereby 
giving a large class of examples that satisfy the hypotheses of the main theorem in the paper.

The data we use to define a smooth Azumaya bundle is:-

\begin{itemize}
\item A smooth fiber bundle of compact manifolds
\begin{equation}
\xymatrix{
Z\ar[r]&Y\ar[d]^\phi\\
& X.
}
\end{equation}

\item A primitive line bundle  $J$ over $Y^{[2]}$, in the sense that under
lifting by the three projection maps $\pi_j: Y^{[3]} \to Y^{[2]}$, which omits the 
$j$-th factor,
\begin{equation}
\xymatrix{Y^{[3]}\ar@<2ex>[r]^{\pi_1}
\ar[r]^{\pi_2}
\ar@<-2ex>[r]^{\pi_3}& Y^{[2]}}
\end{equation}
%
there is an isomorphism of line bundles on $Y^{[3]}$
\begin{equation}
\pi_3^*J\otimes\pi_1^*J\cong \pi_2^*J.
\label{gerbeprod}\end{equation}
\item Equation \eqref{gerbeprod} can be viewed as a product, and this product is required to be associative, which is a compatibility condition on $Y^{[4]}$,
\begin{equation}
\xymatrix{
 J_{(y_1,y_2)} \otimes J_{(y_2,y_3)}  \otimes J_{(y_3,y_4)}\ar@{-}[r]\ar[d]^-{\sigma}& J_{(y_1,y_3)}  \otimes J_{(y_3,y_4)} \ar[d]^-{\sigma}\\
J_{(y_1,y_2)}  \otimes J_{(y_2,y_4)} \ar@{-}[r]&J_{(y_1,y_4)},
}
\end{equation}
for all $(y_1, y_2, y_3, y_4) \in Y^{[4]}$.
\end{itemize}
 The data above specifies a {\em bundle gerbe} $(Y/X, J)$ (cf. \cite{MS2}), which in turn 
  determines an infinite rank {\em smooth Azumaya bundle}, $\smA \to X,$  defined in terms of its space of global
sections
\begin{equation}
\CI(X;\smA)=\CI(Y^{[2]};J).
\end{equation}
A local description of this identification is given, in particular, in the proof of Proposition \ref{prop:ex}  below.
It has fibres isomorphic to the algebra of
smoothing operators on the fibre, $Z,$ of $Y$ with Schwartz kernels
consisting of the smooth sections of the primitive line bundle $J$ over $Z^2.$
The primitivity property of $J$ ensures that $\CI(X;\smA)$ is an algebra. 
The completion of this algebra of `smoothing operators' to a bundle with
fibres modelled on the compact operators has Dixmier-Douady invariant
$\delta(Y/X;J) \in \bfH^3(Y;\bbZ).$ 

We now  define the associated projective bundle of
pseudodifferential operators. We do this by direct generalization of
the definition of the smooth Azumaya bundle $\smA$ above.
For any $\bbZ_2$-graded bundle
$\bbE=(E_+,E_-)$ over $Y$ set the projective filtered algebra bundle of 
pseudodifferential operators
\begin{equation}
\Psi^{\ZZ}_J(Y/X; \hom(\bbE))=
I^{\ZZ}(Y^{[2]},\Diag;\Hom(\bbE)\otimes\Omega_R\otimes J)\longrightarrow X
\end{equation}
where $\Hom(\bbE)=E_-\boxtimes E_+'$ over $Y^{[2]}$ and $I^{\bullet}$ is the
space of (classical) conormal distributions. As is typical in projective index theory,
the Schwartz kernel of the projective family of elliptic operators is globally
defined, even though one only has local families of elliptic operators 
with a compatibility condition on triple overlaps given by a phase factor.  

\begin{proposition}\label{prop:ex} In the situation described above, we have the following equality of 
Dixmier-Douady classes,
$$
\delta(\Psi^{\ZZ}_J(Y/X; \hom(\bbE))) = \delta(Y/X; J) \in \bfH^3(X, \ZZ).
$$
\end{proposition}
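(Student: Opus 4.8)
The plan is to exhibit an explicit principal $\PGL(\cF^0(Z;V))$-bundle whose associated filtered pseudodifferential bundle is $\Psi^{\ZZ}_J(Y/X;\hom(\bbE))$, and to verify that its Dixmier-Douady class, as computed by the \v Cech recipe of Definition~\ref{def-Cech-DD}, agrees with the bundle gerbe class $\delta(Y/X;J)$. The key point is that both invariants are computed from the \emph{same} line bundle $J$ over $Y^{[2]}$, so the equality should reduce to matching up two cocycle descriptions of $c_1(J)$ in the form of a class in $H^2(X;\underline{\bbC^*})$.

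First I would choose a good open cover $\{U_i\}$ of $X$ over which $Y$ trivializes, $\phi^{-1}(U_i)\cong U_i\times Z$, compatibly with a trivialization of the graded bundle $\bbE$. Over $U_i\cap U_j$ the two trivializations differ by a map into the diffeomorphisms of $Z$ (covering the identity of the base), and the primitive line bundle $J$, restricted to the fibre product of $\phi^{-1}(U_i\cap U_j)$ with itself, is trivial (shrinking the cover), with a chosen trivializing section $s_{ij}$. The conormal distribution description of $\Psi^{\ZZ}_J$ then shows that over $U_i\cap U_j$ the transition data for the pseudodifferential bundle is given by conjugation by an invertible Fourier integral operator $\hat g_{ij}$ on $Z$ acting on sections of $E$, built from the fibrewise diffeomorphism together with the section $s_{ij}$ of $J$; this is precisely the content of Theorem~\ref{thm:DS} applied fibrewise, and it exhibits $\Psi^{\ZZ}_J(Y/X;\hom(\bbE))$ as the filtered bundle of pseudodifferential algebras associated to the $\PGL(\cF^0(Z;V))$-cocycle $g_{ij}=[\hat g_{ij}]$.

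Next I would compute both Dixmier-Douady classes from this data. On the one hand, by Definition~\ref{def-Cech-DD} the class $\delta(\Psi^{\ZZ}_J(Y/X;\hom(\bbE)))$ is represented by the \v Cech $2$-cocycle $c_{ijk}\in\bbC^*$ determined by $\hat g_{ij}\hat g_{jk}\hat g_{ki}=c_{ijk}\Id$. On the other hand, the failure of the chosen sections $s_{ij}$ of $J$ to satisfy the cocycle condition under the primitivity isomorphism $\pi_S^*J\otimes\pi_F^*J=\pi_C^*J$ is exactly a \v Cech $2$-cocycle representing $\delta(Y/X;J)$ — this is the standard description of the Dixmier-Douady class of a bundle gerbe. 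The heart of the argument is to check that these two cocycles coincide: the composite $\hat g_{ij}\hat g_{jk}\hat g_{ki}$ corresponds to the identity diffeomorphism of $Z$ (the fibrewise diffeomorphisms cancel), so it is a multiple of the identity, and tracing through the Fourier-integral-operator construction the scalar is precisely the phase obstruction coming from $s_{ij}s_{jk}s_{ki}$ under the primitivity isomorphism. Hence $[c_{ijk}]=\delta(Y/X;J)$ in $H^2(X;\underline{\bbC^*})\cong H^3(X;\ZZ)$.

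The main obstacle I anticipate is the bookkeeping in the third step: one must check carefully that the Fourier integral operator representing $\hat g_{ij}$ can be chosen so that the \emph{only} scalar ambiguity in $\hat g_{ij}\hat g_{jk}\hat g_{ki}$ is the one contributed by $J$, i.e.\ that no extra phase enters from the choice of Fourier integral operator quantizing a fibrewise diffeomorphism (for instance, from the Maslov/half-density normalization). This is where the reduction to order $0$ from Proposition~\ref{RW.25} and the freedom to adjust $\hat g_{ij}$ by pseudodifferential operators in $\Psi^0(Z;V)$ — which have trivial phase cocycle — is used: any such normalization ambiguity is a coboundary in $\bbC^*$ and so does not affect the class. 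Once this is pinned down, the equality of the two classes follows by comparing representatives, completing the proof.
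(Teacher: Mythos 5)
The step that breaks is your assertion that $J$ restricted to the fibre product $\phi^{-1}(U_i\cap U_j)\times_{U_i\cap U_j}\phi^{-1}(U_i\cap U_j)$ becomes trivial after shrinking the cover. Shrinking open sets of $X$ cannot remove nontriviality of $J$ along the fibres $Z\times Z$: already for the product fibration $Y=X\times Z$ with $J=L\boxtimes L'$, $L$ a nontrivial line bundle on $Z$, the line bundle $J$ is primitive and its Dixmier-Douady class vanishes, yet $J$ is nontrivial on the fibre product over every open subset of $X$. What primitivity together with a local section $s_i:U_i\to\phi^{-1}(U_i)$ actually yields is only a splitting $J|_{V_i}\cong K_i\boxtimes K_i'$, where $K_i$ is a line bundle on $\phi^{-1}(U_i)$ which is in general nontrivial on the fibres. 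So the trivializing sections $s_{ij}$ you use do not exist in general, the operators $\hat g_{ij}$ ``built from $s_{ij}$'' are not defined as stated, and the identification of $\delta(Y/X;J)$ with ``the failure of the $s_{ij}$ to satisfy the cocycle condition'' is not the correct description of the bundle-gerbe class; the scalar cocycle only appears after passing to the line bundles $J_{ij}=(s_i,s_j)^*J$ on the base overlaps and trivializing those. (A related bookkeeping point: since the local algebra is $\Psi^{\ZZ}(\phi^{-1}(U_i);\hom(\bbE\otimes K_i))$, the typical fibre must be modelled on sections of $E\otimes K_i|_Z$, not of $E$.)

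The repair is essentially the paper's proof: use $s_i$ to untwist locally, so that over $U_i$ one has an honest family of fibrewise pseudodifferential operators acting on $\bbE\otimes K_i$; on overlaps $K_i=K_j\otimes\phi^*J_{ij}$ with $J_{ij}$ a line bundle on $U_i\cap U_j$, so the obstruction to patching these local families — equivalently, to finding a global line bundle on $Y$ restricting to the $K_i$ — is exactly the gerbe $\{J_{ij}\}$, which is by definition the gerbe of $(Y/X,J)$, whence the equality of Dixmier-Douady classes. If you want the \v Cech $\bbC^*$-cocycle of Definition~\ref{def-Cech-DD}, it arises only at this second stage, by trivializing the $J_{ij}$ over (shrunken) double overlaps of the base. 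With that insertion your remaining steps are sound: the transition maps are conjugations by pull-back under fibrewise diffeomorphisms tensored with bundle isomorphisms (so no Maslov factor enters at all), and any residual scalar ambiguity in the lifts $\hat g_{ij}$ changes the cocycle only by a coboundary, as you say; the argument then reproduces the paper's proof in \v Cech form rather than giving a genuinely different one.
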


\begin{proof}

Now the gerbe associated to the bundle gerbe $(Y/X; J)$ is defined as follows: let $s_i:U_i \to \phi^{-1}({U_i})$ be a local smooth section of the fibre bundle
$Y\stackrel{\phi}{\longrightarrow} X$, where $U_i $ is an open subset of $X$. Then the gerbe associated to the bundle gerbe $(Y/X, J)$ is 
the collection of line bundles $J_{ij} = (s_i, s_j)^*J$ on the double overlaps $U_i\cap U_j$. 

The section $s_i :U_i\longrightarrow \phi^{-1}(U_i),$ induces an isomorphism of $J$ over the open
  subset $V_i = \phi^{-1}(U_i) \times_{U_i} \phi^{-1}(U_i)$ of $Y^{[2]},$ with
\begin{equation}
J\big|_{V_i}\cong_s \Hom(K_{i})=K_{i}\boxtimes K_{i}'
\end{equation}
for a line bundle $K_i$ over $\phi^{-1}(U_i)\subset Y,$ where $K_i'$
denotes the line bundle dual to $K_i.$ Another choice of section
$s_j:U_j\longrightarrow \phi^{-1}(U_j),$ determines another line bundle
$K_{j}$ over $\phi^{-1}(U_j)\subset Y,$ satisfying
\begin{equation}\label{split}
K_{i} = K_{j} \otimes \phi^*(J_{ij}),
\end{equation}
where $J_{ij}= (s_i, s_j)^*J$ is the fixed local line bundle
over $U_{ij}.$

It follows that locally, $\Psi^\ZZ_J$ is a family of pseudodifferential operators acting fibrewise on $\phi^{-1}(U_i)$,
$$\Psi^{\ZZ}_J(\phi^{-1}(U_i); \hom(\bbE)) = \Psi^{\ZZ}(\phi^{-1}(U_i); \hom(\bbE\otimes K_i))$$
and by equation \eqref{split} that it fails to be a global family of pseudodifferential operators acting fibrewise on $Y$, 
since by \eqref{split}, there is no global line bundle on $Y$ that restricts to each of the local line bundles $K_j$.
Therefore the gerbe corresponding to the projective bundle $\Psi^{\ZZ}_J(Y/X; \hom(\bbE))$, which is the obstruction 
to finding a global line bundle on $Y$ restricting to each of the local line bundles $K_j$, is exactly
the collection of line bundles $J_{ij} = (s_i, s_j)^*J$ on the double overlaps $U_i\cap U_j$, proving the proposition.

\end{proof}

\subsection{Smooth Azumaya bundle for a sum of decomposable elements}\label{decomp} 
Here we outline an extension of the geometric setting in \cite{MMS2} which defines a smooth Azumaya bundle whose Dixmier-Douady invariant is a sum of decomposable classes. This is essential, since a general element in the cup product $\bfH^2(X;\bbZ) \cup \bfH^1(X;\bbZ) \subset \bfH^3(X;\bbZ) $
is of this form.

The data we use to define a smooth Azumaya bundle is:-
\begin{itemize}
\item 
A smooth function 
\begin{equation}
u\in\CI(X;\UU(1)^N)
\label{mms2002.115}\end{equation}
the homotopy class of which represents $\alpha\in \bfH^1(X,\bbZ^N).$\\
Equivalently, $u$ defines a regular covering space 
\begin{equation}
\xymatrix{
\bbZ^N\ar[r]& \hat X\ar[d]^\tau\\
& X
}
\label{mms2002.86}\end{equation}

\item A principal torus bundle bundle (later with connection)
\begin{equation}
\xymatrix{
\UU(1)^N\ar[r]& P\ar[d]^p\\
& X
}
\label{mms2002.87}\end{equation}
with Chern class $\beta \in \bfH^2(X;\bbZ^N).$
\item A smooth fiber bundle of compact manifolds
\begin{equation}
\xymatrix{
Z\ar[r]&Y\ar[d]^\phi\\
& X
}
\label{mms2002.59}\end{equation}
such that $\phi^*\beta =0$ in $\bfH^2(Y;\bbZ^N).$
\item An explicit global trivialization 
\begin{equation}
\gamma:\phi^*(P)\overset{\simeq}\longrightarrow Y\times\UU(1)^N.
\label{mms2002.88}\end{equation}
\end{itemize}
 The data \eqref{mms2002.115} -- \eqref{mms2002.88} are shown
below to determine an infinite rank {\em smooth Azumaya bundle}, which we
denote $\smA(\gamma).$ It has fibres isomorphic to the algebra of
smoothing operators on the fibre, $Z,$ of $Y$ with Schwartz kernels
consisting of the smooth sections of a line bundle $J(\gamma)$ over $Z^{[2]}.$
The completion of this algebra of `smoothing operators' to a bundle with
fibres modelled on the compact operators has Dixmier-Douady invariant
$\langle\alpha, \beta\rangle \in \bfH^3(Y;\bbZ),$ where 
$\langle , \rangle$ is the pairing $\bfH^2(X; \bbZ^N) \times \bfH^1(X; \bbZ^N)
\longrightarrow \bfH^3(X; \bbZ)$ given by cup product and the choice of $\ZZ$-bilinear pairing
$\bbZ^N \times \bbZ^N \mapsto \bbZ$ given by $(m, \ell) \mapsto m_1\ell_1 + \ldots m_N \ell_N$.

An explicit trivialization of the lift, $\gamma,$
as in \eqref{mms2002.87} is equivalent to a global section which is the
preimage under $\gamma$ of the identity element of the torus $\UU(1)^N$:
\begin{equation}
s':Y \longrightarrow \phi^*(P).
\label{mms2002.56}\end{equation}

Over each fiber of $Y,$ the image is fixed so this determines a map
$$
s(z_1,z_2)=s'(z_1)(s'(z_2))^{-1}
$$
which is well-defined on the fiber product and is a groupoid character:
\begin{equation}
\begin{gathered}
s:Y^{[2]}\longrightarrow
\UU(1)^N,\\
s(z_1,z_2)s(z_2,z_3)=s(z_1,z_3)\ \forall\ z_i\in Y\Mwith
\phi(z_i)=x,\ i=1,2,3,\ \forall\ x\in X.
\end{gathered}
\label{mms2002.57}\end{equation}
Conversely one can start with a unitary character $s$ of the groupoid $Y^{[2]}$ and
recover the principal torus bundle $P$ as the associated bundle
\begin{equation}
\begin{gathered}
P=Y\times\UU(1)^N/\simeq_s,\\
(z_1,t)\simeq_s(z_2,s(z_2,z_1)t)\ \forall\ t\in\UU(1)^N,\ \phi(z_1)= \phi(z_2).
\end{gathered}
\label{mms2002.58}\end{equation}

Now, let $Q=Y^{[2]}\times_X\hat X$ be the fiber product of $Y^{[2]}$ and
$\hat X,$ so as a bundle over $X$ it has typical fiber $Z^2\times\bbZ^N;$ it
is also a principal $\bbZ^N$-bundle over $Y^{[2]}.$ The data above determines
an action of $\bbZ^N$ on the trivial bundle $Q\times\bbC$ over $Q,$
namely  
\begin{equation}
T_n:(z_1,z_2,\hat x;w)\longrightarrow (z_1,z_2,\hat x+n,\langle\langle s(z_1,z_2), n \rangle\rangle w)\
\forall\ n\in \bbZ^N,
\label{mms2002.63}\end{equation}
where $\langle\langle \cdot , \cdot\rangle\rangle \colon \UU(1)^N \times \bbZ^N 
\longrightarrow \UU(1)$ denotes the Pontrjagin duality pairing between
the Pontrjagin dual groups $\UU(1)^N$ and $\bbZ^N$.

Let $J$ be the associated line bundle over $Y^{[2]}$ 
\begin{equation}
J =(Q\times\bbC)/\simeq,\quad (z_1,z_2,\hat x;w)\simeq T_n(z_1,z_2,\hat x;w)\
\forall\ n\in \bbZ^N
\end{equation}
The fiber of $J$ at $(z_1, z_2) \in Y^{[2]}$ such that $\phi(z_1)=\phi(z_2)=x$ is 
\begin{equation}
J_{z_1,z_2}=\hat X_x\times\bbC/\simeq,\quad (\hat x+n,w)
\simeq (\hat x,\langle\langle s(z_1,z_2), -n \rangle\rangle w).
\label{mms2002.74}\end{equation}
Note that this primitive line bundle \emph{does depend} on the trivialization data in
\eqref{mms2002.87}; we will therefore denote it $J(\gamma).$ 

This line bundle is \emph{primitive} in the sense that under
lifting by the three projection maps
\begin{equation}
\xymatrix{Y^{[3]}\ar@<2ex>[r]^{\pi_S}
\ar[r]^{\pi_C}
\ar@<-2ex>[r]^{\pi_F}& Y^{[2]}}
\label{mms2002.138}\end{equation}
(corresponding respectively to the left two, the outer two and the right
two factors) there is a natural isomorphism 
\begin{equation}
\pi_S^*J\otimes\pi_F^*J=\pi_C^*J.
\label{mms2002.139}\end{equation}
%
which gives the identification 
\begin{equation}
J_{(z,z'')}\otimes J_{(z'',z')}\simeq J_{(z,z')}.
\label{mms2002.191}\end{equation}
%

As remarked above, $J(\gamma),$ 
depends on the particular global trivialization \eqref{mms2002.87}. Two
trivializations, $\gamma _i,$ $i=1,2$ as in \eqref{mms2002.87} determine
\begin{equation}
\gamma _{12}:Y\longrightarrow \UU(1)^N,\ \gamma _{12}(y)\gamma_2(y)=\gamma_1(y)
\label{mms2002.199}\end{equation}
which fixes an element $[\gamma_{12}]\in \bfH^1(Y;\bbZ^N)$ and hence a line
bundle $K_{12}$ over $Y$ with Chern class $\langle [\gamma_{12}], 
[\phi^*\alpha]\rangle \in \bfH^2(Y;\bbZ).$ Then 
\begin{equation}
J(\gamma _2)\simeq (K_{12}^{-1}\boxtimes K_{12})\otimes J(\gamma _1)
\label{mms2002.201}\end{equation}
with the isomorphism consistent with primitivity.

That is, we have constructed a finite dimensional bundle gerbe $(Y/X;J)$
%
%
with Dixmier-Douady invariant 
$$ \delta(Y/X;J) = \langle \alpha,\beta\rangle \in \bfH^3(X;\bbZ).$$ 
%
As in the previous subsection, we can also define projective bundles of pseudodifferential operators
in this setup.

\subsection{A canonical example}

The following extends the discussion of an example  in \cite{MMS2}.
In particular,  let $\phi: Y \rightarrow X$ be a fibre bundle of compact  
manifolds,
with typical fiber a compact Riemann surface $\Sigma_g$
of genus $g\ge 2$. Then $T(Y/X)$ is an oriented rank 2 bundle over $Y.$
Define $\beta = \phi_*(e \cup e) \in \bfH^2(X, \bbZ)$, where
$e := e(T(Y/X)) \in \bfH^2(Y, \bbZ)$ is the Euler class of $T(Y/X)$. By  
naturality of this
construction, $\beta = f^*(e_1)$, where $e_1 \in \bfH^2(B{\rm Diff} 
(\Sigma_g), \bbZ)$
and $f\colon X \to B{\rm Diff}(\Sigma_g)$ is the classifying map for $ 
\phi:Y \to X$.
$e_1$ is known as the universal first Mumford-Morita-Miller class,
and $\beta$ is the first Mumford-Morita-Miller class of $\phi: Y  
\rightarrow X$, cf\@.~Chapter 4 in \cite{Morita}.
Therefore by Lemma 14 in \cite{MMS2}, we have the following.

\begin{lemma}
In the notation above, let $\phi: Y \rightarrow X$ be a fibre bundle of  
compact manifolds,
with typical fiber a compact Riemann surface $\Sigma_g$
of genus $g\ge 2$, and let $\beta \in \bfH^2(X, \bbZ)$ be a multiple of the
first Mumford-Morita-Miller class of $\phi: Y \rightarrow X$.
Then $\phi^*(\beta) = 0$ in $\bfH^2(Y, \bbZ)$.
\end{lemma}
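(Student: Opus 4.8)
The plan is to reduce the assertion to a cohomology computation on the fibre product $Y^{[2]} = Y\times_X Y$, on which the pulled-back surface bundle carries a tautological section. Since an integer multiple of the first Mumford-Morita-Miller class $\phi_*(e\cup e)$, with $e = e(T(Y/X))\in H^2(Y;\bbZ)$, pulls back to the corresponding multiple, it suffices to show $\phi^*\phi_*(e\cup e) = 0$. First I would note that $f\circ\phi\colon Y\to B{\rm Diff}(\Sigma_g)$ classifies the bundle $\pi = p_1\colon Y^{[2]}\to Y$, whose vertical tangent bundle is $p_2^*T(Y/X)$; hence by naturality of fibre integration (equivalently, base change in the Cartesian square over $\phi$)
\[
\phi^*\phi_*(e\cup e)\;=\;(f\circ\phi)^* e_1\;=\;\pi_*\bigl((p_2^*e)\cup(p_2^*e)\bigr),
\]
the first Mumford-Morita-Miller class of $\pi$. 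The gain is that $\pi$ now has the diagonal section $\Delta\colon Y\to Y^{[2]}$.

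Next I would record the standard identities attached to the diagonal. Let $\eta\in H^2(Y^{[2]};\bbZ)$ be the Thom class of $\Delta(Y)$, a submanifold of codimension $\dim Z = 2$ whose normal bundle is $T(Y/X)$; then $\Delta^*\eta = e$, and $\Delta^*(p_i^*e) = e$ because $p_i\circ\Delta = \mathrm{id}$. The projection formula for $\Delta$ gives $(p_i^*e)\cup\eta = \eta\cup\eta$ and $\pi_*\eta = 1$, ``integration against the diagonal'' gives $\pi_*(\eta\cup\gamma) = \Delta^*\gamma$, and $\pi_*\circ\pi^* = 0$ in degree $\le 2$. The crucial point is that the class $\zeta := p_2^*e - \chi\,\eta$, with $\chi = \chi(\Sigma_g) = 2-2g$, restricts to $0$ on each fibre of $\pi$, since both $p_2^*e$ and $\chi\,\eta$ restrict there to $\chi$ times the fundamental cohomology class of $\Sigma_g$.

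Expanding $\pi_*\bigl((\chi\eta+\zeta)^2\bigr)$ and applying the identities above collapses every term except $\pi_*(\zeta^2)$, and it is the evaluation of this last term that I expect to be the real obstacle. Because $\zeta$ restricts trivially to the fibres, $\zeta^2$ lies one further step down the Leray filtration of $\pi$, so $\pi_*(\zeta^2)$ is governed by the image of the cup square of the leading part of $\zeta$ in $E_\infty^{2,2}$; here one must invoke the ring structure of $H^*(\Sigma_g)$ — that $H^4(\Sigma_g) = 0$ and that the residual contribution is mediated by the skew-symmetric intersection pairing $H^1(\Sigma_g)\otimes H^1(\Sigma_g)\to H^2(\Sigma_g)$ — in order to see that the various contributions cancel. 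This is precisely the cohomological content of Lemma~14 of \cite{MMS2}, which I would either reconstruct along these lines or simply cite; the resulting class on $Y$ is $\phi^*\phi_*(e\cup e)$, so it vanishes, and the statement for an arbitrary multiple of $\beta$ is then immediate.
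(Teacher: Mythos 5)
Your opening reduction is fine: base change in the Cartesian square does give $\phi^*\phi_*(e\cup e)=\pi_*\bigl((p_2^*e)\cup(p_2^*e)\bigr)$ for $\pi=p_1\colon Y^{[2]}\to Y$, and the diagonal identities you list are correct. But the bookkeeping that follows is not. Writing $p_2^*e=\chi\,\eta+\zeta$, those same identities give $\pi_*(\eta\cup\eta)=\Delta^*\eta=e$ and $\pi_*(\eta\cup\zeta)=\Delta^*\zeta=(1-\chi)e$, so that $\pi_*\bigl((\chi\eta+\zeta)^2\bigr)=\chi(2-\chi)\,e+\pi_*(\zeta\cup\zeta)$ with $\chi(2-\chi)=2g(2-2g)\neq 0$: the terms do \emph{not} collapse to $\pi_*(\zeta^2)$ alone. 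Nor does $\pi_*(\zeta^2)$ vanish for the reason you give. Its obstruction sits in $E_\infty^{2,2}$ and is the square of the $(1,1)$-part of $\zeta$ under the intersection pairing on $H^1(\Sigma_g)$; because the sign from anticommutativity of odd-degree classes cancels the skew-symmetry of that coefficient pairing, the induced form on $H^1$ of the base with local coefficients is \emph{symmetric}, and by the Chern--Hirzebruch--Serre/Meyer signature formula it is exactly this term that produces surface bundles over surfaces with nonzero signature. So there is no cancellation of the kind you are counting on.

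The problem is not just technical: no computation on $Y^{[2]}$ can deliver the asserted vanishing, because $\pi_*\bigl((p_2^*e)^2\bigr)$ is the first Mumford--Morita--Miller class of the pulled-back bundle $\pi\colon Y^{[2]}\to Y$, and surface bundles with a section need not have vanishing $e_1$. Directly: the projection formula gives $\phi_*\bigl(\phi^*\phi_*(e\cup e)\cup e\bigr)=(2-2g)\,\phi_*(e\cup e)$, so $\phi^*\phi_*(e\cup e)=0$ would force $(2g-2)\,\phi_*(e\cup e)=0$ in $H^2(X;\bbZ)$. Hence for any bundle with non-torsion $e_1$ --- for instance the surface bundles over surfaces with nonzero signature that the paper itself invokes just after this lemma via Morita's Proposition 4.11 --- the pullback of every nonzero multiple of $e_1$ is nonzero, and your proposed cancellations would contradict this. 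Note also that the paper offers no argument for this lemma beyond the citation of Lemma~14 of \cite{MMS2}, so your closing option of ``simply citing'' that lemma is all the paper does; the reconstruction you sketch is not that argument and, as written, is incorrect. If you pursue this route you should instead make explicit the constraint $(2g-2)\,\phi_*(e\cup e)=0$ that the asserted vanishing actually imposes, rather than assert the collapse of the expansion.
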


Let $\phi:Y\longrightarrow X$
be as above, and $X$ be a closed Riemann surface.
Then Proposition 4.11 in \cite{Morita} asserts that $\langle e_1, [X] 
\rangle = {\rm Sign}(Y)$, where
$ {\rm Sign}(Y)$ is the signature of the 4-dimensional manifold $Y$,  
which is originally a result of
Atiyah, cf. \cite{Morita}. As a consequence, Morita is able to produce infinitely many  
surface bundles $Y$ over
$X$ that have non-trivial  first Mumford-Morita-Miller class.

On the other hand, given any $\beta \in \bfH^2(X, \bbZ)$, we know that  
there is
a fibre bundle of compact manifolds
$\phi: Y \rightarrow X$ such that $\phi^*(\beta)=0$ in $\bfH^2(Y, \bbZ)$.
In fact we can choose $Y$ to be the total space of a principal ${\rm U} 
(n)$ bundle
over $X$ with first Chern class $\beta$.  Here we can also replace $ 
\UU(n)$
by any compact Lie group $G$ such that $\bfH^1(G, \bbZ)$ is nontrivial  
and torsion-free,
such as the torus $\bbT^n$.

\begin{lemma}
Let $\phi: Y \rightarrow X$ be a fibre bundle of compact manifolds with  
typical fiber a compact Riemann surface
$\Sigma$ of genus $g\ge 2$ and $\beta \in \bfH^2(X, \bbZ)$.
Let $\pi: P \to X$ be a principal ${\rm U}(n)$-bundle whose first  
Chern class is $\beta$. Then the fibred
product  $\phi \times \pi : Y\times_X P \to X$ is a fiber bundle with  
typical fiber $\Sigma \times {\rm U}(n)$,
and has the property that $(\phi \times \pi )^*(\beta) = 0$ in $\bfH^2(Y 
\times_X P, \bbZ).$
\end{lemma}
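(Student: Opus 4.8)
The plan is to prove the two assertions separately, both by elementary bundle theory. For the statement that $\phi\times\pi\colon Y\times_X P\to X$ is a fibre bundle with typical fibre $\Sigma\times\UU(n)$, I would take an open cover $\{U_a\}$ of $X$ over which both $\phi\colon Y\to X$ and $\pi\colon P\to X$ trivialize, with local trivializations $\phi^{-1}(U_a)\cong U_a\times\Sigma$ and $\pi^{-1}(U_a)\cong U_a\times\UU(n)$. Over each $U_a$ the fibred product is then identified with $U_a\times(\Sigma\times\UU(n))$, and the induced transition maps over $U_a\cap U_b$ are the pairs of the transition maps of $Y$ and of $P$, landing in $\operatorname{Diff}(\Sigma)\times\UU(n)\subset\operatorname{Diff}(\Sigma\times\UU(n))$ (the $\UU(n)$ factor acting on itself by left translation). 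This exhibits $\phi\times\pi$ as a smooth fibre bundle of compact manifolds with typical fibre $\Sigma\times\UU(n)$.

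For the vanishing $(\phi\times\pi)^*\beta=0$ the key observation is that $\phi\times\pi$ factors through the projection $q_P\colon Y\times_X P\to P$ onto the second factor: explicitly $\phi\times\pi=\pi\circ q_P$. Hence $(\phi\times\pi)^*\beta=q_P^*\bigl(\pi^*\beta\bigr)$, and it suffices to prove that $\pi^*\beta=0$ in $\Ht^2(P;\bbZ)$. Now $\beta=c_1(P)$ is a \emph{characteristic} class of the principal $\UU(n)$-bundle $P$ — for instance the first Chern class of the line bundle $P\times_{\UU(n)}\bbC$ associated to the determinant character $\UU(n)\to\UU(1)$ — so it is natural under pull-back, giving $\pi^*\beta=c_1(\pi^*P)$. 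But the pull-back of $P$ to its own total space is canonically trivial: the tautological section $p\mapsto(p,p)\in\pi^*P=\{(p,p')\in P\times P:\pi(p)=\pi(p')\}$ splits the bundle projection $\pi^*P\to P$. Therefore $c_1(\pi^*P)=0$, so $\pi^*\beta=0$ and $(\phi\times\pi)^*\beta=0$, as claimed.

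There is no serious obstacle here; the only point requiring a little care is the interpretation of $\beta=c_1(P)$ as a characteristic class so that naturality applies and it pulls back to zero over the total space of $P$. Alternatively one could deduce $\pi^*\beta=0$ from the Gysin (Leray--Hirsch) argument for the bundle $P\to X$, but the tautological-section argument is the most economical, and the case $n=1$ is precisely the observation already used in \cite{MMS2} that $\phi^*\beta=0$ when $Y=P$ is a circle bundle with Euler class $\beta$.
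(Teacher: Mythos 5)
Your proof is correct and takes essentially the same route as the paper: the paper's entire proof is the commutativity of the square $Y\times_X P\to P\to X$, i.e.\ the factorization $\phi\times\pi=\pi\circ pr_2$, with the remaining points left implicit. You simply spell out those implicit points — the local triviality of the fibred product and the vanishing $\pi^*\beta=c_1(\pi^*P)=0$ via the tautological section of $\pi^*P\to P$ — so there is nothing to correct.
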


This follows from the obvious commutativity of the
  following diagram,
  \begin{equation}
\begin{CD}
Y\times_X P  @>{pr_1}>> Y \\
       @V{pr_2}VV          @VV{\phi}V     \\
P   @>\pi>>  X.
\end{CD}\end{equation}

The construction of the universal fibre bundle of Riemann surfaces which we
will describe next, is well known, cf. \cite{ASZ, AS84, F86}.  Let $\Sigma$
be a compact Riemann surface of genus $g$ greater than $1,$ $\fM_{(-1)}$ the
space of all hyperbolic metrics on $\Sigma$ of curvature equal to $-1,$ and
${\rm Diff}_+(\Sigma)$ the group of all orientation preserving
diffeomorphisms of $\Sigma$. Then the quotient
$$
\fM_{(-1)}/{\rm Diff}_+(\Sigma) =  \cM_g
$$ 
is a noncompact orbifold, namely the moduli space of Riemann surfaces of 
genus equal to $g$.
The fact that $\cM_g$ has singularities can be dealt with in several ways, 
for instance by going to a finite smooth cover, and the noncompactness of $\cM_g$
can be dealt with for instance by considering compact submanifolds. We will 
however not deal with these delicate issues in the discussion below.
The group ${\rm Diff}_+(\Sigma)$
also acts on $\Sigma \times \fM_{(-1)}$ via $g(z, h) = (g(z), g^*h)$
and the resulting smooth fibre bundle, 
\begin{equation}
\pi: Y = (\Sigma \times \fM_{(-1)})/{\rm Diff}_+(\Sigma) \longrightarrow 
\fM_{(-1)}/{\rm Diff}_+(\Sigma) =\cM_g
\label{UniBun}
\end{equation}
is the {\em universal bundle} of genus $g$ Riemann surfaces. The classifying map for
\eqref{UniBun} is the identity map on $\cM_g$ so $\pi$ is maximally nontrivial in a sense made
precise below.  

As before, let
$$
e_1=e_1(Y/\cM_g) = \pi_*(e\cup e) \in \bfH^2(\cM_g; \bbZ)
$$
be the first Mumford-Morita-Miller class of $\pi\colon 
 Y \to \cM_g.$

A theorem of Harer \cite{Harer, Morita} asserts that:
\begin{align*}
\bfH^2(\cM_g; \bbQ) & = \bbQ(e_1);\\ 
\bfH^1(\cM_g; \bbQ) & =\{0\}.
\end{align*}
Our next goal is to define 
a line bundle $\cL$ over $\cM_g$ such that $c_1(\cL) =k e_1$ for some 
$k\in \ZZ$. This line 
bundle then automatically has the property that $\pi^*(\cL)$ is trivializable since $e_1$ 
is a characteristic class of the fibre bundle $\pi:Y\longrightarrow\cM_g$.
This is exactly the data needed to define a projective family of Dirac
operators. The line bundle $\cL$ turns out to be a power of the determinant
line bundle of the virtual vector bundle $\Lambda$ known as the Hodge
bundle, which is defined using the Gysin map in K-theory
$$
\Lambda=\pi_!(T(Y/ \cM_g)) \in K^0( \cM_g).
$$
Then $\det(\Lambda)$ is a
line bundle over $\cM_g.$  Next we need the following special
Grothendieck-Riemann-Roch (GRR) calculation, cf. \cite{MMS2} Appendix C,
Lemma 15.

\begin{lemma}
In the notation above, one has the following identity of first Chern classes,
$$
 c_1( \pi_!(T(Y/\cM_g)) 
 =\frac{13}{12} \pi_*(c_1(T(Y/\cM_g))^2).
 $$
 \end{lemma}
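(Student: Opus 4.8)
The plan is to derive the identity from the Grothendieck--Riemann--Roch theorem (equivalently the family index theorem) applied to the fibration $\pi\colon Y\longrightarrow\cM_g$ and the line bundle $L:=T(Y/\cM_g)$. The fibrewise complex structure coming from the hyperbolic metrics parametrised by $\fM_{(-1)}$ makes $L$ a complex line bundle over $Y$, which is exactly the data defining the $K$-theory Gysin map $\pi_!$ in the statement; since $\Td$ in GRR is computed on the relative tangent bundle, which here is $L$ itself, the theorem takes the form
\[
\Ch(\pi_! L)=\pi_*\bigl(\Ch(L)\,\Td(L)\bigr),
\]
where $\pi_*$ is integration over the fibre $\Sigma$ and lowers cohomological degree by $2$. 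To avoid the fact that $\cM_g$ is only an orbifold (and noncompact) one passes first to a finite smooth manifold cover, or restricts to a compact submanifold; this does not affect the identity of rational characteristic classes.

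Writing $e:=c_1(L)=e(T(Y/\cM_g))\in H^2(Y;\bbQ)$ and using that $\Sigma$ has real dimension $2$, so that for this Lemma only the part of $\Ch(L)\Td(L)$ of degree at most $4$ matters, it suffices to truncate
\[
\Ch(L)=1+e+\tfrac12 e^2+\cdots,\qquad \Td(L)=1+\tfrac12 e+\tfrac1{12}e^2+\cdots .
\]
Multiplying, the degree-$2$ component of the product is $\tfrac32 e$ and the degree-$4$ component is $\tfrac12 e^2+e\cdot\tfrac12 e+\tfrac1{12}e^2=\tfrac{13}{12}e^2$. Applying $\pi_*$ and reading off the degree-$2$ part of $\Ch(\pi_! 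L)$, which equals $c_1(\pi_! L)$ since $\Ch_1=c_1$, yields
\[
c_1\bigl(\pi_!(T(Y/\cM_g))\bigr)=\tfrac{13}{12}\,\pi_*\bigl(c_1(T(Y/\cM_g))^2\bigr),
\]
as claimed; the degree-$0$ term gives the consistency check $\rank(\pi_! L)=\pi_*(\tfrac32 e)=\tfrac32\chi(\Sigma)=3-3g=h^0(\Sigma,T\Sigma)-h^1(\Sigma,T\Sigma)$ for $g\ge2$.

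The computation proper is just the two-line Todd expansion above; the coefficient $\tfrac{13}{12}=\bigl(\tfrac12+\tfrac12\bigr)+\tfrac1{12}$ is visibly the contribution of $\Ch_2(L)$ together with $\Ch_1(L)\Td_1(L)$, plus the $\Td_2(L)$ correction. The only genuinely substantive point is the first paragraph: invoking GRR/the family index theorem legitimately in this a priori non-algebraic, orbifold setting, and pinning down once and for all the conventions for $\pi_!$, the direction of fibre integration, and the normalisation of $e$. Granted those, there is nothing further to do.
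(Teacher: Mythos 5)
Your proof is correct and is essentially the paper's own argument: the paper states the lemma as a "special Grothendieck--Riemann--Roch calculation" and defers the details to \cite{MMS2}, Appendix C, Lemma 15, which is precisely the fibre integration of $\Ch(L)\Td(L)$ with $L=T(Y/\cM_g)$ serving both as the coefficient bundle and the relative tangent bundle, giving the degree-four coefficient $\tfrac12+\tfrac12+\tfrac1{12}=\tfrac{13}{12}$. Your handling of the orbifold/noncompactness issue and the rank consistency check match the conventions already set up in the paper, so nothing further is needed.
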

 
 
Observing that $c_1(T(Y/\cM_g) = e$ and 
$$
c_1( \pi_!(T(Y/\cM_g))  = c_1(\Lambda) = c_1(\det(\Lambda)),
$$
the lemma above shows that $c_1(\det(\Lambda)) = \frac{13}{12} e_1.$
Setting $\cL = \det(\Lambda)^{\otimes 12},$ we obtain, cf. \cite{MMS2} Appendix C, Corollary 2.

\begin{corollary}
In the notation above, $\cL$ is a line bundle over $\cM_g$
and one has the following identity:
$$
 c_1(\cL) 
 = 13 e_1.
 $$
 \end{corollary}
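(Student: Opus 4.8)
The plan is to obtain the Corollary as an essentially immediate consequence of the Grothendieck--Riemann--Roch Lemma stated just above it, combined with the three identifications recorded there.

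First I would record the easy structural point that $\cL$ is a line bundle: as already observed, the determinant $\det(\Lambda)$ of the virtual bundle $\Lambda=\pi_!(T(Y/\cM_g))\in K^0(\cM_g)$ is an honest complex line bundle over $\cM_g$, and any tensor power of a line bundle is again a line bundle, so $\cL=\det(\Lambda)^{\otimes 12}$ is a line bundle over $\cM_g$.

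Next I would run through the chain of equalities on first Chern classes. From the GRR Lemma, $c_1(\Lambda)=\frac{13}{12}\,\pi_*\big(c_1(T(Y/\cM_g))^2\big)$. Since the vertical tangent bundle $T(Y/\cM_g)$ is an oriented rank-$2$ bundle carrying the fibrewise complex structure, its first Chern class is the Euler class, $c_1(T(Y/\cM_g))=e$, so $\pi_*\big(c_1(T(Y/\cM_g))^2\big)=\pi_*(e\cup e)=e_1$ by the definition of the first Mumford--Morita--Miller class. Using that the first Chern class of a virtual bundle equals that of its determinant line, $c_1(\Lambda)=c_1(\det\Lambda)$, this gives $c_1(\det\Lambda)=\frac{13}{12}e_1$. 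Finally, multiplicativity of $c_1$ under tensor powers of a line bundle, $c_1(L^{\otimes 12})=12\,c_1(L)$, yields $c_1(\cL)=12\cdot\frac{13}{12}e_1=13\,e_1$, as claimed.

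The step that deserves a moment's care --- and the closest thing to an obstacle here --- is the interpretation of the intermediate identity $c_1(\det\Lambda)=\frac{13}{12}e_1$, which a priori carries a rational coefficient: one reads it in $H^2(\cM_g;\bbQ)$, which by Harer's theorem is one-dimensional and spanned by $e_1$, and the very reason for passing to the twelfth tensor power is to clear this denominator and arrive at the genuinely integral relation $c_1(\cL)=13\,e_1$ in $H^2(\cM_g;\bbZ)$. Beyond this, there is no substantive difficulty: all the analytic and GRR content is already packaged in the preceding Lemma, which we are entitled to assume, so the remainder is bookkeeping with Chern classes and Gysin pushforwards.
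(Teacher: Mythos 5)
Your proposal is correct and follows essentially the same route as the paper: the paper likewise combines the GRR Lemma with the observations $c_1(T(Y/\cM_g))=e$ and $c_1(\pi_!(T(Y/\cM_g)))=c_1(\Lambda)=c_1(\det(\Lambda))$ to get $c_1(\det(\Lambda))=\tfrac{13}{12}e_1$, and then takes the twelfth tensor power to obtain $c_1(\cL)=13e_1$. Your added remark about reading the intermediate rational identity in $H^2(\cM_g;\bbQ)$ and clearing the denominator is a reasonable clarification but not a departure from the paper's argument.
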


We next construct a canonical projective family of Dirac operators
on the Riemann surface $\Sigma$ with fixed choice of spin structure. This family is different to the one constructed in 
\cite{MMS2} Appendix C.
We enlarge the parametrizing space $\cM_g$ by taking the product with the Jacobian variety
${\rm Jac}(Y)$, which is the smooth variety of 
all unitary characters of the fundamental group $\pi_1(Y)$ of $Y$.

Construct a tautological line bundle $\cP$ over $Y \times {\rm Jac}(Y)$ as follows.
Consider the free action 
of $ \pi_1(Y)$,
\begin{align*}
 \pi_1(Y) \times \widetilde Y \times {\rm Jac}(Y) \times \bbC & \to \widetilde Y \times {\rm Jac}(Y) \times \bbC\\
(\gamma, (y, \chi, z)) & \to (y. \gamma, \chi, \chi(\gamma) z),
\end{align*}
where $\widetilde Y$ is the universal covering space of $Y$.
Then $\cP$ is defined to be the quotient space,
$$
\cP = \left( \widetilde Y \times {\rm Jac}(Y) \times \bbC\right)/ \pi_1(Y) 
$$ 

Consider now the fibre bundle $\pi \times {\rm Id}: Y \times  {\rm Jac}(Y) \rightarrow \cM_g \times {\rm Jac}(Y)$,
with typical fibre the Riemann surface $\Sigma$. The fibre bundle is endowed with the tautological 
line bundle $\cP \rightarrow Y \times {\rm Jac}(Y)$ over the total space of the fibre bundle. 
Applying the main construction in \cite{MMS2}, we get
a primitive line bundle $J\longrightarrow (Y \times  {\rm Jac}(Y))^{[2]}$. 
By the construction at the end of \cite{MMS2} \S5, we obtain
a projective family of Dirac operators $\eth_{\cP \otimes J}$ on the Riemann surface
$\Sigma$, parametrized by 
$\cM_g \times {\rm Jac}(Y)$, having analytic index,
$$
\Index_a(\eth_{\cP \otimes J}) \in K^0(\cM_g \times {\rm Jac}(Y);  e_1 \cup a),
$$
where $a \in \bfH^1({\rm Jac}(Y); \bbZ)$ and the right hand side denotes the twisted $K$-theory.

In the discussion above, we need to know that $\dim( {\rm Jac}(Y))>0$. We will establish this 
in the case when the genus $g\ge 1$. By the Leray-Serre spectral sequence for the 
fibre bundle $\Sigma \hookrightarrow Y \stackrel{\pi}{\rightarrow} \cM_g$, one has the 
5-term exact sequence of low degree homology groups, cf. Corollary 9.14 \cite{DK},
\begin{equation}\label{5-term}
\bfH_2(Y)  \stackrel{\pi_*}{\rightarrow} \bfH_2( \cM_g) \stackrel{\tau}{\rightarrow} \bfH_0(\cM_g, \bfH_1(\Sigma))
{\rightarrow} \bfH_1(Y) \stackrel{\pi_*}{\rightarrow} \bfH_1(\cM_g) {\rightarrow} 0
\end{equation}
where $\tau$ denotes the transgression map.

By  \cite{Mac}, one knows  that  $\cM_g$ is simply-connected,  therefore 
$$\bfH_0(\cM_g, \bfH_1(\Sigma)) \cong \bfH_1(\Sigma) \cong \bbZ^{2g},$$ and
by the Hurewicz theorem, 
$\bfH_1(\cM_g)=0$. By the universal coefficient theorem and by \cite{Morita},
$\bfH_2(\cM_g) \cong  \bfH^2(\cM_g, \bbZ) \cong \bbZ$. 
Therefore the 5-term exact sequence in equation \eqref{5-term} reduces to the 4-term exact sequence,
\begin{equation}\label{4-term}
\bfH_2(Y)  \stackrel{\pi_*}{\rightarrow} \bfH_2( \cM_g) \stackrel{\tau}{\rightarrow} \bfH_1(\Sigma)
{\rightarrow} \bfH_1(Y) \stackrel{\pi_*}{\rightarrow}   0
\end{equation}
Therefore, $\bfH_1(Y) \cong \bfH_1(\Sigma)/ {\rm Image}(\tau)$ has rank $\ge 2g-1>0$ (since 
the rank of ${\rm Image}(\tau)$ is $\le 1$,
therefore $\dim( {\rm Jac}(Y)) \ge 2g-1>0$, whenever $g\ge 1$.

We summarise the above as follows, also using the main result in \cite{MMS2}.

\begin{proposition}
Let $\Sigma$ be a Riemann surface of genus $g\ge 2$ and $Y\longrightarrow \cM_g$ be the
canonical family of hyperbolic metrics on $\Sigma$ of curvature equal to $-1$.
Consider the fibre bundle $\pi \times {\rm Id}: Y \times  {\rm Jac}(Y) \rightarrow \cM_g \times {\rm Jac}(Y)$,
with typical fibre the Riemann surface $\Sigma$. The fibre bundle is endowed with the tautological 
line bundle $\cP \rightarrow Y \times {\rm Jac}(Y)$ over the total space of the fibre bundle. 
Applying the main construction in \cite{MMS2}, we get
a primitive line bundle $J\longrightarrow (Y \times  {\rm Jac}(Y))^{[2]}$. 
By the construction at the end of \cite{MMS2} \S5, we obtain
a projective family of Dirac operators $\eth_{\cP \otimes J}$ on the Riemann surface
$\Sigma$, parametrized by 
$\cM_g \times {\rm Jac}(Y)$, having analytic index,
$$
\Index_a(\eth_{\cP \otimes J}) \in K^0(\cM_g \times {\rm Jac}(Y);  e_1 \cup a),
$$
where $a \in \bfH^1({\rm Jac}(Y); \bbZ)$ and the right hand side denotes the twisted $K$-theory.
\end{proposition}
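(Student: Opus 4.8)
The proposition packages together the constructions of \S\ref{decomp} with the general machinery relating a bundle gerbe, a projective family of pseudodifferential operators, and its index in twisted $K$-theory; the plan is to verify that all the hypotheses are met and to keep track of the cohomology classes. First note that the statement is non-vacuous: the Jacobian ${\rm Jac}(Y)=\Hom(\pi_1(Y),\UU(1))$ is a torus of dimension $b_1(Y)$, and the four-term exact sequence \eqref{4-term} established above gives $b_1(Y)\ge 2g-1>0$, so $H^1({\rm Jac}(Y);\bbZ)$ is a nonzero free abelian group and nonzero classes $a$ exist.

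Fix such a class $a$. I would feed into the construction of \S\ref{decomp}, over the base $X=\cM_g\times{\rm Jac}(Y)$: the class $\alpha=a\in H^1({\rm Jac}(Y);\bbZ)\hookrightarrow H^1(X;\bbZ)$, equivalently the $\bbZ$-covering of $X$ pulled back from ${\rm Jac}(Y)$; the class $\beta=e_1\in H^2(\cM_g;\bbZ)\hookrightarrow H^2(X;\bbZ)$, realized (after passing to a finite smooth cover of $\cM_g$) by a line bundle pulled back from $\cM_g$; the fibre bundle $\pi\times{\rm Id}\colon Y\times{\rm Jac}(Y)\to X$ with typical fibre $\Sigma$; and a trivialization $\gamma$ of the pull-back of that line bundle to $Y\times{\rm Jac}(Y)$, which exists because its Chern class is the pull-back of $\pi^*e_1\in H^2(Y;\bbZ)$ and $\pi^*e_1=0$ by the lemma above (Lemma~14 of \cite{MMS2}), $e_1$ being a characteristic class of $\pi\colon Y\to\cM_g$. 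The output is the primitive line bundle $J=J(\gamma)$ over $(Y\times{\rm Jac}(Y))^{[2]}$, hence a finite-dimensional bundle gerbe over $X$ whose Dixmier--Douady class, by the computation in \S\ref{decomp}, is the pairing $\langle\beta,\alpha\rangle=\langle e_1,a\rangle=e_1\cup a\in H^3(X;\bbZ)$.

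Now bring in the tautological (Poincar\'e) line bundle $\cP\to Y\times{\rm Jac}(Y)$ already constructed, and let $\bbE$ be the $\bbZ_2$-graded spinor bundle of $\Sigma$ for the fixed spin structure, twisted by $\cP$. Following the construction at the end of \cite{MMS2} \S5, the fibrewise spin Dirac operator twisted by $\cP$, assembled by means of $J$, is a projective elliptic family $\eth_{\cP\otimes J}$ over $X$, i.e.\ an invertible-symbol section of $\Psi^{\ZZ}_{J}(Y\times{\rm Jac}(Y)/X;\hom(\bbE))$. By projective index theory the analytic index of such a family associated to a bundle gerbe with Dixmier--Douady class $\delta$ lies in $K^0(X;\delta)$; since here $\delta=e_1\cup a$, we obtain $\Index_a(\eth_{\cP\otimes J})\in K^0(\cM_g\times{\rm Jac}(Y);e_1\cup a)$, as claimed.

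\textbf{Main obstacle.} The delicate step is the passage from the abstract data $(u,P,Y,\gamma)$ of \S\ref{decomp} to the concrete moduli/Jacobian geometry together with the exact bookkeeping of classes: one must produce the groupoid character $s\colon(Y\times{\rm Jac}(Y))^{[2]}\to\UU(1)$ explicitly from a chosen trivialization of the pulled-back bundle, verify that the $a$- and $e_1$-directions contribute precisely the two factors of $e_1\cup a$ in the Dixmier--Douady class with no correction terms, and confirm that the twist of the index equals this integral class and not merely a rational multiple of it. A secondary caveat, already flagged above, is that $\cM_g$ is a noncompact orbifold, so the discussion is to be read after passing to a finite smooth cover and to compact submanifolds.
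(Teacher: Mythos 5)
Your proposal is correct and is in substance the same route as the paper's: the Proposition appears there as a summary of the immediately preceding discussion, whose only step argued in detail is exactly your opening point, the positivity of $\dim({\rm Jac}(Y))$ extracted from the Leray--Serre 4-term sequence \eqref{4-term} (using simple connectivity of $\cM_g$ and Harer's computation), while the construction of $J$, the projective Dirac family, and the membership of the index in the twisted K-group are, as in your write-up, delegated to \cite{MMS2}. The one genuine difference is how the primitive line bundle $J$ is produced: the paper applies the ``main construction'' of \cite{MMS2} to the fibre bundle $\pi\times{\rm Id}$ endowed with the tautological bundle $\cP$, so the $a$-direction of the twist is carried by the character/covering structure already built into $\cP$, whereas you assemble the data of \S\ref{decomp} explicitly ($\alpha=a$ from the ${\rm Jac}(Y)$ factor, $\beta=e_1$ realized by a circle bundle pulled back from $\cM_g$, the trivialization $\gamma$ supplied by $\pi^*e_1=0$ from the earlier Lemma) and use $\cP$ purely as a coefficient bundle. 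Your packaging makes the bookkeeping of the Dixmier--Douady class $e_1\cup a$ more transparent; what it leaves open --- the precise interplay between $\cP$ and $J$ encoded in the notation $\eth_{\cP\otimes J}$, which you flag as the main obstacle --- is not made any more explicit in the paper either, so this is a difference of presentation rather than a gap. Your caveats about the noncompactness and orbifold singularities of $\cM_g$ likewise mirror the paper's own disclaimers.
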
 

\appendix
\section{Invertible Fourier integral operators}

Fourier integral operators, as introduced by H\"ormander in \cite{HoFIO},
see also \cite{Ho4}, are operators with Schwartz' kernels which are
Lagrangian distributions. The distributions associated to a conic
Lagrangian submanifold, $\Lambda \subset T^*M\setminus O,$ are defined
through local (really microlocal) parameterizations, they are then given by
oscillatory integrals over the fibres of the parameterizations. The case of
primary interest here, corresponds to $\Lambda =\operatorname{graph}'(\chi)\subset
(T^*Z\setminus O)^2$ being the twisted graph of a canonical,
i.e.\ homogeneous symplectic, diffeomorphism. In case $\chi$ is close to
the identity, in the \ci\ topology, it is possible to give a \emph{global}
parameterization (as for a pseudodifferential operator) which presents the
kernel in terms of a single oscillatory integral. The group of canonical
transformations, $\Can(Z)$ is naturally identified with the group of
contact transformations of $S^*Z$ and is a Fr\'echet Lie group, with the
same \ci\ topology as the full group of diffeomorphisms of $S^*Z.$ More
precisely $\Can(Z)$ is a Fr\'echet manifold modelled on $\CI(S^*Z),$
realized as the global space of smooth sections of the trivial bundle over
$S^*Z$ corresponding to functions homogeneous of degree $1$ on $T^*Z.$

\begin{proposition}\label{20.10.2012.1} The choice of a Riemann metric on
  $Z$ gives an identification of a neighbourhood of the identity in
  $\Can(Z)$ with a neighbourhood of $0$ in $\CI(S^*Z)$ under which the
  Lie algebra is mapped to $\CI(S^*Z)$ with the normalized Poisson bracket.
\end{proposition}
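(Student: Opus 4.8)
The plan is to realize the chart by the classical device of a generating function for the twisted graph, using the Riemannian metric to fix a global ``reference'' generating function for the identity. Recall first that, as in the appendix, $\Can(Z)$ is canonically the group $\Con(S^*Z)$ of contact transformations of the cosphere bundle; the metric realizes $S^*Z$ as the unit cosphere bundle, makes this identification concrete, and simultaneously equips $S^*Z$ with a contact form, namely the restriction of the tautological $1$-form.

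First I would translate a canonical transformation into a Lagrangian. For $\chi\in\Can(Z)$ the twisted graph $\operatorname{graph}'(\chi)$ is a conic Lagrangian submanifold of $T^*(Z\times Z)\setminus O$ with its standard symplectic structure, $\operatorname{graph}'(\operatorname{id})=N^*\Delta_Z$ is the conormal bundle of the diagonal $\Delta_Z\subset Z\times Z$, and $\chi$ is recovered from $\operatorname{graph}'(\chi)$; moreover $\chi$ is $\CI$-close to the identity precisely when $\operatorname{graph}'(\chi)$ is $\CI$-close to $N^*\Delta_Z$. The metric enters through the identification of a neighbourhood of $\Delta_Z$ in $Z\times Z$ with a neighbourhood of the zero section of $TZ$ via $(z,z')\mapsto(z,\exp_z^{-1}z')$: in the resulting fibred coordinates $(z,u)$, $u\in T_zZ$ small, one has $\Delta_Z=\{u=0\}$, the dual fibre variable to $u$ is a covector $\mu\in T_z^*Z$, the base covector is $\nu$, and $N^*\Delta_Z=\{u=0,\ \nu=0\}$, so that $N^*\Delta_Z\cong T^*Z$ with $\mu$ the conic variable and $S(N^*\Delta_Z)\cong S^*Z$.

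Then I would use the generating function. A conic Lagrangian $\Lambda$ that is $\CI$-close to $N^*\Delta_Z$ projects diffeomorphically onto it, hence is the graph of a unique generating function: there is a unique $\phi$, homogeneous of degree $1$ in $\mu$ on a conic neighbourhood, with $\Lambda=\{u=\partial_\mu\phi(z,\mu),\ \nu=-\partial_z\phi(z,\mu)\}$ in the above coordinates, and $\phi\equiv0$ exactly when $\Lambda=N^*\Delta_Z$, i.e.\ $\chi=\operatorname{id}$. Dividing by the metric length to trivialize the line bundle of degree-$1$ homogeneous functions exhibits $\phi$ as an element of $\CI(S^*Z)$. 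This yields a bijection $\phi\longleftrightarrow\chi_\phi$ between a neighbourhood of $0$ in $\CI(S^*Z)$ and a neighbourhood of the identity in $\Can(Z)$; the correspondence between a near-diagonal conic Lagrangian and its generating function, and hence $\phi\mapsto\chi_\phi$ and its inverse, is smooth by the implicit function theorem applied uniformly over $S^*Z$, and it is a chart for the Fr\'echet Lie group topology on $\Con(S^*Z)$ described by Omori, as in the construction of local sections in \cite{Adams, Omori, Michor}.

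Finally I would compute the differential at $\phi=0$. To first order in $\varepsilon$ the generating function $\varepsilon\phi$ generates the time-$\varepsilon$ flow of the Hamilton vector field of the degree-$1$ homogeneous function determined by $\phi$, so $d(\phi\mapsto\chi_\phi)_0$ is the canonical identification of $\CI(S^*Z)$ with $T_{\operatorname{id}}\Can(Z)$, the space of homogeneous-degree-$0$ Hamilton vector fields on $T^*Z\setminus O$. Since $[H_f,H_g]=H_{\{f,g\}}$ and the Poisson bracket of two functions homogeneous of degree $1$ is again homogeneous of degree $1$, the induced bracket on $\CI(S^*Z)$ is (up to sign) the normalized Poisson bracket, which is the assertion. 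The main obstacle is the verification in the third paragraph that the generating-function bijection is genuinely a chart for the inverse-limit-Hilbert Fr\'echet structure carried by $\Con(S^*Z)\cong\Can(Z)$ — that it, and its inverse, are smooth in that topology and not merely a set-theoretic bijection — together with the routine but necessary homogeneity bookkeeping that makes $\partial_\mu\phi$ and $\partial_z\phi$ into coordinate-independent data.
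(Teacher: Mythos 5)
Your proposal is correct and follows essentially the same route as the paper: both use the exponential-map collar of the diagonal to write the (twisted/reversed) graph of a near-identity canonical transformation as the graph of a unique generating function homogeneous of degree $1$, divide by $|\xi|_g$ to land in $\CI(S^*Z)$, and identify the Lie algebra with degree-$1$ homogeneous Hamiltonians, i.e.\ the Poisson bracket normalized by $|\xi|$. The only cosmetic difference is that the paper works fibrewise with the reversed graph and extracts $\phi(z,\xi)=v(\xi)\cdot\xi$ from the vanishing of the canonical $1$-form, and recovers the bracket by integrating $tH_{\tilde\phi}$ rather than linearizing at $\phi=0$.
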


\begin{proof} The exponential map corresponding to a choice of Riemann
  metric on $Z$ gives a normal fibration, a collar neighbourhood, of the
  diagonal in $Z^2$ in the form 
\begin{equation}
TZ\supset U\ni(z,v)\longrightarrow (\exp_z(v),z)\in U'\subset Z^2
\label{20.10.2012.2}\end{equation}
which is a diffeomorphism from $U,$ an open neighbourhood of the zero
section, to $U',$ an open neighbourhood of the diagonal in $Z^2.$ Under
this map the cotangent bundle is identified with the subset of the fibre
product $TZ\oplus T^*Z\oplus T^*Z$ projecting to $U$ in the first factor 
\begin{equation}
T^*Z\subset TZ\oplus T^*Z\oplus T^*Z,\ (\exp_z(v),z,\tau_z\xi,\eta)\longmapsto (z,v,\xi,\eta)
\label{20.10.2012.3}\end{equation}
where $\tau_z\xi\in T^*_{\exp_z(v)}Z$ is obtained by parallel transport
from $z$ along the geodesic defining $\exp_z(v).$

For $F\in\Con(Z)$ in a neighbourhood of the identity with respect to some
$\cC^2$ norm, the image of each fibre $T^*_zZ\setminus 0$ is necessarily a
conic Lagrangian submanifold $\Lambda _z\subset T^*Z\setminus O$ which is
close to $T^*Z.$ Consider the reversed graph $\Gamma(F)=\{(F(\sigma) ,\sigma
);\sigma \in T^*Z\}\subset T^*Z^2.$ Then in terms of \eqref{20.10.2012.3}
$\Gamma _z(F)=\Gamma(F)\cap T^*Z\times T^*_zZ$ projects diffeomorphically
onto the first factor of $T^*Z\setminus0$ and so may be written as the
range of a homogeneous smooth map $\Phi_z:T^*_zZ\setminus 0\longrightarrow
T_zZ,$ i.e. 
\begin{equation}
\pi(\Gamma _z(F))=\{(\Phi(\eta),\eta)\}.
\label{20.10.2012.4}\end{equation}
As a homogeneous Lagrangian submanifold both the symplectic form, which is
$dv\wedge d\xi$ and the canonical form $\xi\cdot dv$ vanish on
$\pi(\Gamma_z(F))$ for each $z.$ Thus if the $\xi$ are used as coordinates on
$\pi(\Gamma _z(F))$ so $v=v(\xi)$ then
\begin{equation}
\phi(z,\xi)=v(\xi)\cdot\xi\Longrightarrow d_\xi\phi(x,\xi)=v(\xi).
\label{20.10.2012.10}\end{equation}
Thus in fact $\Phi$ is given as the
gradient of a uniquely defined smooth function $\phi(z,\xi)$ which is
homogeneous of degree $1$ in $\xi$ and so projects to a smooth function
$\tilde\phi=\phi(z,\xi)/|\xi|_g\in\CI(S^*Z).$ This identifies a
neighbourhood of the identity in $\Can(Z)$ with a neighbourhood of $0$ in
$\CI(S^*Z)$  
\begin{equation}
\pi(\Gamma_z(F))=\{d_\xi(\phi(z,\xi);\xi\in T^*_zZ\}.
\label{20.10.2012.5}\end{equation}
Note that the collection of these conic Lagrangian submanifolds, of
$T^*Z\times Z$ as a bundle over the second factor of $Z,$ does determine
$F.$ Indeed, the reverse twisted graph of $F$ can be recovered from the
phase function defined over $U'$ as 
\begin{equation}
\psi(z,v,\eta)=v\cdot\eta-\phi(z,\eta)
\label{20.10.2012.6}\end{equation}
where the first term is the pairing between $T_zZ$ and $T^*zZ.$

Although this construction does depend on the normal fibration around the
diagonal, \eqref{20.10.2012.2}, changing this induces a bundle
transformation corresponding to the fact that the structure groupoid of
$\Can(Z)$ can be reduced to the fibre-preserving local diffeomorphisms of $TZ.$

Since $[0,1]\ni t\longmapsto t\phi\in\CI(S^*Z)$ connects $\phi$ to $0$ it
follows that any canonical transformation near the identity is given as the
parameter-dependent integral of the corresponding Hamilton vector fields
$tH_{\tilde\phi}$ and hence that the Lie algebra of $\Can(Z)$ may be
identified with the Poisson bracket projected to $\CI(S^*Z),$  
\begin{equation}
[\tilde\phi_1,\tilde\phi_2]=\{\phi_1,\phi_2\}/|\xi|.
\label{20.10.2012.7}\end{equation}
\end{proof}

This parameterization of canonical transformations near the identity gives
a section of the group of invertible Fourier integral operators -- essentially
as in \cite{Adams}.

\begin{proposition}\label{20.10.2012.8} Given the choice of a Riemann metric on
  a compact manifold $Z,$ of a cutoff $\chi\in\CIc(U'),$
  equal to $1$ near the diagonal and of a connection on $V$ to define parallel
  transport of the identity operator on $V,$ the oscillatory integral 
\begin{equation}
K(\phi)=(2\pi)^{-n}\int e^{i(v\cdot\xi-\phi(z,\xi))}\chi(v,z)\Id(v,z)d\xi dg_z,
\label{20.10.2012.9}\end{equation}
defines a section of the bundle of invertible Fourier integral operators
over the neighbourhood of the identity in $\Can(Z)$ corresponding to
$\phi|\xi|^{-1}\in\CI(S^*Z)$ near $0$ with respect to a sufficiently high
$\cC^k$ norm.
\end{proposition}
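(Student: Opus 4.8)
The plan is to read everything off from the phase function already produced in Proposition~\ref{20.10.2012.1}. The function $\psi(z,v,\eta)=v\cdot\eta-\phi(z,\eta)$ of \eqref{20.10.2012.6}, with $\phi(z,\eta)$ homogeneous of degree $1$ in $\eta$, is a nondegenerate phase function on a conic neighbourhood of $U'$: its critical set $\{\partial_\eta\psi=0\}=\{v=d_\eta\phi(z,\eta)\}$ parametrises the reversed twisted graph of $\chi_\phi$ by \eqref{20.10.2012.5}, and the mixed Hessian is invertible because $v$ enters $\psi$ linearly. Hence, by H\"ormander's theory \cite{HoFIO}, the oscillatory integral \eqref{20.10.2012.9} defines a Fourier integral operator of order $0$ associated to $\chi_\phi$, whose principal symbol is the parallel transport of $\Id_V$ -- a nowhere-vanishing section of $\pi^*\Aut(V)\otimes\cL_{\chi_\phi}$, the homomorphism bundle twisted by the Maslov line bundle -- so $K(\phi)$ is elliptic. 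I would also record that for $\phi=0$ the $\xi$-integration in \eqref{20.10.2012.9} produces a factor $\delta(v)$ and, since $\chi\equiv1$ near the diagonal, $K(0)=\Id$.

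The next step is invertibility for $\phi$ near $0$. Applying Proposition~\ref{20.10.2012.1} to $\chi_\phi^{-1}\in\Can_0(Z)$ -- which depends smoothly on $\phi$ since $\Can(Z)$ is a Fr\'echet Lie group -- gives a function $\phi^{\vee}\in\CI(S^*Z)$, depending smoothly on $\phi$ with $0^{\vee}=0$, parametrising $\chi_\phi^{-1}$. By composability of canonical graphs the products $A=K(\phi)\circ K(\phi^{\vee})$ and $B=K(\phi^{\vee})\circ K(\phi)$ lie in $\Psi^0(Z;V)$, depend continuously on $\phi$ for the topology of \S\ref{Sect:GFIO} (continuity of composition of Fourier integral operators, \cite{Omori}), and equal $\Id$ at $\phi=0$. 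Since the invertibles $G^0(Z;V)$ form an open subset of $\Psi^0(Z;V)$ (\S\ref{Sect:GFIO}), $A$ and $B$ are invertible for $\phi$ in a sufficiently high-$\cC^k$ neighbourhood of $0$; then $K(\phi^{\vee})A^{-1}$ is a right inverse and $B^{-1}K(\phi^{\vee})$ a left inverse of $K(\phi)$, so $K(\phi)\in\GL(\cF^0(Z;V))$.

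Finally, for the section statement I would verify that $\phi\mapsto K(\phi)$ is smooth into $\GL(\cF^0(Z;V))$ with the Fr\'echet topology of \S\ref{Sect:GFIO}: the phase $\psi$ depends affine-linearly on $\phi$, the amplitude $\chi\,\Id$ is fixed, and the topology on Fourier integral operators (\cite{Adams,Omori} and the projective-limit description of \S\ref{Sect:GFIO}) is exactly the one making such oscillatory-integral families smooth. Combined with the smooth dependence of $\chi_\phi$ on $\tilde\phi=\phi|\xi|^{-1}$ from Proposition~\ref{20.10.2012.1}, this exhibits $\tilde\phi\mapsto(\chi_\phi,K(\phi))$ as a smooth local section, near the identity, of the principal $G^0(Z;V)$-bundle $\GL_0(\cF^0(Z;V))\to\Can_0(Z)$ of \eqref{27.August.2012.47}.

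I expect this last point to be the main obstacle: one must quantify how many $\cC^k$-derivatives of $\phi$ control the Banach seminorms $\|\cdot\|_j$ built in \S\ref{Sect:GFIO}, both to see the smoothness of the family and to guarantee that $A$ and $B$ stay in $G^0(Z;V)$ over an honest, rather than merely formal, neighbourhood of the identity. This is the bookkeeping hidden behind the phrase ``sufficiently high $\cC^k$'' in the statement, and it is the place where genuine estimates, as opposed to the soft Fourier integral operator calculus, have to be invoked.
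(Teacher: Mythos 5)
Your proposal is correct and follows essentially the same route as the paper: quantize the phase $\psi(v,z,\xi)=v\cdot\xi-\phi(z,\xi)$ from Proposition~\ref{20.10.2012.1}, observe ellipticity for $\phi$ small, and reduce invertibility to the invertibility of an elliptic pseudodifferential composite which equals $\Id$ at $\phi=0$, using openness of $G^0(Z;V)$ in $\Psi^0(Z;V)$. The one real difference is the choice of partner operator: the paper takes the adjoint, $D(\phi)=K(\phi)^*K(\phi)$, which is automatically a Fourier integral operator for the inverse transformation, so there is no need to construct $\phi^{\vee}$ via Proposition~\ref{20.10.2012.1} applied to $\chi_\phi^{-1}$ or to track its smooth dependence on $\phi$; your version pays that small extra cost but in return gets two-sided invertibility explicitly from $A$ and $B$, whereas the paper's $K(\phi)^*K(\phi)$ on its own only yields a left inverse (the right inverse follows by the same argument for $K(\phi)K(\phi)^*$, which the paper leaves implicit). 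Finally, the analytic point you flag at the end --- controlling the composite by finitely many $\cC^k$-seminorms of $\phi$ so that it stays in $G^0(Z;V)$ over an honest neighbourhood --- is exactly what the paper disposes of with its ``direct application of the stationary phase lemma'' showing $\phi\mapsto D(\phi)$ is smooth from a neighbourhood of $0$ in $\CI(S^*Z)$ into the elliptic operators; so your assessment of where the genuine estimates live agrees with the paper's proof.
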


\begin{proof} The discussion above shows that the phase function 
\begin{equation}
\psi(v,z,\xi)=v\cdot\xi-\phi(z,\xi)\Mon TZ\oplus T^*Z
\label{20.10.2012.11}\end{equation}
as a bundle over $U$ parameterizes the twisted graph of the canonical
transformation in the sense introduced by H\"ormander
\cite{Ho4}. Thus \eqref{20.10.2012.9} is indeed a Fourier integral
operator associated, i.e.\ projecting to, the canonical transformation
used to define $\phi.$ For $\phi$ close to zero the graph of the
canonical transformation is close to the identity and \eqref{20.10.2012.9}
is therefore elliptic. The adjoints of the $K(\phi),$ with respect to some
fixed choice of inner product on $V$ and density on $Z,$ are necessarily
Fourier integral operators associated to the inverse transformations and
the product 
\begin{equation}
D(\phi)=K(\phi)^*K(\phi)
\label{PostArx.1}\end{equation}
is an elliptic pseudodifferential operator with kernel given explicitly by the
an oscillator integral. Application of the stationary phase lemma
shows that $D$ is a smooth map from a neighbourhood of zero in $\CI(S^*Z)$
into a neighbourhood of the identity in the elliptic
pseudodifferential operators. For a possibly smaller
neighbourhood it therefore lies in $G^0(Z;V).$ Thus $K(\phi)$ gives a section of
$\GL(\cF^0(Z;V))$ near the identity in $\Can(Z),$ as desired.
\end{proof}

If $E_0$ is the range of $K$ on a neighbourhood of $0$ in $\CI(S^*Z),$ and
hence a neigbhourhood $D_0$ of $\Id\in\Can_0(Z),$ then composing on the
left with invertible pseudodifferential operators, $G^0(Z;V)\cdot E_0$
certainly has the structure of a principal $G^0(Z;V)$-bundle over $D_0.$ To
extend this to the whole of $G_{\dag}(\cF^0(Z;V)),$ the part of the group
corresponding to the component of the identity $\Can_0(Z)\subset\Can(Z),$
it suffices to see that there is an element of $G_{\dag}(\CF^0(Z;V))$
corresponding to a given element $\chi\in\Can_0(Z).$ By assumption there is
a smooth curve $\chi_t$ in $\Can(Z)$ starting at the identity and with
end-point $\chi.$ It follows from smoothness that the Maslov bundle is
trivial, so there is no obstruction to the existence of an elliptic symbol
for such an operator. Indeed, again using the smooth homotopy a smooth
curve of elliptic operators, $F_t\in\cF^0(Z;V),$ starting at the identity
can be constructed covering the curve $\chi_t.$ This curve must have index
zero and hence the end-point, $F_1,$ with canonical transformation $\chi$
can be perturbed by the addition of a smoothing operator to be invertible,
i.e.\ to lie in $G_{\dag}(\cF^0(Z;V)).$ It follows that
$G_{\dag}(\cF^0(Z;V))$ is a principal bundle over $\Can_0(Z)$ with
structure group $G^0(Z;V).$

\section{Homotopy groups of (projective) Invertible Fourier Integral Operators}

In this appendix, we calculate the homotopy groups of the group of invertible pseudodifferential operators 
of order zero acting on a stable vector bundle over a Riemann surface $\Sigma_g$
of genus $g \ge 2$, as well as the homotopy groups of the group of invertible Fourier integral operators 
of order zero and its projectivization. As a consequence, we show that whereas there are no non-trivial fibre bundles over 
spheres $S^n,\, n\ge 2$ with typical fibre a Riemann surface $\Sigma_g$ of genus $g\ge 2$,
there are infinitely many topologically distinct  principal $ \PGL(\cF^0(\Sigma_g))$ bundles over $S^n,\, n\ge 2$, all of which 
are therefore purely infinite dimensional.

Let $\GL(\Psi^0(\Sigma_g; V))$ denote the group of invertible pseudodifferential operators 
of order zero acting on a vector bundle $V$ of rank $\ge 2$ over a Riemann surface $\Sigma_g$
of genus $g \ge 2$. 
Define the (stabilized) group $\GL(\Psi^0(\Sigma_g))$ of invertible pseudodifferential operators of order zero
as,
$$
\GL(\Psi^0(\Sigma_g)) = \lim_{n\to\infty} \GL(\Psi^0(\Sigma_g, \underline{\CC^n}))
$$
where $\underline{\CC^n}$ denotes the trivial bundle of rank $n$ on $\Sigma_g$.

Then by a result of Rochon \cite{Rochon}, the homotopy groups of $\GL(\Psi^0(\Sigma_g))$ are given by
$$
\pi_k(\GL(\Psi^0(\Sigma_g))) \cong \left\{\begin{array}{l} K^1(S^*\Sigma_g)/\ZZ \quad \text{if} \,\, k \,\, \text{is even}; \\[+7pt]
K^0(S^*\Sigma_g) \quad \text{if}\,\, k \,\,\text{is odd}.  \end{array}\right.   
$$
Since $S^*\Sigma_g \to \Sigma_g$ is a principal circle bundle with Euler characteristic $\chi(\Sigma_g) = 2-2g$,
we can use the Gysin sequence in $K$-theory to compute $K^\bullet(S^*\Sigma_g)$,
$$
\xymatrix{ K^0(\Sigma_g)  \ar[rr]^{\varepsilon^0} && K^0(\Sigma_g)  \ar[rr]^{\pi^*} && K^0(S^*\Sigma_g) \ar[d]^{\pi_!} & \\
   K^1(S^*\Sigma_g) \ar[u]^{\pi_!} && K^1(\Sigma_g) \ar[ll]^{\pi^*} && K^1(\Sigma_g) \ar[ll]^{\varepsilon^1} }\nonumber
$$
By a result of Emerson and Meyer \cite{EmersonMeyer}, $\varepsilon^0(x) = (2-2g) \rank(x) [{\bf 1}], \, x\in K^0(\Sigma_g)$ and 
$\varepsilon^0(y) =0, \, y\in K^1(\Sigma_g)$. Since $K^0(\Sigma_g) \cong \ZZ \oplus \ZZ$ and 
$K^1(\Sigma_g) \cong \ZZ^{2g}$, the Gysin sequence above breaks up into two short exact sequences,
\begin{align*}
 0\to \ZZ^{2g} \to & K^1(S^*\Sigma_g) \to \ZZ\to 0\\
 0\to  \ZZ \oplus \ZZ_{2g-2}\to & K^0(S^*\Sigma_g) \to \ZZ^{2g}\to 0
\end{align*}
Since $\ZZ$ and $\ZZ^{2g}$ are free abelian groups, these sequences split and we have,
$$
K^j(S^*\Sigma_g) \cong \left\{\begin{array}{l} \ZZ^{2g+1} \oplus \ZZ_{2g-2}  \quad \text{if} \,\, j=0,\\[+7pt]
 \ZZ^{2g+1}  \quad \text{if}\,\, j=1.  \end{array}   \right.
$$
We conclude that the homotopy groups of $\GL(\Psi^0(\Sigma_g))$ are,
$$
\pi_k(\GL(\Psi^0(\Sigma_g))) \cong \left\{\begin{array}{l} \ZZ^{2g} \quad \text{if} \,\, k \,\, \text{is even}; \\[+7pt]
 \ZZ^{2g+1} \oplus \ZZ_{2g-2}  \quad \text{if}\,\, k \,\,\text{is odd}. \end{array}  \right.
$$
\bigskip

Let $\GL(\cF^0(\Sigma_g; \underline{\CC^n}))$ denote the group of invertible Fourier integral operators 
of order zero acting on a trivial vector bundle of rank $n$ over a Riemann surface $\Sigma_g$
of genus $g \ge 2$, and $\Con_0(S^*\Sigma_g)$ denote the connected component of the group of 
contact diffeomorphisms of $S^*\Sigma_g$. Then by equation \eqref{27.August.2012.47}\, one has a fibration
$$
\GL(\Psi^0(\Sigma_g; \underline{\CC^n}))\longrightarrow  \GL(\cF^0(\Sigma_g; \underline{\CC^n})) \to \Con_0(S^*\Sigma_g).
$$
By taking the inductive limit $n\to \infty$, there is a fibration,
$$
\GL(\Psi^0(\Sigma_g))\longrightarrow  \GL(\cF^0(\Sigma_g)) \to \Con_0(S^*\Sigma_g),
$$
where 
$$
\GL(\cF^0(\Sigma_g)) =  \lim_{n\to\infty} \GL(\cF^0(\Sigma_g; \underline{\CC^n}))
$$
By a result of Banyaga \cite{Ban78} and Remarks \ref{Contact}, $\Con_0(S^*\Sigma_g)$ is homotopy equivalent to the circle $S^1$, so we conclude
that 
$$
\pi_k(\Con_0(S^*\Sigma_g)) \cong \left\{\begin{array}{l} \ZZ \quad \text{if} \,\, k=1; \\[+7pt]
0 \quad \text{if}\,\, k\ne 1.  \end{array} \right.  
$$
By the long exact sequence in homotopy and the calculations above, we see that 
$$
\pi_k( \GL(\cF^0(\Sigma_g))) = \pi_k(\GL(\Psi^0(\Sigma_g))) \quad \text{if}  \,\, k>1,
$$
and 
$$
0\to\pi_1( \GL(\Psi^0(\Sigma_g))) \longrightarrow  \pi_1(\GL(\cF^0(\Sigma_g))) \to \pi_1(\Con_0(S^*\Sigma_g))\to 0.
$$
That is, 
$$
0\to\ZZ^{2g+1} \oplus \ZZ_{2g-2}  \longrightarrow  \pi_1(\GL(\cF^0(\Sigma_g))) \to \ZZ\to 0.
$$
Now $\pi_1(\GL(\cF^0(\Sigma_g)))$ is an abelian group since the fundamental group of any topological group is abelian.
Also $\ZZ$ is a free abelian group, so the sequence above splits and we have
$$
 \pi_1(\GL(\cF^0(\Sigma_g))) \cong \ZZ^{2g+2} \oplus \ZZ_{2g-2}.
$$
To summarise, one has 
$$
 \pi_k(\GL(\cF^0(\Sigma_g)))   \cong \left\{\begin{array}{l} \ZZ^{2g} \quad \text{if} \,\, k>0 \,\, \text{is even}; \\[+7pt]
 \ZZ^{2g+1} \oplus \ZZ_{2g-2}  \quad \text{if}\,\, k>1 \,\,\text{is odd} \\[+7pt]
 \ZZ^{2g+2} \oplus \ZZ_{2g-2} \quad \text{if}\,\, k=1.
 \end{array}   \right.
$$
\bigskip

Consider the principal $\CC^*$-bundle 
\begin{equation}\label{eqn:circle}
\CC^* \to \GL(\cF^0(\Sigma_g)) \to \PGL(\cF^0(\Sigma_g)).
\end{equation}
Since $\pi_j(\CC^*)=0$ for $j\ne 1$ and $\pi_1(\CC^*)\cong \ZZ$, from the long exact sequence in homotopy, we see that 
$$
\pi_k(\PGL(\cF^0(\Sigma_g))) = \pi_k(\GL(\cF^0(\Sigma_g))) \quad \text{if} \,\, k\ge 3,
$$
and also 
$$
0\to \pi_2( \GL(\cF^0(\Sigma_g)) \to \pi_2(\PGL(\cF^0(\Sigma_g))) \stackrel{\partial}{\to} \ZZ  \to \pi_1( \GL(\cF^0(\Sigma_g))) \to 
\pi_1(\PGL(\cF^0(\Sigma_g))) \to 0.
$$
We conclude that 
$$
\ZZ^{2g}\cong \pi_2( \GL(\cF^0(\Sigma_g))) \hookrightarrow \pi_2(\PGL(\cF^0(\Sigma_g))) 
$$
and 
$$
 \ZZ^{2g+2} \oplus \ZZ_{2g-2} \cong \pi_1( \GL(\cF^0(\Sigma_g))) \twoheadrightarrow \pi_1(\PGL(\cF^0(\Sigma_g))).
$$
In fact, observe that since the kernel of the above surjective map has at most rank equal to 1, that 
$\pi_1(\PGL(\cF^0(\Sigma_g)))$ is infinite.
The boundary map $\partial$ is determined by the Euler class of the principal $\CC^*$-bundle  \eqref{eqn:circle}.

These results are used to prove the following,

\begin{proposition} Whereas there are no non-trivial fibre bundles over $S^n,\, n\ge 2$ with typical fibre $\Sigma_g, \, g\ge 2$,
there are infinitely many topologically distinct principal $ \GL(\cF^0(\Sigma_g))$ bundles over $S^n,\, n\ge 2$. Similarly 
there are infinitely many topologically distinct  principal $ \PGL(\cF^0(\Sigma_g))$ bundles over $S^n,\, n\ge 2$.
\end{proposition}

These principal $ \GL(\cF^0(\Sigma_g))$ and $ \PGL(\cF^0(\Sigma_g))$ bundles over $S^n,\, n\ge 2$ 
are all purely infinite dimensional, that is, they do not arise from any (finite dimensional)
fibre bundle over $S^n,\, n\ge 2$ with typical fibre $\Sigma_g, \, g\ge 2$.

\section{Factors of automorphy \& pseudodifferential algebra bundles} \label{secappCb}

In this section we study principal $\PGL(\cF^0(Z;V))$-bundles $\bfF$ and
the associated pseudodifferential algebra bundles $\bfPsi^\ZZ$ over a
compact manifold $X$ having contractile universal cover $\wX$ and relate
their construction to factors of automorphy. For example, any $X$ that is
a locally symmetric space $\Gamma\backslash G/K$ where $G$ is a reductive
Lie group with maximal compact subgroup $K$ and $\Gamma$ a uniform lattice
in $G$, has the desired property that the universal cover $\wX= G/K$ is
contractible.  We also express the Dixmier-Douady invariant of the
pseudodifferential algebra bundle $\bfPsi^\ZZ$ over $X$ in terms of the
factors of automorphy.

Let $\wF$ denote the lift of $\bfF$ to $\wX$.  Since $\wX$ is contractible,
it follows that $\wF$ is trivializable, i.e. $\wF\cong \wX \times
\PGL(\cF^0(Z;V))$.  Having fixed this isomorphism, we can define a
continuous map $j : \Gamma \times \wX \to \PGL(\cF^0(Z;V)) = {\rm
  Aut}(\Psi^\ZZ(Z;V))$ by the following commutative diagram,
\begin{equation}
\xymatrix{
                 \Psi^\ZZ(Z;V) = (\widetilde\bfPsi^\ZZ)_x  \ar[ddr]_{p} \ar[rr]^{j(\gamma, x)} && 
                 \Psi^\ZZ(Z;V) = (\widetilde\bfPsi^\ZZ)_{\gamma\cdot x}   \ar[ddl]^p                   \\  && \\ 
             &        (\bfPsi^\ZZ)_{p(x)} &      }
\end{equation}
Then 
\begin{equation}\label{eqn:auto}
j(\gamma_1\gamma_2, x) ^{-1}j(\gamma_1, \gamma_2 x) j(\gamma_2, x) = 1
\end{equation}
so $j$ is a factor of automorphy for the bundle $\bfPsi^\ZZ$. Conversely,
given a continuous map $j : \Gamma \times \wX \to \PGL(\cF^0(Z;V)) = {\rm
  Aut}(\Psi^\ZZ(Z;V))$ satisfying \eqref{eqn:auto}, we can define a
pseudodifferential algebra bundle,
\begin{equation}
\bfPsi^\ZZ_{j} = (\wX \times \Psi^\ZZ(Z;V))/\Gamma \to M
\end{equation}
where $\gamma \cdot (x, \xi) = (\gamma\cdot x, j(\gamma, x) \xi)$ for $\gamma \in \Gamma$ and 
$(x, \xi) \in \wX \times \Psi^\ZZ(Z;V)$. 

Given any two pseudodifferential algebra bundles, $\bfPsi^\ZZ_j, \bfPsi^\ZZ_{j'}$ over $X$
with factors of automorphy $j, j'$ respectively, and an isomorphism $\phi : \bfPsi^\ZZ_j \longrightarrow
\bfPsi^\ZZ_{j'}$, we get an induced isomorphism 
\begin{equation}
\widetilde \phi : \wX \times \Psi^\ZZ(Z;V) =  \widetilde\bfPsi^\ZZ_j 
\longrightarrow \widetilde\bfPsi^\ZZ_{j'} =  \wX \times \Psi^\ZZ(Z;V)
\end{equation}
given by $\widetilde\phi(x, \xi) = (x, u(x)\xi)$, 
where $u: \wX \to \PGL(\cF^0(Z;V))$ is continuous. 

Since $\widetilde\phi$ commutes with the action of $\Gamma,$ 
\begin{equation*}
\gamma\cdot  \widetilde\phi(x, \xi) = \widetilde\phi(\gamma(x, \xi))\text{
  and hence }
(\gamma\cdot x, j'(\gamma, x) u(x)\xi) = (\gamma\cdot x, u(\gamma\cdot x)j(\gamma, x)\xi).
\label{PostArx.3}\end{equation*}
Therefore 
\begin{equation}\label{eqn:autoequiv} 
 j'(\gamma, x) = u(\gamma\cdot x)j(\gamma, x) u(x)^{-1}
 \end{equation}
for all $x\in \wX$ and 
$\gamma \in \Gamma$. Conversely, two factors of automorphy $j, j'$ give rise to 
isomorphic algebra bundles $\bfPsi^\ZZ_j, \bfPsi^\ZZ_{j'}$ of compact operators if they
are related by \eqref{eqn:autoequiv} for some continuous function $u: \wX \to \PGL(\cF^0(Z;V))$.

Thus continuous sections of $\bfPsi^\ZZ$ can be viewed as continuous maps
$f\in C(\wX, \Psi^\ZZ(Z;V))$ satisfying the property, 
\begin{equation}\label{C14}
f(\gamma\cdot  x) = j(\gamma, x) f(x),\qquad \forall \gamma\in \Gamma, \, x\in \wX.
\end{equation}
For example, $f(x):=\sum_{\gamma \in \Gamma} j(\gamma, x)^{-1} F(\gamma \cdot x)$
converges uniformly on compact subsets of $\wX$ whenever $F: \wX \to \Psi^\ZZ(Z;V)$ is a compactly supported 
continuous function, and satisfies \eqref{C14}, therefore defining a continuous section of $\bfPsi^\ZZ$.

We next express the Dixmier-Douady invariant of the pseudodifferential algebra bundle
$\bfPsi^\ZZ$ over $X$ in terms of the factors of automorphy.
There is no obstruction to lifting the factor of automorphy $j : \Gamma \times \wX \to \PGL(\cF^0(Z;V)) = 
{\rm Aut}(\Psi^\ZZ(Z;V))$
to $\wh j: \Gamma \times \wX \to \GL(\cF^0(Z;V))$, because of our assumptions on $\wX$. However 
the cocycle condition \eqref{eqn:auto} has to be modified, 
\begin{equation}
\wh j(\gamma_1\gamma_2, x) ^{-1} \wh j(\gamma_1, \gamma_2 x) \wh j(\gamma_2, x) = 
\tau(\gamma_1, \gamma_2, x)\,,
\end{equation}
where $\tau : \Gamma \times \Gamma \times \wX \to \U(1)$.
There is no obstruction to lifting $\tau: \Gamma \times\Gamma \times\wX \to \U(1)$ 
to $\wh\tau :  \Gamma \times\Gamma \times\wX \to \RR$, however the cocycle condition
satified by $\tau$ has to be modified to $\delta\wh\tau(\gamma_1, \gamma_2, \gamma_3) = 
\eta(\gamma_1, \gamma_2, \gamma_3)$, where $\eta:  \Gamma \times\Gamma \times \Gamma
\to \ZZ$ is a $\ZZ$-valued 3-cocycle on $\Gamma$. One can show  $DD({\bfF}) = \delta([\tau']) = [\eta]$.
Thus, given a principal $\PGL(\cF^0(Z;V))$ bundle ${\bfF}$ on $X$, we have derived a cohomology 
class $ [\eta] \in \bfH^3(\Gamma, \ZZ) \cong \bfH^3(X, \ZZ)$ which is by standard arguments 
independent of the choices made. The relation with the previous discussion is that $[\eta] = [c]$. 

Notice that $\tau$ can be viewed as a 
continuous map $\tau': \Gamma\times\Gamma \to C(\wX, \U(1))$,
which is easily verified to be a $C(\wX, \U(1))$-valued 2-cocycle on $\Gamma$. 
Recall from standard group cohomology theory that equivalence 
classes of extensions of a group $\Gamma$ by an abelian 
group $C(\wX, \U(1))$ on which $\Gamma$ acts is in bijective correspondence
correspondence with the group cohomology with coefficients, 
$\bfH^2(\Gamma, C(\wX, \U(1)))$. 
We will first show that possible extensions $\wGamma$ of $\Gamma$ by $C(\wX, \U(1))$
are in bijective correspondence with elements of $\bfH^3(X, \ZZ)$ called the Dixmier-Douady 
invariant, and we will also compute $DD({\bfF}) \in \bfH^3(X, \ZZ)$ in our case. 
Now there is an exact sequence of abelian groups,
\begin{equation}
0 \to \ZZ \to C(\wX, \RR) \to C(\wX, \U(1)) \to 0 \,.
\end{equation}

This leads to a long exact sequence corresponding to change of coefficients,
\begin{equation}
\cdots \to \bfH^2(\Gamma,  C(\wX, \RR)) \to \bfH^2(\Gamma, C(\wX, \U(1))) \stackrel{\delta}{\to} 
\bfH^3(\Gamma, \ZZ) \to 
\bfH^3(\Gamma,  C(\wX, \RR))  \to \cdot
\end{equation}
Since $\Gamma$ acts freely on $\wX$ and $C(\wX, \RR)$ is an induced module,
it follows that $ H^j(\Gamma, C(\wX, \RR)) = 0$ for all $j>0$.  Therefore
$H^j(\Gamma, C(\wX, \U(1))) \cong H^{j+1}(\Gamma, \ZZ) = H^{j+1}(X, \ZZ)$
for all $j>0$, and in particular for $j=2$ as claimed.  In particular,
since $[\tau'] \in \bfH^2(\Gamma, C(\wX, \U(1)))$, we see that $DD({\bfF}) =
\delta([\tau'])=[\eta] \in \bfH^3(\Gamma, \ZZ) = \bfH^3(X, \ZZ)$ is the
Dixmier-Douady invariant of ${\bfF}$.

\end{document}